\newcommand{\si}{\sigma}
\newcommand{\la}{\lambda}
\newcommand{\ol}{\overline}
\newcommand{\pa}{\partial}
\newcommand{\La}{\Lambda}
\newcommand{\al}{\alpha}
\newcommand{\be}{\beta}
\newcommand{\de}{\delta}
\newcommand{\ga}{\gamma}
\newcommand{\ka}{\kappa}
\newcommand{\om}{\omega}
\newcommand{\ve}{\varepsilon}
\newcommand{\cd}{\cdot}
\newcommand{\R}{{\mathbb R}}
\newcommand{\N}{{\mathbb N}}
\newcommand{\cE}{{\cal E}}
\newcommand{\cB}{{\cal B}}
\newcommand{\cM}{{\cal M}}
\newcommand{\cN}{{\cal N}}
\newcommand{\cH}{{\cal H}}
\newcommand{\tM}{\widetilde{M}}
\newcommand{\tcM}{\widetilde{\cM}}
\newcommand{\tN}{\widetilde{N}}
\newcommand{\tcN}{\widetilde{\cN}}
\newcommand{\mM}{{\mathfrak M}}
\renewcommand{\(}{\left(}
\renewcommand{\)}{\right)}
\newcommand{\Th}{\Theta}
\renewcommand{\i}{{\rm i}}
\renewcommand{\th}{\theta}
\newtheorem{Thm}{\indent\bf Theorem}[section]
\newtheorem{Lemm}{\indent\bf Lemma}[section]
\newtheorem{Prop}{\indent\bf Proposition}[section]
\theoremstyle{remark}
  \newtheorem{Rem}{\indent\sc Remark}[section]
\theoremstyle{definition}
  \newtheorem{Def}{\indent Definition}[section]
  \newtheorem{Ex}{\indent\sc Example}[section]
\numberwithin{equation}{section}
\begin{document}
\title{A class of non-analytic functions for the global solvability of Kirchhoff equation}
\author{Fumihiko Hirosawa
\\
{\small
Department of Mathematical Sciences, Yamaguchi University, 
Yamaguchi 753-8512, Japan}
\\
{\small E-mail: hirosawa@yamaguchi-u.ac.jp}}
\date{}

\maketitle

\begin{abstract}
We consider the global solvability to the Cauchy problem of Kirchhoff equation with generalized classes of Manfrin's class. 
Manfrin's class is a subclass of Sobolev space, but we shall extend this class as a subclass of the ultradifferentiable class, and we succeed to prove the global solvability of Kirchhoff equation with large data in wider classes from the previous works. 
\end{abstract}

\begin{center}
{\it MSC 2000}: 35L70, 35L15.
\end{center}

\begin{center}
{\it Keywords}: Kirchhoff equation; global solvability; ultradifferentiable class; Manfrin's class.
\end{center}

\section{Introduction}
We consider the global solvability to the Cauchy problem of Kirchhoff equation:
\begin{equation}\label{K}
\begin{cases}
\displaystyle{
  \pa_t^2 u(t,x)-\(1+\|\nabla u(t,\cd)\|^2\)\Delta u(t,x)=0,
  \quad
  (t,x)\in(0,\infty)\times \R^n,}
\\
  u(0,x)=u_0(x),
  \quad
  (\pa_t u)(0,x)=u_1(x),
  \quad
  x\in \R^n,
\end{cases}
\end{equation}
where 
$\nabla=(\pa_{x_1},\cdots,\pa_{x_n})$, 
$\Delta=\sum_{k=1}^n\pa_{x_k}^2$ and 
$\|\cd\|$ denotes the usual $L^2(\R^n)$ norm. 
The original Kirchhoff equation was introduced by Kirchhoff \cite{K} to describe the transversal oscillation of a stretched string, which corresponds to the equation of (\ref{K}) with $n=1$ and $\|\nabla u(t,\cd)\|^2$ is replaced to $\int_I |\pa_x u(t,x)|^2\,dx$ with a finite interval $I$ of $\R$. 
The global solvability in realanalytic class and the local solvability in appropriate Sobolev spaces were proved in Bernstein \cite{B}, and the global solvability with small data was proved in Greenberg-Hu \cite{GH}. 
After that, many people considered the global solvability of the Kirchhoff equations, for instance \cite{AS}, \cite{DS}, \cite{Me}, \cite{Po}, \cite{Yg}, \cite{Yz}, etc; see also \cite{S} more details of Kirchhoff equation. 
However, the most basic problems; the global solvability for non-realanalytic initial data and the non-existence of the global solution have not solved yet. 
Here we note that the difference between the Cauchy problem and mixed problem are not essential for the proof of the global solvability, thus we did not distinguish them for the introduction of the previous works. 
The class of non-analytic data for the global solvability was studied from two different points of view: 
the first one is a quasianalytic class ${\cal Q}_{L^2}$ by Nishihara \cite{Ni} (see also \cite{GG,H98}), and the second one is Manfrin's class introduce in \cite{M02, M05} and \cite{H06}; 
both classes will be introduced below. 
Briefly, we consider the global solvability of (\ref{K}) in the classes, 
which are extensions of Manfrin's class.

Let $u(t,x)$ be a solution to (\ref{K}). 
We define $\cE(t,\xi)$ by 
\begin{equation*}
  \cE(t,\xi):=\frac12\(|\xi|^2|\hat{u}(t,\xi)|^2+|\pa_t \hat{u}(t,\xi)|^2\), 
\end{equation*}
where $\hat{u}$ denotes the partial Fourier transformation of $u(t,x)$ with respect to $x\in \R^n$. 
In particular, $\cE(0,\xi)=\cE(0,\xi;u_0,u_1)$ denotes 
\begin{equation*}
  \cE(0,\xi;u_0,u_1):
  =\frac12\(|\xi|^2|\hat{u}_0(\xi)|^2+|\hat{u}_1(\xi)|^2\).  
\end{equation*}
Moreover, li
for an integer $m$ satisfying $m\ge2$, real numbers $\rho,\eta$ satisfying $\rho\ge1$, $\eta>0$ we define 
$G_m(\rho,\eta)=G_m(\rho,\eta;u_0,u_1)$ by 
\begin{equation*}
  G_m(\rho,\eta;u_0,u_1):=\int_{|\xi|\ge \rho}
  \(\frac{|\xi|}{\rho}\)^m \exp\(\frac{\eta|\xi|}{\(\frac{|\xi|}{\rho}\)^{m}}\)
  \cE(0,\xi;u_0,u_1)\,d\xi. 
\end{equation*}
Let ${\cal L}$ be the set of all strictly increasing sequence of positive real numbers $\{\rho_j\}_{j=1}^\infty$ satisfying $\rho_1\ge 1$ and $\lim_{j\to\infty}\rho_j=\infty$. 
Then Manfrin's class is defined as follows: 
\begin{Def}[Manfrin's class]
For $m\ge 2$ we define Manfrin's class $B_\Delta^{(m)}$ by
\begin{equation*}
  B^{(m)}_\Delta:=\bigcup_{\eta>0}\left\{(u_0,u_1)\in H^{\frac{m}{2}+1}\times H^{\frac{m}{2}}\:;\:
  \exists\{\rho_j\}\in {\cal L},\;
  \sup_j \left\{ G_m(\rho_j,\eta;u_0,u_1)\right\}<\infty
  \right\}, 
\end{equation*}
where $H^j$ denote the usual Sobolev class of order $j$. 
\end{Def}
Then we have the following theorem for the global solvability of (\ref{K}): 
%
\begin{Thm}[\cite{H06,M02,M05}]\label{Thm0}
Let $m\ge 2$. 
If $(u_0,u_1) \in B^{(m)}_\Delta$, 
then (\ref{K}) has a unique strong solution satisfying 
$(u(t,x),u_t(t,x)) \in H^{\frac{m}{2}+1}\times H^{\frac{m}{2}}$ 
for any $t\in (0,\infty)$. 
Here we say that $u(t,x)$ is a strong solution if 
$u(t,x) \in \bigcap^2_{k=0} C^k([0,T);H^{2-k})$. 
\end{Thm}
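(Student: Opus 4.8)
The plan is the now-standard route for Kirchhoff problems: a local-existence/continuation argument reduces everything to an a priori bound, which is obtained on the Fourier side for the family of oscillators $v_{tt}+c(t)^{2}|\xi|^{2}v=0$ with $c(t)^{2}=1+\|\nabla u(t)\|^{2}$, by an energy estimate tuned to the weight of $B^{(m)}_{\Delta}$.

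\textbf{Reduction, speed bound, and the loss of a derivative.} First I would invoke the classical local well-posedness of \eqref{K} in $H^{2}\times H^{1}$ (which also propagates the higher regularity $H^{m/2+1}\times H^{m/2}$): there is a maximal $T^{*}\in(0,\infty]$ and a unique strong solution on $[0,T^{*})$ with $(u,u_{t})\in C([0,T^{*});H^{m/2+1}\times H^{m/2})$, and $T^{*}<\infty$ forces $\limsup_{t\uparrow T^{*}}\big(\|u(t)\|_{H^{m/2+1}}+\|u_{t}(t)\|_{H^{m/2}}\big)=\infty$. So it suffices to show that $\cF(t):=\int_{\R^{n}}\langle\xi\rangle^{m}\cE(t,\xi)\,d\xi$ is finite on each $[0,t]$ with $t<T^{*}$ — which is also exactly the asserted regularity at time $t$. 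Next, multiplying \eqref{K} by $u_{t}$ and integrating shows that
\[
  E_{0}:=\tfrac12\|u_{t}(t)\|^{2}+\tfrac12\|\nabla u(t)\|^{2}+\tfrac14\|\nabla u(t)\|^{4}
\]
is conserved, so $\|\nabla u(t)\|^{2}\le\Lambda_{0}$ and $\int_{\R^{n}}\cE(t,\xi)\,d\xi\le E_{1}$ on $[0,T^{*})$ for constants depending only on $(u_{0},u_{1})$; in particular $1\le c(t)^{2}\le1+\Lambda_{0}$ uniformly in $t$. On the Fourier side $v(t,\xi):=\hat u(t,\xi)$ solves the oscillator equation, and $\partial_{t}\cE(t,\xi)=-\|\nabla u(t)\|^{2}|\xi|^{2}\operatorname{Re}(v\,\overline{v_{t}})$ gives $|\partial_{t}\cE(t,\xi)|\le2\Lambda_{0}|\xi|\,\cE(t,\xi)$, i.e.\ $\cE(t,\xi)\le\cE(0,\xi)e^{2\Lambda_{0}|\xi|t}$ — the full power of $|\xi|$ being the classical loss of one derivative, which is why no finite amount of Sobolev regularity alone is known to suffice. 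Differentiating $c^{2}=1+\|\nabla u\|^{2}$ and eliminating $v_{tt}$ through the equation, $\partial_{t}^{j}(c^{2})$ becomes a polynomial expression in $c^{2},\dots,(c^{2})^{(j-2)}$ with coefficients bounded by $\int_{\R^{n}}\langle\xi\rangle^{j}\cE(t,\xi)\,d\xi$; hence on any interval with $\cF\le K$ one gets $\sup_{j\le m}|\partial_{t}^{j}c(t)|\le P(K)$ for a fixed polynomial $P$ (depending on $\Lambda_{0},m$), the top-order derivative being exactly the one that costs $u\in H^{m/2+1}$, $u_{t}\in H^{m/2}$.

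\textbf{The refined energy estimate (the heart).} For the linear ODE with $c\in C^{m}$, $1\le c\le\sqrt{1+\Lambda_{0}}$ and $\sup_{j\le m}|\partial_{t}^{j}c|\le A$ on $[0,t]$, I would run $m-1$ steps of the perfect-diagonalization (WKB) procedure on the first-order $2\times2$ system for $\bigl(v_{t}\pm i c|\xi|v\bigr)$: each step conjugates by a matrix $I+O(A/|\xi|)$ and trades one $t$-derivative of $c$ for a factor $|\xi|^{-1}$ in the off-diagonal remainder, so that for $|\xi|\ge R(A)$ the surviving diagonal energy is comparable to $\cE(t,\xi)$ and a Gronwall estimate yields
\[
  \cE(t,\xi)\;\le\;C_{m}\,\cE(0,\xi)\,\exp\!\Big(\frac{C_{m}A^{m}\,t}{|\xi|^{\,m-1}}\Big),\qquad|\xi|\ge R(A).
\]
The decisive observation is that $|\xi|^{1-m}=\rho^{-m}\,|\xi|\,(|\xi|/\rho)^{-m}$ has \emph{exactly} the profile of the exponent in the weight defining $G_{m}$. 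Thus if the scale $\rho_{\ell}$ from the sequence furnished by $(u_{0},u_{1})\in B^{(m)}_{\Delta}$ is taken so large that $\rho_{\ell}\ge R(A)$ and $C_{m}A^{m}t\le\tfrac{\eta}{2}\rho_{\ell}^{\,m}$, then multiplying the estimate above by $(|\xi|/\rho_{\ell})^{m}\exp\!\big(\tfrac{\eta}{2}|\xi|(|\xi|/\rho_{\ell})^{-m}\big)$ and integrating over $|\xi|\ge\rho_{\ell}$ absorbs the factor $e^{C_{m}A^{m}t|\xi|^{1-m}}$ into the passage from weight-parameter $\eta$ to $\eta/2$, giving
\[
  \int_{|\xi|\ge\rho_{\ell}}\Big(\tfrac{|\xi|}{\rho_{\ell}}\Big)^{m}\exp\!\Big(\tfrac{\eta}{2}\,|\xi|\,\bigl(|\xi|/\rho_{\ell}\bigr)^{-m}\Big)\cE(t,\xi)\,d\xi\;\le\;C\,G_{m}(\rho_{\ell},\eta;u_{0},u_{1}),
\]
which is $\le C\sup_{j}G_{m}(\rho_{j},\eta;u_{0},u_{1})<\infty$ by hypothesis.

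\textbf{Closing the loop, patching, and the main obstacle.} Fix $T<T^{*}$. Using the conserved-energy bounds (which already pin down $\|\nabla u\|$, $\|u_{t}\|$, $c$ and $c'$ with no bootstrap), I would run a continuity argument on $[0,T]$: while $\cF\le K$, the estimate just proved (with $A=P(K)$ and $\rho_{\ell}$ chosen accordingly) keeps the $\rho_{\ell}$-weighted energy $\le CM$ and so controls $\int_{|\xi|\ge\rho_{\ell}}\langle\xi\rangle^{m}\cE(t,\xi)\,d\xi$, while for $|\xi|<\rho_{\ell}$ one falls back on the diagonalization estimate where it is valid and on the crude bound above (together with $\int\cE(t,\xi)\,d\xi\le E_{1}$) below it; one concludes that $\cF$ cannot blow up on $[0,T]$, so $T^{*}>T$, and $T$ being arbitrary, $T^{*}=\infty$. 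The same estimate at any $t>0$ gives $(u(t),u_{t}(t))\in H^{m/2+1}\times H^{m/2}$, and since each $\rho_{j}$ buys a time-horizon that grows with $\rho_{j}\to\infty$, the estimates patch over all of $[0,\infty)$. The one genuinely delicate point — and the real content of the theorem — is making the $(m-1)$-fold diagonalization quantitative with the precise exponent $|\xi|^{1-m}$ so that it matches Manfrin's weight, and then verifying that the circular dependence (the coefficient's derivative bounds feed the energy estimate, which in turn controls the very norm that bounds those derivatives) really closes on intervals whose lengths tend to $\infty$ along $\{\rho_{j}\}$; that $m-1$ steps, the number one can afford with regularity $H^{m/2+1}\times H^{m/2}$, is exactly what is needed to turn the scale family and the weight of $B^{(m)}_{\Delta}$ into a usable a priori bound. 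Everything else is bookkeeping.
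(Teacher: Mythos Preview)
The paper does not give its own proof of this statement: Theorem~\ref{Thm0} is quoted from \cite{M02,M05,H06} (see Remark~1.1 immediately after it), so there is no in-paper argument to compare against line by line. That said, the machinery the paper develops in Sections~3--4 to prove the main Theorem~2.1 is precisely the infinite-order generalization of the argument you outline, and your proposal matches the route of the cited works.

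Your sketch is correct in all essentials: local existence plus a continuation criterion; the conserved Hamiltonian gives $1\le c(t)^2\le 1+2E_0(0)$; the $(m{-}1)$-step refined diagonalization on the $2\times2$ system yields exactly
\[
  \cE(t,\xi)\le C\,\cE(0,\xi)\exp\!\big(C A^{m}\,t\,|\xi|^{-(m-1)}\big)\quad(|\xi|\ge R(A)),
\]
which is the finite-$m$ analogue of Proposition~3.1 here; and the exponent $|\xi|^{1-m}=\rho^{-m}|\xi|(|\xi|/\rho)^{-m}$ lines up with the weight in $G_m(\rho,\eta)$ so that a large enough $\rho_\ell$ from the admissible sequence absorbs the exponential. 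The only point where your write-up differs from the paper's (and the cited works') closing argument is the way the circularity is broken. You run a continuity argument with a floating threshold $K$ for $\cF(t)$ and let $A=P(K)$; the paper instead fixes the threshold a priori as (a constant times) $E_0(0)$, shows via a contradiction argument that the $\rho_{j_1}$-weighted energy cannot reach that level on $[T_0,T)$ (see \eqref{maxT1=E0} and the lines after), and thereby avoids having $\rho_\ell$ depend on an undetermined $K$. Both close, but the paper's version is cleaner because the derivative bound $|\Phi^{(k)}|\le \nu_0^k k!\,\cdots$ then depends only on initial-data quantities, not on a bootstrap parameter. If you tighten your continuity step in that direction (fix the threshold once, from the data), your proof is complete.
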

\begin{Rem}
Theorem \ref{Thm0} was proved for $m=2$, $m=3$, and $m\ge 4$ in \cite{M02}, \cite{M05}, and \cite{H06} respectively. 
\end{Rem}

For Manfrin's class $B^{(m)}_\Delta$ we have the following properties: 
\begin{Lemm}[\cite{H06,M02,M05}]\label{B_Delta^m-prop}
Let $\cH(\{k!\})$, and ${\cal Q}_{L^2}$ be the realanalytic class, and the quasianalytic class respectively, 
which will be given in Definition \ref{udclass}, Lemma \ref{lemm-uldclass} (iii) and (iv). 
For any $m\ge 2$, the following properties are established$:$ 
\begin{itemize}
\item[(i)]
There exists $(u_0,u_1)\in B^{(m)}_\Delta$ such that 
$u_1\not\in \cH(\{k!\})$. 
\item[(ii)]
There exists $(u_0,u_1)\in B^{(m)}_\Delta$ such that $u_1\not\in {\cal Q}_{L^2}$.  
On the other hand, there exists $u_1 \in {\cal Q}_{L^2}$ such that 
$(0,u_1)\not\in B^{(m)}_\Delta$. 
\item[(iii)]
For any $\ve>0$, there exists $(u_0,u_1)\in B^{(m)}_\Delta$ such that 
$(u_0,u_1)\not\in H^{\frac{m}{2}+1+\ve}\times H^{\frac{m}{2}+\ve}$.  
\item[(iv)]
There exists $(u_0,u_1)\in B^{(m+1)}_\Delta$ such that $(u_0,u_1)\not\in B^{(m)}_\Delta$.  
\end{itemize} 
\end{Lemm}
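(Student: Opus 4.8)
The plan is to produce every required example with $u_0\equiv 0$ and with $\hat u_1$ a radial function carried by a union of dyadic annuli $A_k:=\{\xi:s_k\le|\xi|\le 2s_k\}$, where $1<s_1<s_2<\cdots\to\infty$ grows at a rate fixed below, and on each $A_k$ we prescribe $|\hat u_1(\xi)|^2=|\xi|^{-2N-n}(\log|\xi|)^{-2}$, so that $\cE(0,\xi;0,u_1)\asymp|\xi|^{-2N-n}(\log|\xi|)^{-2}$ on $A_k$ and $\cE(0,\xi;0,u_1)=0$ off $\bigcup_k A_k$. Elementary annular integrals give $\|u_1\|_{H^N}^2\asymp\sum_k(\log s_k)^{-2}$ and $\|u_1\|_{H^{N+\ve}}^2\asymp\sum_k s_k^{2\ve}(\log s_k)^{-2}$; taking $s_k=A^{\ga^k}$ with $A>1$, $\ga>1$ makes the first sum converge and the second diverge for every $\ve>0$, and since $\hat u_1$ decays only polynomially along the annuli, $\int e^{2\de|\xi|}\cE(0,\xi)\,d\xi=\infty$ for every $\de>0$, and likewise the Fourier-side characterization of ${\cal Q}_{L^2}$ fails. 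Thus with $N=\frac m2$ this $u_1$ lies in $H^{m/2}\setminus H^{m/2+\ve}$, $u_1\notin\cH(\{k!\})$ and $u_1\notin{\cal Q}_{L^2}$; it remains to place $(0,u_1)$ in $B^{(m)}_\Delta$, which simultaneously settles (i), the first assertion of (ii), and (iii).

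For that I will analyse the weight $\om_\rho(\xi):=\(|\xi|/\rho\)^m\exp\(\eta\rho^m|\xi|^{1-m}\)$ occurring in $G_m(\rho,\eta)$: its exponent $\eta\rho^m|\xi|^{1-m}$ is $\ge\eta$ precisely on the ``forbidden band'' $\rho\le|\xi|\le\rho^{m/(m-1)}$ and tends to $0$ once $|\xi|\gg\rho^{m/(m-1)}$, where $\om_\rho$ degenerates to the polynomial factor $(|\xi|/\rho)^m$. Hence, choosing $\ga>m/(m-1)$ (so that for large $j$ the gap $\(2s_j,s_{j+1}\)$ contains points $\le s_{j+1}^{(m-1)/m}$) and picking $\rho_j\in\(2s_j,s_{j+1}\)$ with $\rho_j\le s_{j+1}^{(m-1)/m}$, one has on every annulus $A_k\subset\{|\xi|\ge\rho_j\}$ (necessarily $k\ge j+1$, so $|\xi|\ge s_{j+1}$) the bound $\eta\rho_j^m|\xi|^{1-m}\le\eta$, and therefore
\[
  G_m(\rho_j,\eta)\ \lesssim\ \rho_j^{-m}\sum_{k\ge j+1}(\log s_k)^{-2}\ \le\ \rho_1^{-m}\sum_k(\log s_k)^{-2}\ <\ \infty
\]
uniformly in $j$; after reindexing $\{\rho_j\}$ this exhibits $(0,u_1)\in B^{(m)}_\Delta$. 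For the remaining assertion of (ii) take instead $\cE(0,\xi;0,u_1)\asymp\exp\(-2|\xi|/\log|\xi|\)$, which lies in $H^{m/2}$ and in ${\cal Q}_{L^2}$ because $r/\log r$ meets the Denjoy--Carleman divergence condition, yet for any $\rho\ge 2$ one has $G_m(\rho,\eta)\ge\int_{\rho\le|\xi|\le2\rho}\om_\rho(\xi)\,\cE(0,\xi)\,d\xi\gtrsim\rho^{\,n}\exp\(\eta\,2^{1-m}\rho-4\rho/\log\rho\)\to\infty$ as $\rho\to\infty$, so no $\{\rho_j\}\in{\cal L}$ keeps $G_m(\cdot,\eta)$ bounded and $(0,u_1)\notin B^{(m)}_\Delta$.

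Part (iv) is the delicate one. I will reuse the annular construction with $N=\frac{m+1}2$ but now choose $\ga\in\big((m+1)/m,\;m/(m-1)\big)$, an interval that is nonempty for every $m\ge2$ since $(m+1)(m-1)<m^2$. The inequality $\ga>(m+1)/m$ gives $2s_j<s_{j+1}^{m/(m+1)}$ for large $j$, and running the mechanism of the previous paragraph at level $m+1$ with $\rho_j:=s_{j+1}^{m/(m+1)}$ (so that $\eta\rho_j^{m+1}|\xi|^{-m}\le\eta$ on every annulus contained in $\{|\xi|\ge\rho_j\}$) yields $G_{m+1}(\rho_j,\eta)\lesssim s_{j+1}^{-m}\sum_k(\log s_k)^{-2}\to 0$, whence $(0,u_1)\in B^{(m+1)}_\Delta$. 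On the other hand $\ga<m/(m-1)$ forces, for every large $\rho$, the first annulus radius $s_\ell\ge\rho$ to satisfy $s_\ell<\rho^{\ga}$; fixing $a_0$ with $\ga<a_0<m/(m-1)$, the whole annulus $A_\ell$ then lies in $\{\rho\le|\xi|\le\rho^{a_0}\}$, on which $\om_\rho(\xi)\ge\exp\(\eta\rho^{\,m-a_0(m-1)}\)$ with $m-a_0(m-1)>0$, so that
\[
  G_m(\rho,\eta)\ \ge\ \exp\(\eta\rho^{\,m-a_0(m-1)}\)\cdot\tfrac12\int_{A_\ell}\cE(0,\xi)\,d\xi\ \gtrsim\ \exp\(\eta\rho^{\,m-a_0(m-1)}\)\,\rho^{-\ga(m+1)}(\log\rho)^{-2}\ \longrightarrow\ \infty
\]
as $\rho\to\infty$; since every $\{\rho_j\}\in{\cal L}$ tends to infinity, this rules out $(0,u_1)\in B^{(m)}_\Delta$. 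The step I expect to demand the most care is precisely this balancing act in (iv): the annuli must be spread far enough apart to clear the level-$(m+1)$ forbidden bands (forcing $\ga>(m+1)/m$) yet closely enough that they unavoidably fall inside every level-$m$ forbidden band (forcing $\ga<m/(m-1)$), and it is the arithmetic inequality $(m+1)/m<m/(m-1)$ that makes the two demands compatible.
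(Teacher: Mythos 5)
Your construction is correct, and the strategy is sound, but it is worth noting that Lemma \ref{B_Delta^m-prop} is only \emph{cited} in the paper (from \cite{H06,M02,M05}); the closest thing to a proof that actually appears here is the argument for Proposition \ref{Prop-cB}, which generalizes parts (iii) and (iv). The two constructions are genuinely different. You build a \emph{single} datum $u_1$ whose Fourier transform is carried by an infinite lacunary family of dyadic annuli $A_k=\{s_k\le|\xi|\le 2s_k\}$ with doubly-exponential radii $s_k=A^{\ga^k}$ and purely polynomial amplitude $|\xi|^{-2N-n}(\log|\xi|)^{-2}$ on each $A_k$, then steers the free parameter $\ga$ past $m/(m-1)$ (for (i)--(iii)) or into the window $\((m+1)/m,\,m/(m-1)\)$ (for (iv)). The paper instead builds a \emph{sequence} $u_{1,j}$, each concentrated on a single shell $[r_j,r_j+1)$ with an amplitude tuned by the weight $\tcM(|\xi|/\rho_j)$, and with the radii defined \emph{implicitly} by the recursion $\rho_{j+1}=r_j+1$, $r_{j+1}=\cM(r_{j+1}/\rho_{j+1})$; it then argues about uniform boundedness/unboundedness of $\{u_{1,j}\}$ rather than about one fixed element. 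Your approach is more elementary and hands you single witnesses outright, which is what the stated lemma actually asks for; the paper's formulation is tailored to the infinite-order generalization $\cB_\Delta(\{M_k\},\{\tM_k\})$ where the recursion $r_{j+1}=\cM(r_{j+1}/\rho_{j+1})$ automatically calibrates the support shells to the infinite-order weight, something your explicit double-exponential choice of $s_k$ achieves only at the polynomial level. Two small points deserve tightening: for the first halves of (i) and (ii), the sentence ``likewise the Fourier-side characterization of ${\cal Q}_{L^2}$ fails'' should be replaced by the explicit observation (made precise by the Remark after Example \ref{lc-ex}) that the associated function $\mM(\cdot;\{M_k\})$ of \emph{any} log-convex sequence grows faster than every polynomial, so $\int \mM(|\xi|/\rho;\{M_k\})\cE(0,\xi)\,d\xi\gtrsim\sum_k\mM(s_k/\rho;\{M_k\})\,s_k^{-2N}(\log s_k)^{-2}$ diverges for every $\rho>0$ and every log-convex $\{M_k\}$; and for the second half of (ii) you should name a concrete quasianalytic $\{M_k\}$ with $\log\mM(r;\{M_k\})\lesssim r/\log r$, e.g.\ (after normalization) $M_k=\prod_{j=2}^k j\log(j+1)$, to certify $u_1\in{\cal Q}_{L^2}$.
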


\begin{Rem}
Denoting $p(r)=p(r;\rho):=(r/\rho)^m$ and 
$q(r)=q(r;\rho):=\exp(\eta r/(r/\rho)^{m})$ for $r\ge \rho$, 
the weight function $p(|\xi|)q(|\xi|)$ of $G_m(\rho,\eta)$ to $\cE(0,\xi)$ is not a standard one, because $p(r)q(r)$ is not monotone increasing with respect to $r$. 
Moreover, we observe the followings$:$ 
\begin{itemize}
\item 
$p(r;\rho)q(r;\rho)$ is monotone decreasing on 
$[\rho, \rho(\eta\rho(m-1)/m)^{1/(m-1)}]$, 
and monotone increasing on $[\rho(\eta\rho(m-1)/m)^{1/(m-1)},\infty)$ 
with respect to $r$. 
Thus the weight function $p(|\xi|;\rho)q(|\xi|;\rho)$ is non-standard in 
\begin{equation*}
  \Omega(\rho):=\left\{
  \xi\in \R^n\:;\: \rho \le |\xi| \le 
  \rho\(\frac{\eta\rho(m-1)}{m}\)^{\frac{1}{m-1}}
  \right\}.
\end{equation*}
\item
Noting $\lim_{r\to\infty}q(r;\rho)=1$, 
$p(|\xi|;\rho)q(|\xi|;\rho)$ is the usual weight of Sobolev type outside of $\Omega(\rho)$. 
\item
$q(|\xi|;\rho)$ has influence only near to $|\xi|=\rho$, and the effective area is smaller as $m$ larger. 
However, the influence of $q(|\xi|;\rho)$ cannot be ignored since $\rho \to \infty$. 
\end{itemize}
\end{Rem}

For strictly increasing positive continuous functions $\cM$ and $\tcM$, we consider a generalization of $G_m(\rho,\eta)$ as follows: 
\begin{equation*}
  G(\rho,\eta,\cM,\tcM):=\int_{|\xi|\ge \rho}
  \tcM\(\frac{|\xi|}{\rho}\)
  \exp\(\frac{\eta|\xi|}{\cM\(\frac{|\xi|}{\rho}\)}\)
  \cE(0,\xi)\,d\xi. 
\end{equation*}
Indeed, we see that 
$G(\rho,\eta,\cM,\tcM)=G_m(\rho,\eta)$ for $\cM(r)=\tcM(r)=r^m$. 
The main purpose of the present paper is to extend the order of 
$\cM(r),\widetilde{\cM}(r)$ from polynomial to infinite order.

\section{Preliminaries and main theorem}
\subsection{Log convex sequences}
Let us introduce {\it log convex sequences} and some properties of them. 
\begin{Def}[Log convex sequence]
Let $\{M_k\}=\{M_k\}_{k=0}^\infty$ be a sequence of positive real numbers. 
We call that $\{M_k\}$ is a {\it log convex sequence} if $0<M_0\le M_1$ and the following inequalities hold for any $k\in \N:$
\begin{equation}\label{lc}
  \frac{M_k}{k M_{k-1}} \le \frac{M_{k+1}}{(k+1)M_k}. 
\end{equation}
\end{Def}
\begin{Ex}\label{lc-ex}
The following sequences $\{M_k\}$ are log convex: 
\begin{itemize}
\item[(i)]
$M_k=\si^k k!^\nu$ with $\si>0$ and $\nu\ge 1$. 
\item[(ii)]
$M_0=1$ and $M_k=\prod_{j=1}^k j\exp(j^s)$ for $k\ge 1$ with $s>0$. 
\end{itemize}
\end{Ex}
\begin{Rem}
The usual definition of log convex sequence is not (\ref{lc}) but 
$M_k/M_{k-1} \le M_{k+1}/M_k$. In this case, $\{\si^k k!^\nu\}$ with $0\le \nu<1$ is also log convex. 
\end{Rem}
From now on, we suppose that the log convex sequences $\{M_k\}$ satisfy $M_0=M_1=1$ without loss of generality. 

For a log convex sequence we define the {\it associated function} as follows: 
\begin{Def}
For a log convex sequence $\{M_k\}$, we define the {\it associated function} $\mM(r;\{M_k\})$ on $(0,\infty)$ by 
\begin{equation*}
  \mM\(r;\{M_l\}\):=
  \sup_{k\ge 1}\left\{\frac{r^k}{M_k}\right\}. 
\end{equation*}
\end{Def}

We shall introduce some lemmas for log convex sequences and the associated functions. For the proofs of the lemmas will be given in Appendix.

\begin{Lemm}\label{lc-prop}
Let $\{M_k\}$ be a log convex sequence. 
Then the following properties are established$:$ 
\begin{itemize}
\item[(i)]
$M_l/M_{l-1}\le lM_k/(kM_{k-1})$ for any $k>l \ge 1$.
\item[(ii)]
$M_k M_l \le M_{k+l}$. 
\item[(iii)]
There exists a constant $\de\in(0,1)$ such that 
$M_k^{1/k}\le \de M_k/M_{k-1}$. 
\item[(iv)]
$M_k/M_{k-1}-M_l/M_{l-1}\ge k-l$ for any $k>l \ge 1$. 
\item[(v)]
$\{k!M_k\}$ is log convex. 
\end{itemize}
\end{Lemm}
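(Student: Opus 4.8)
\noindent\emph{A plan for the proof.} The plan is to reduce all five assertions to one elementary observation. Put $\mu_k := M_k/(kM_{k-1})$ for $k\ge 1$. Then the defining inequality $(\ref{lc})$ says precisely that $\{\mu_k\}_{k\ge 1}$ is nondecreasing, while the normalization $M_0=M_1=1$ gives $\mu_1=1$; hence $\mu_k\ge 1$ for all $k\ge 1$, and telescoping the identity $M_k/M_{k-1}=k\mu_k$ over $M_0=1$ yields $M_k=\prod_{j=1}^{k}j\mu_j$. With this in hand, (i) is just the inequality $\mu_l\le\mu_k$ for $l<k$ (multiply through by $l$), and (iv) follows from
\[
  \frac{M_k}{M_{k-1}}-\frac{M_l}{M_{l-1}}
  =k\mu_k-l\mu_l\ge k\mu_l-l\mu_l=(k-l)\mu_l\ge k-l
\]
for $k>l\ge 1$, using $\mu_k\ge\mu_l\ge 1$.

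For (ii) I would write $M_l=\prod_{j=1}^{l}j\mu_j$ and $M_{k+l}/M_k=\prod_{j=1}^{l}(k+j)\mu_{k+j}$ and compare the two products factor by factor: since $j\le k+j$ and $\mu_j\le\mu_{k+j}$, the $j$-th factor on the left is dominated by the $j$-th factor on the right, whence $M_kM_l\le M_{k+l}$ (the cases $k=0$ or $l=0$ being trivial). For (v), one first observes that $(\ref{lc})$ forces the ordinary log-convexity $M_k/M_{k-1}\le M_{k+1}/M_k$, because $M_{k+1}/M_k=(k+1)\mu_{k+1}\ge (k+1)\mu_k=\frac{k+1}{k}\cdot\frac{M_k}{M_{k-1}}\ge\frac{M_k}{M_{k-1}}$; then, setting $N_k:=k!\,M_k$, a one-line computation gives $N_k/(kN_{k-1})=M_k/M_{k-1}$ and $N_{k+1}/((k+1)N_k)=M_{k+1}/M_k$, so $(\ref{lc})$ for $\{N_k\}$ is exactly the ordinary log-convexity of $\{M_k\}$ just established, and $N_0=N_1=1$.

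The only assertion calling for a genuine estimate is (iii). Since $\mu_j\le\mu_k$ for $j\le k$, the product formula gives $M_k=\prod_{j=1}^{k}j\mu_j\le k!\,\mu_k^{\,k}=\frac{k!}{k^k}\left(\frac{M_k}{M_{k-1}}\right)^{k}$, hence $M_k^{1/k}\le\frac{(k!)^{1/k}}{k}\cdot\frac{M_k}{M_{k-1}}$. By AM--GM applied to $1,2,\dots,k$ one has $(k!)^{1/k}\le\frac{k+1}{2}$, so the coefficient is at most $\frac{k+1}{2k}=\frac12+\frac1{2k}\le\frac34$ for $k\ge 2$, and $\delta=\frac34$ does the job. (At $k=1$ both sides equal $1$, so the estimate with $\delta<1$ can only be claimed for $k\ge 2$, which is all that the applications use.) I expect this Stirling-type bound --- and pinning down the exact range of $k$ for which a strict constant is available --- to be the only real wrinkle; everything else is bookkeeping with the monotone sequence $\{\mu_k\}$.
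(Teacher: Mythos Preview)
Your proof is correct and mirrors the paper's argument almost exactly: the paper telescopes the ratios $M_j/M_{j-1}$ just as you do (without introducing the name $\mu_k$), arrives at the same bound $M_k\le(k!/k^k)(M_k/M_{k-1})^k$ for (iii), and handles (ii), (iv), (v) by the same factor-by-factor comparisons. The only real difference is that for (iii) the paper invokes Stirling's formula to assert $k!/k^k\le\delta^k$ for some $\delta\in(0,1)$ rather than your AM--GM bound with the explicit constant $\delta=3/4$; your observation that the case $k=1$ must be excluded is correct and is a point the paper's statement and proof both gloss over.
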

%
\begin{Lemm}\label{lc-binomial}
Let $\{M_k\}$ be a log convex sequence. 
For any non-negative integers $j,k,q$ and $r$ satisfying $0\le j \le k$ the following estimates are established$:$ 
\begin{equation}\label{lc-binomial-e1}
  \binom{k}{j}\frac{M_{r+j}M_{q+k-j}}{M_{q+r+k}}\le1,
\end{equation}
and
\begin{equation}\label{lc-binomial-e2}
  \binom{k}{j}\frac{M_{r+j}M_{r+k-j}}{M_{r+k}}\le
  \binom{r+k-1}{r}M_r
\end{equation}
for any $k\ge 1$. 
\end{Lemm}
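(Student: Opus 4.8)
My plan is to prove the two estimates separately: (\ref{lc-binomial-e1}) will come from super-multiplicativity of the sequence $\{k!M_k\}$, while (\ref{lc-binomial-e2}) requires a more delicate induction.

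For (\ref{lc-binomial-e1}), I first note that $\{k!M_k\}$ is again log convex by Lemma \ref{lc-prop}(v), so that Lemma \ref{lc-prop}(ii) applied to it reads $(a!\,M_a)(b!\,M_b)\le(a+b)!\,M_{a+b}$, i.e.
\begin{equation*}
  \binom{a+b}{a}M_a M_b\le M_{a+b}\qquad(a,b\ge0).
\end{equation*}
Choosing $a=r+j$ and $b=q+k-j$ (so $a+b=q+r+k$), it then remains only to bound $\binom{k}{j}\le\binom{q+r+k}{r+j}$, which I would obtain from the elementary monotonicities $\binom{n}{m}\le\binom{n+1}{m+1}$ (applied $r$ times) and $\binom{n}{m}\le\binom{n+1}{m}$ (applied $q$ times). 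Multiplying the two estimates gives (\ref{lc-binomial-e1}).

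The estimate (\ref{lc-binomial-e2}) is the substantive one, and I expect it to be the main obstacle. The naive attempts all overshoot: bounding $\binom{k}{j}\le M_k/(M_jM_{k-j})$, or specializing (\ref{lc-binomial-e1}) with $q=r$, only produces $\binom{k}{j}M_{r+j}M_{r+k-j}\le M_{2r+k}$, and $M_{2r+k}/(M_rM_{r+k})$ (equal to $\binom{2r+k}{r}$ when $M_k=k!$) is in general far larger than the admissible factor $\binom{r+k-1}{r}$; likewise a straight induction on $r$ would require $\mu_{r+1+j}\mu_{r+1+k-j}\le\tfrac{r+k}{r+1}\mu_{r+1}\mu_{r+1+k}$ (with $\mu_l:=M_l/M_{l-1}$), which already fails for $M_k=k!^\nu$ with $\nu$ large. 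Instead I would fix $r\ge0$ and induct on $k\ge1$ on the statement
\begin{equation*}
  S(r,k):\qquad \binom{k}{j}\,\frac{M_{r+j}M_{r+k-j}}{M_{r+k}}\ \le\ \binom{r+k-1}{r}M_r\qquad(0\le j\le k).
\end{equation*}
The case $k=1$ is immediate, both sides being $M_r$.

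For the inductive step I would exploit the symmetry of $S(r,k+1)$ under $j\mapsto k+1-j$ to assume $j\le(k+1)/2$, whence $y:=k+1-j\ge(k+1)/2$ and $j\le k$. From $\binom{k+1}{j}=\tfrac{k+1}{y}\binom{k}{j}$ and the identity
\begin{equation*}
  \frac{M_{r+j}M_{r+k+1-j}}{M_{r+k+1}}=\frac{\mu_{r+y}}{\mu_{r+k+1}}\cdot\frac{M_{r+j}M_{r+k-j}}{M_{r+k}}
\end{equation*}
(simply unfolding $\mu_{r+y}=M_{r+y}/M_{r+k-j}$ and $\mu_{r+k+1}=M_{r+k+1}/M_{r+k}$), the inductive hypothesis $S(r,k)$ together with the sharp ratio bound $\mu_{r+y}/\mu_{r+k+1}\le(r+y)/(r+k+1)$ — a direct consequence of Lemma \ref{lc-prop}(i), i.e.\ of $\mu_l/l$ being nondecreasing — give
\begin{equation*}
  \binom{k+1}{j}\,\frac{M_{r+j}M_{r+k+1-j}}{M_{r+k+1}}\ \le\ \frac{k+1}{y}\cdot\frac{r+y}{r+k+1}\cdot\binom{r+k-1}{r}M_r .
\end{equation*}
Since $\binom{r+k}{r}=\tfrac{r+k}{k}\binom{r+k-1}{r}$, the step reduces to the arithmetic inequality $k(k+1)(r+y)\le y(r+k)(r+k+1)$, which rearranges to $yr^2+r\bigl(2ky+y-k(k+1)\bigr)\ge0$ and is valid precisely because $y\ge(k+1)/2$ forces $2ky\ge k(k+1)$. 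I expect the only real difficulty to be in locating this route for (\ref{lc-binomial-e2}): that one should induct on $k$ rather than $r$, tame the Pascal factor $\tfrac{k+1}{y}$ by the reduction $j\le(k+1)/2$, and use the sharp linear ratio bound from log convexity rather than the trivial $\mu_{r+y}/\mu_{r+k+1}\le1$.
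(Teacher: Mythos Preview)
Your proof is correct. For (\ref{lc-binomial-e1}) your route via super-multiplicativity of $\{k!M_k\}$ is a clean repackaging of the paper's direct telescoping product; the two are essentially equivalent.

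For (\ref{lc-binomial-e2}) the approaches genuinely differ. The paper first strips off the $M$-dependence in one shot: writing $M_{r+j}/M_r$ and $M_{r+k}/M_{r+k-j}$ as products of ratios and using log convexity termwise, one gets
\[
  \binom{k}{j}\frac{M_{r+j}M_{r+k-j}}{M_{r+k}}
  \ \le\ M_r\,\binom{r+k-1}{r}\,N_j(k,r),
  \qquad
  N_j(k,r):=\frac{(k-1)!\,k!}{(k+r-1)!\,(k+r)!}\cdot\frac{(r+j)!\,(r+k-j)!}{j!\,(k-j)!},
\]
so the whole claim reduces to the purely combinatorial inequality $N_j(k,r)\le1$. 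The paper then locates the maximizing $j=\lfloor(k+1)/2\rfloor$ and checks $N_{\lfloor(k+1)/2\rfloor}(k,r)\le1$ by hand. Your induction on $k$ avoids isolating this auxiliary quantity: you peel off one ratio $\mu_{r+y}/\mu_{r+k+1}$, bound it sharply by $(r+y)/(r+k+1)$ via Lemma~\ref{lc-prop}(i), and close with the arithmetic inequality $k(k+1)(r+y)\le y(r+k)(r+k+1)$, which is exactly where the restriction $y\ge(k+1)/2$ enters. Both arguments ultimately hinge on the same symmetry (the worst $j$ sits near $k/2$), but the paper's version makes the extremal structure explicit, while yours is more streamlined and never needs to compute the maximum. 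Either buys the lemma; your inductive step is arguably the more transparent of the two.
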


\begin{Lemm}\label{lemma-af}
The associated function $\mM(r;\{M_k\})$ is strictly increasing for any $r>0$, 
\begin{equation*}
  \lim_{r\to\infty}\mM(r;\{M_k\})=\infty,
\end{equation*}
and having the following representation$:$ 
\begin{equation}\label{lemma-af-e2}
  \mM(r;\{M_k\})=
\begin{cases}
  \dfrac{r^k}{M_k}, & r\in \left[\dfrac{M_k}{M_{k-1}},\dfrac{M_{k+1}}{M_k}\right)
  \;\;
  (k=1,2,\ldots),
  \\[12pt]
  r, & r \in\left[0,\dfrac{M_1}{M_0}\right). 
\end{cases}
\end{equation}
\end{Lemm}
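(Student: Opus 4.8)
The plan is to work directly from the definition $\mM(r;\{M_k\})=\sup_{k\ge 1}\{r^k/M_k\}$ and the log-convexity hypothesis (\ref{lc}), rewritten as the statement that the sequence of ratios $a_k:=M_k/(kM_{k-1})$ is nondecreasing in $k$. The monotonicity and the limit $\mM(r)\to\infty$ are cheap: each $r\mapsto r^k/M_k$ is strictly increasing on $(0,\infty)$, so the supremum of this family is strictly increasing as well (at any $r$ the sup is attained, as we verify below, so strictness is genuine and not merely non-strictness); and since $r^1/M_1=r\to\infty$, already the first term forces $\mM(r)\to\infty$. The substance of the lemma is the piecewise formula (\ref{lemma-af-e2}), i.e. identifying which index $k$ achieves the supremum for a given $r$.

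First I would establish that for each fixed $r>0$ the supremum is actually attained by some finite $k$. From Lemma \ref{lc-prop}(iii) (or directly from log-convexity, which forces $M_k/M_{k-1}\to\infty$, since the ratios $a_k$ are nondecreasing and $a_k\to\infty$ would follow — if they were bounded the sequence $kM_{k-1}$ would grow too slowly) we get $M_k^{1/k}\to\infty$, hence $r^k/M_k\to 0$ as $k\to\infty$; combined with $r^1/M_1=r>0$ this shows the sup is a max attained at some finite index. Next, the key monotonicity step: for fixed $r$, consider the ratio of consecutive terms
\begin{equation*}
  \frac{r^{k+1}/M_{k+1}}{r^{k}/M_{k}}=\frac{r}{M_{k+1}/M_k}.
\end{equation*}
Because $\{M_k\}$ is log convex, the ratios $M_{k+1}/M_k$ are nondecreasing in $k$ (this follows from (\ref{lc}): $M_{k+1}/M_k=(k+1)a_{k+1}\ge (k+1)a_k\ge k a_k=M_k/M_{k-1}$). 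Therefore the term $r^k/M_k$ increases as long as $r\ge M_{k+1}/M_k$ and decreases once $r<M_{k+1}/M_k$; so the maximizing index $k$ is the largest $k$ with $M_k/M_{k-1}\le r$, equivalently the unique $k\ge 1$ with $M_k/M_{k-1}\le r<M_{k+1}/M_k$, which gives $\mM(r)=r^k/M_k$ on the interval $[M_k/M_{k-1},M_{k+1}/M_k)$. For $r\in[0,M_1/M_0)=[0,1)$ (recall $M_0=M_1=1$) no term with $k\ge 1$ can beat... one checks that on $[0,1)$ the term $k=1$ still dominates, giving $\mM(r)=r$; I would handle this boundary piece by noting $r^k/M_k\le r$ for all $k$ when $r\le 1$ and the ratios $M_{k+1}/M_k\ge 1$, so the "largest $k$ with $M_k/M_{k-1}\le r$" degenerates to $k=1$ with value $r$.

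The main obstacle — really the only place requiring care — is the bookkeeping at the interval endpoints: showing the intervals $[M_k/M_{k-1},M_{k+1}/M_k)$ genuinely partition $(0,\infty)$ (which needs $M_k/M_{k-1}$ strictly increasing, hence uses Lemma \ref{lc-prop}(iv) to rule out degenerate empty intervals), and checking that at a common endpoint $r=M_{k+1}/M_k$ the two candidate formulas $r^k/M_k$ and $r^{k+1}/M_{k+1}$ agree, so that $\mM$ is well-defined and continuous there. Once the maximizing index is pinned down as above, assembling these pieces is routine and yields exactly (\ref{lemma-af-e2}), completing the proof.
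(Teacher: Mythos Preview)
Your proposal is correct and follows essentially the same approach as the paper: both identify the maximizing index via the monotonicity of the ratios $M_{k+1}/M_k$ (the paper checks $r^l/M_l\ge r^k/M_k$ for $k<l$ and $k>l$ by explicit products of ratios, which is exactly your unimodality argument written out). Your argument for $\mM(r)\to\infty$ via $\mM(r)\ge r/M_1=r$ is in fact simpler than the paper's, which invokes Lemma~\ref{lc-prop}(iii) to get $\mM(M_l/M_{l-1})\ge\de^{-l}$.
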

\begin{Lemm}\label{lemma-af1}
The following estimate is valid for any positive constant $d:$
\begin{equation*}
  \sup_{r>0}\left\{\frac{\mM\(r+d;\{M_k\}\)}{\mM\(r;\{M_k\}\)}
  \right\} \le (2e)^2\(1+d^{d+1}\).
\end{equation*}
\end{Lemm}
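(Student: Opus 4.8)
The statement to prove is the bound
\[
  \sup_{r>0}\frac{\mM(r+d;\{M_k\})}{\mM(r;\{M_k\})} \le (2e)^2\bigl(1+d^{d+1}\bigr)
\]
for every $d>0$. The natural approach is to use the explicit piecewise representation of the associated function from Lemma \ref{lemma-af} and estimate the quotient on each of the intervals $[M_k/M_{k-1},M_{k+1}/M_k)$ separately. On such an interval $\mM(r)=r^k/M_k$, while $\mM(r+d)=(r+d)^{k'}/M_{k'}$ for the index $k'\ge k$ with $r+d\in[M_{k'}/M_{k'-1},M_{k'+1}/M_{k'})$. So the first reduction is: fix $r$, let $k=k(r)$ and $k'=k(r+d)\ge k$, and write the quotient as
\[
  \frac{(r+d)^{k'}}{M_{k'}}\cdot\frac{M_k}{r^k}.
\]

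First I would dispose of the trivial regime. If both $r$ and $r+d$ lie in $[0,M_1/M_0)$, the quotient is $(r+d)/r\le 1 + d/r$, which is not bounded as $r\to0$; but note $\mM(r)=r$ forces $r<1$, and actually the case $r\to 0$ needs care — here $\mM(r+d)/\mM(r) = (r+d)/r$ only while $r+d<1$ too, and as $r\to 0$ this blows up. Wait: so I must instead compare against $d^{d+1}$ cleverly, or observe that $\mM$ is defined via a supremum over $k\ge1$ so $\mM(r)\ge r$ always, and more importantly $\mM(r)\ge r^k/M_k$ for the \emph{best} $k$. The cleaner route: since $\mM(r+d)\ge \mM(r)$ and the ratio is trivially $\ge 1$, the real work is an upper bound. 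I would split into (a) $r\le d$ and (b) $r>d$. In case (b), on the interval containing $r$ one has $r+d\le 2r$, hence $(r+d)^{k'} \le (2r)^{k'}$, and I would use the monotonicity facts (Lemma \ref{lc-prop}(i), which gives $M_{k'}/M_{k'-1}$ increasing and comparability of consecutive ratios) to telescope $M_{k'}/M_k = \prod_{j=k+1}^{k'} (M_j/M_{j-1})$ against powers of $r$, since each $M_j/M_{j-1}\le r$ for $j\le k'$ roughly — actually each $M_j/M_{j-1}\le M_{k'}/M_{k'-1}\le r+d\le 2r$. This should yield a bound like $(2r)^{k'}/(r^k \prod\ldots) \le 2^{k'}\cdot$ (something controlled), but the exponent $k'$ can be large, so a naive $2^{k'}$ is fatal. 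The right manipulation is to keep $(r+d)/r$ terms: write $(r+d)^{k'}/r^k = r^{k'-k}(1+d/r)^{k'}$ and bound $r^{k'-k}/(M_{k'}/M_k) = \prod_{j=k+1}^{k'} (r/(M_j/M_{j-1}))\le 1$ since for $j\le k'$, $M_j/M_{j-1}\ge \ldots$ — no, that inequality goes the wrong way for $j>k$. Hmm.

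Let me reconsider: the honest estimate is $\mM(r+d) \le \sup_k (r+d)^k/M_k$, and I'd bound $(r+d)^k \le r^k e^{kd/r}\cdot$... Actually the slick classical trick for associated functions: for any $a,b>0$, $\mM(a)\mM(b)\le$ (something) — but more directly, $\mM(r+d)/\mM(r)$: since $\mM$ is increasing and built from the supremum, and using that for the optimal index $k=k(r+d)$ we have $\mM(r+d)=(r+d)^k/M_k$ while $\mM(r)\ge r^k/M_k$ (just not optimal), so
\[
  \frac{\mM(r+d)}{\mM(r)}\le\frac{(r+d)^k}{r^k}=\Bigl(1+\frac{d}{r}\Bigr)^k
\]
where $k=k(r+d)$ satisfies $r+d\ge M_k/M_{k-1}\ge\ldots$. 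By Lemma \ref{lc-prop}(iv), $M_k/M_{k-1}\ge M_k/M_{k-1}-M_1/M_0\ge k-1$, so $k\le r+d+1$. Therefore the ratio is at most $(1+d/r)^{r+d+1}$. Now I must show $\sup_{r>0}(1+d/r)^{r+d+1}\le (2e)^2(1+d^{d+1})$. This is now a pure one-variable calculus fact. For $r\ge d$: $(1+d/r)^{r+d+1}\le 2^{r+d+1}$? No good. Instead use $(1+d/r)^r\le e^d$ always, so $(1+d/r)^{r+d+1} = (1+d/r)^r\cdot(1+d/r)^{d+1}\le e^d(1+d/r)^{d+1}$. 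For $r\ge 1$ this is $\le e^d(1+d)^{d+1}\le e^d\cdot(e\cdot d)^{d+1}$ hmm — getting messy but it's a bounded computation in $r$ and matches the shape of the RHS up to the generous constants $(2e)^2$ and the $1+d^{d+1}$.

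\textbf{The main obstacle.} The one genuine subtlety — and where I expect to spend effort — is handling \emph{small} $r$, i.e. $r\downarrow 0$, where $k(r+d)$ stays bounded (equal to the index with $r+d\in[M_k/M_{k-1},M_{k+1}/M_k)$, which tends to $k(d)$) but $\mM(r)=r$ eventually, so the ratio behaves like $((d)^{k(d)}/M_{k(d)})/r \to\infty$. That means the bound as literally stated must \emph{fail} unless... rereading: ah, but $\mM(r)\ge r^k/M_k$ for \emph{that same} $k=k(r+d)$, and $\mM(r)=\sup_j r^j/M_j \ge r^k/M_k$ — yes. So $\mM(r+d)/\mM(r)\le (r+d)^k/r^k = (1+d/r)^k$, and as $r\to 0$ with $k$ fixed this $\to\infty$ like $r^{-k}$. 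So the statement as I've set it up seems false for small $r$! The resolution must be that I'm mis-reading and the real content forces $k\ge1$ and $r\ge$ something, or the paper intends $r$ bounded below, or — most likely — I should recheck: when $r\to 0^+$, is $k(r+d)$ really bounded? $r+d\to d$, so $k(r+d)\to k(d)$, bounded. Hence the paper's Lemma, if correct, needs the bound to come out differently; the likely fix is that one does NOT use the same $k$ downstairs but notes $\mM(r) = r$ for small $r$ and $\mM(r+d)\le \mM(d+M_1/M_0)\le$ const when $r<M_1/M_0$, giving ratio $\le \text{const}/r$ — still unbounded. Therefore I suspect the intended reading is $\sup$ over $r$ in a range bounded away from $0$, OR the proof genuinely bounds $\mM(r+d)\le C\,\mM(r)$ using $\mM(r)\ge$ a positive constant for $r$ bounded below and handles small $r$ by a direct argument showing $(r+d)^k/M_k \le (\text{stuff})\cdot r$ — which works iff $k=1$ there. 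Reconciling this is the crux; pragmatically I would (i) establish $k(r+d)\le r+d+1$ via Lemma \ref{lc-prop}(iv), (ii) prove $\mM(r+d)/\mM(r)\le(1+d/r)^{k(r+d)}$, (iii) reduce to the elementary inequality $\sup_{r>0}(1+d/r)^{\lceil r+d+1\rceil}\le(2e)^2(1+d^{d+1})$ and verify it by splitting at $r=1$ and using $(1+d/r)^r\le e^d$ together with crude bounds on the leftover factors — the $(2e)^2$ and the $+1$ in the exponent being exactly the slack that absorbs the boundary cases $k=1$ and the jump between $\mM(r)=r$ and the polynomial pieces.
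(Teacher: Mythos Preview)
Your worry about $r\to 0^+$ is correct and points to a genuine imprecision in the lemma as stated: since $\mM(r)=r\to 0$ while $\mM(r+d)\to\mM(d)>0$, the ratio is unbounded over all $r>0$. The paper's own proof silently restricts to $r\ge M_1/M_0=1$---its first line places $r$ in some $[M_l/M_{l-1},M_{l+1}/M_l)$ with $l\ge 1$, and the step $(1+d/r)^l\le e^d$ uses $l\le r$---and moreover concludes with $(2e)^d(1+d^{d+1})$, not $(2e)^2(1+d^{d+1})$. Every application of the lemma in the paper has $r$ bounded below and only needs \emph{some} bound depending on $d$, so neither imprecision matters downstream; but you should not expect to prove the statement exactly as written, and in particular your step~(iii) cannot hold with the constant $(2e)^2$.

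For $r\ge 1$ your core idea is correct and in fact cleaner than the paper's. Lower-bounding $\mM(r)\ge r^m/M_m$ with the non-optimal index $m=k(r+d)$ gives $\mM(r+d)/\mM(r)\le(1+d/r)^m$; then $m\le r+d$ (since $M_m/M_{m-1}\ge m$) together with $(1+d/r)^r\le e^d$ yields $e^d(1+d)^d$ for $r\ge 1$, of the same shape as the paper's bound. The paper instead keeps both indices $l=k(r)$ and $m=k(r+d)$, writes the ratio exactly as $\frac{M_l}{M_m}(1+d/r)^l(r+d)^{m-l}$, bounds $(1+d/r)^l\le e^d$, expands $(r+d)^{m-l}\le 2^{m-l-1}\bigl((M_{l+1}/M_l)^{m-l}+d^{m-l}\bigr)$, telescopes $\frac{M_l}{M_m}(M_{l+1}/M_l)^{m-l}\le 1$, and closes with $m-l-1\le d$ from Lemma~\ref{lc-prop}(iv). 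One small correction to your scratchwork: where you abandoned the product $\prod_{j=k+1}^{k'}r/(M_j/M_{j-1})$ saying the inequality ``goes the wrong way for $j>k$'', it in fact goes the right way---for $j\ge k+1$ one has $M_j/M_{j-1}\ge M_{k+1}/M_k>r$, so each factor is $<1$ and the product is $\le 1$, leading directly to the same $(1+d/r)^m$ bound you obtained via the $\sup$ argument.
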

\begin{Lemm}\label{lemma-af2}
Let $\{M_k\}$ and $\{N_k\}$ be log convex sequences satisfying 
$M_k/M_{k-1}<N_k/N_{k-1}$ for any $k\in \N$. 
Then we have 
\begin{equation}\label{lemma-af2-e1}
  \frac{\mM\(r;\{M_k\}\)}{\mM\(r;\{N_k\}\)}>1.
\end{equation}
In particular, if $\lim_{k\to\infty}N_k/M_k=\infty$, then we have
\begin{equation}\label{lemma-af2-e2}
  \lim_{r\to\infty}\frac{\mM\(r;\{M_k\}\)}{\mM\(r;\{N_k\}\)}=\infty.
\end{equation}
Moreover, if $\lim_{k\to\infty}N_k/M_{k+1}=\infty$, then we have
\begin{equation}\label{lemma-af2-e3}
  \lim_{r\to\infty}\frac{\mM\(r;\{M_k\}\)}{r\:\mM\(r;\{N_k\}\)}=\infty.
\end{equation}
\end{Lemm}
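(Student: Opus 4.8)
\emph{Proof strategy.}
The whole argument is bookkeeping with the explicit piecewise formula \eqref{lemma-af-e2} of Lemma~\ref{lemma-af}. Fix $r>0$ and, applying \eqref{lemma-af-e2} to $\{N_k\}$, let $k=k(r)\ge1$ be the index with $r\in[N_k/N_{k-1},\,N_{k+1}/N_k)$, so that $\mM(r;\{N_k\})=r^{k}/N_{k}$. Since $\mM(r;\{M_k\})=\sup_{l\ge1}r^{l}/M_l\ge r^{k}/M_k$, we get at once
\[
  \frac{\mM(r;\{M_k\})}{\mM(r;\{N_k\})}\ \ge\ \frac{N_k}{M_k}
  \ =\ \prod_{j=1}^{k}\frac{N_j/N_{j-1}}{M_j/M_{j-1}}\ \ge\ 1,
\]
where the telescoping identity uses $M_0=N_0$ and the last inequality uses the hypothesis $M_j/M_{j-1}<N_j/N_{j-1}$; this yields \eqref{lemma-af2-e1} (with strict inequality once $k(r)\ge2$).

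For \eqref{lemma-af2-e2} and \eqref{lemma-af2-e3} the extra input is that $k(r)\to\infty$ as $r\to\infty$. This is immediate because the left endpoints $N_k/N_{k-1}$ of the intervals in \eqref{lemma-af-e2} tend to infinity: taking $l=1$ in Lemma~\ref{lc-prop}(iv) gives $N_k/N_{k-1}\ge k$. Now for \eqref{lemma-af2-e2} the inequality above reads $\mM(r;\{M_k\})/\mM(r;\{N_k\})\ge N_{k(r)}/M_{k(r)}$, which tends to infinity since $N_k/M_k\to\infty$ and $k(r)\to\infty$. For \eqref{lemma-af2-e3} we keep instead the $l=k+1$ term in the supremum, i.e.\ $\mM(r;\{M_k\})\ge r^{k+1}/M_{k+1}$, and combine it with $\mM(r;\{N_k\})=r^{k}/N_{k}$ to obtain
\[
  \frac{\mM(r;\{M_k\})}{r\,\mM(r;\{N_k\})}\ \ge\ \frac{r^{k+1}/M_{k+1}}{r\cdot r^{k}/N_k}
  \ =\ \frac{N_{k(r)}}{M_{k(r)+1}},
\]
which tends to infinity since $N_k/M_{k+1}\to\infty$ and $k(r)\to\infty$. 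Note that in both cases no monotonicity of $k\mapsto N_k/M_k$ or $k\mapsto N_k/M_{k+1}$ is needed: as $k(r)\to\infty$, it suffices that the full sequences tend to infinity, which is exactly the assumption.

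I do not anticipate a genuine obstacle; the only place requiring a little care is the behaviour on the first interval $r\in[0,M_1/M_0)$ in \eqref{lemma-af-e2}, where $\mM$ is simply linear in $r$, and the legitimacy of the telescoping product, which relies on $M_0=N_0$; both are handled by the standing normalization $M_0=M_1=1$.
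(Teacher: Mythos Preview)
Your argument is correct and follows essentially the same route as the paper: both bound the ratio from below by $N_{l}/M_{l}$ (respectively $N_{l}/M_{l+1}$), where $l=l(r)$ is the index for which $r\in[N_l/N_{l-1},N_{l+1}/N_l)$, and then let $l(r)\to\infty$. The only difference is cosmetic: the paper computes $\mM(r;\{M_k\})$ via its own interval representation (introducing a second index $m\ge l$) and then simplifies $(N_l/M_m)r^{\,m-l}\ge N_l/M_l$, whereas you bypass this by invoking the supremum definition $\mM(r;\{M_k\})\ge r^{l}/M_l$ (or $r^{l+1}/M_{l+1}$) directly.
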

\begin{Ex}
The associated function of the sequence in Example \ref{lc-ex} (i): 
\begin{align*}
  \cM_1(r):=\mM(r;\{\si^k k!^\nu\})
\end{align*}
satisfies 
\[
  \log \cM_1(r) \simeq r^{\frac{1}{\nu}},
\]
where $f(r)\simeq g(r)$ with positive functions $f,g$ denotes that there exist a positive constant $C>1$ such that $C^{-1} f(r) \le g(r) \le C f(r)$. 
More precisely, on $[M_k/M_{k-1},M_{k+1}/M_k)=[\si k^\nu,\si (k+1)^\nu)$ we have 
\begin{align*}
  \log\cM_1(r)
  \begin{cases}
    \ge \log\(\frac{1}{M_k}\(\frac{M_k}{M_{k-1}}\)^k\)
      =\nu\log\frac{k^k}{k!}
      =\nu k-\frac{\nu}{2}\log k +O(1),
\\
    \le \log\(\frac{1}{M_k}\(\frac{M_{k+1}}{M_k}\)^k\)
      =\nu\log\frac{(k+1)^k}{k!}
      =\nu k-\frac{\nu}{2}\log k + O(1),
  \end{cases}
\end{align*}
and 
\begin{align*}
  \nu k-\frac{\nu}{2}\log k 
  = \nu \si^{-\frac{1}{\nu}} r^{\frac{1}{\nu}} 
  - \log\sqrt{r} + O(1) 
\end{align*}
as $k\to\infty$ by Stirling's formula. 
Thus we have 
\begin{equation}\label{af-ex1}
  \cM_1(r) 
  \simeq 
  \frac{1}{\sqrt{r}}\exp\(\nu \si^{-\frac{1}{\nu}} r^{\frac{1}{\nu}}\).
\end{equation}
\end{Ex}
\begin{Rem}\label{rem-af-k!}
If $\{N_k\}$ is log convex, then 
$N_k/N_{k-1}\ge k=k!/(k-1)!$ by Lemma \ref{lc-prop} (iv). 
Therefore, by Lemma \ref{lemma-af2} and (\ref{af-ex1}) with $\nu=1$ and $\si=1$, there exists a positive constant $C$ such that 
\begin{align*}
  \log\mM\(r;\{N_k\}\) \le  \log\mM\(r;\{k!\}\) 
  \le C r
\end{align*}
for $r \ge 1$. 
\end{Rem}
\begin{Ex}\label{af-ex2}
The associated function of the sequence in Example \ref{lc-ex} (ii): 
\begin{equation*}
  \cM_2(r):=\mM\(r;\left\{\prod_{j=1}^k j\exp(j^s)\right\}\)
\end{equation*}
satisfies 
\begin{equation}\label{ex2-e}
  \log \cM_2(r) \simeq (\log(1+r))^{1+\frac{1}{s}}. 
\end{equation}
Indeed, on $[M_k/M_{k-1},M_{k+1}/M_k)=[k\exp(k^s),(k+1)\exp((k+1)^s))$, we have 
\begin{align*}
  \log\cM_2(r)
  \begin{cases}
    \ge 
       k^{s+1}\(1-\dfrac{\sum_{j=1}^k j^s}{k^{s+1}}\)
      +k\(\log k-\dfrac{\sum_{j=1}^k\log j}{k}\)
      \simeq k^{s+1},
  \\[12pt]
    \le 
       k^{s+1}\(\(\dfrac{k+1}{k}\)^s-\dfrac{\sum_{j=1}^k j^s}{k^{s+1}}\)
      +(k+1)\(\log(k+1)-\dfrac{\sum_{j=1}^{k+1}\log j}{k+1}\)
      \simeq k^{s+1}.
  \end{cases}
\end{align*}
Therefore, noting $k\simeq (\log(1+r))^{1/s}$, we have (\ref{ex2-e}). 
\end{Ex}
\begin{Rem}
By Lemma \ref{lc-prop} (i) we have $\lim_{k\to\infty}M_k/M_{k-1}=\infty$, 
it follows that the associated function $\mM(r;\{M_k\})$ increases faster than any polynomial order as $r\to\infty$ by the representation (\ref{lemma-af-e2}). 
If we adopt the same definition of the associated function for a finite sequence of positive numbers $\{M_k\}_{k=0}^l$ with $l\in \N$, then we have $\mM(r;\{M_k\})\simeq r^l$ ($r\to\infty$). 
\end{Rem}

\subsection{Extension of Manfrin's class in the ultradifferentiable class}
Let us introduce the ultradifferentiable class $\cH(\{M_k\})$ as follows: 
\begin{Def}\label{udclass}
For a log convex sequence $\{M_k\}$ and a positive real number $\rho$, 
we define the ultradifferentiable classes $\cH(\rho,\{M_k\})$ and $\cH(\{M_k\})$ by 
\begin{equation*}
  \cH(\rho,\{M_k\}):=
  \left\{ f\in L^2(\R^n)\:;\: 
  \int_{\R^n}\mM\(\frac{|\xi|}{\rho};\{M_k\}\) |\hat{f}(\xi)|^2\,d\xi<\infty
  \right\}
\end{equation*}
and
\begin{equation*}
  \cH(\{M_k\}):=\bigcup_{\rho>0}\cH(\rho,\{M_k\}).
\end{equation*}
\end{Def}
By the properties of log convex sequences and the associated functions, we immediately see the following lemma: 

\begin{Lemm}\label{lemm-uldclass}
\begin{itemize}
\item[(i)] 
If $f\in \cH(\rho,\{M_k\})$, 
then $\sup_{k\in\N}\{\rho^k M_k\||\cd|^{k/2}\hat{f}(\cd)\|^2\}<\infty$. 
\item[(ii)] 
If the log convex sequences $\{M_k\}$ and $\{N_k\}$ satisfy 
$M_k/M_{k-1} < N_{k}/N_{k-1}$ for any $k\in \N$, then 
${\cal H}(\{M_k\}) \subset {\cal H}(\{N_k\})$. 
\item[(iii)] 
$\cH(\{k!\})$, and $\cH(\{k!^\nu\})$ with $\nu>1$ are realanalytic class, and Gevrey class of order $\nu$ respectively. 
(See \cite{Kr}.)
\item[(iv)]
Let ${\cal Q}$ be the set of all log convex sequences $\{M_k\}$ satisfying the quasianalytic condition $\sum M_k/M_{k+1}=\infty$ due to the Denjoy-Carleman theorem. Then ${\cal Q}_{L^2}:=\bigcup_{\{M_k\}\in {\cal Q}}\cH(\{M_k\})$ denotes the quasianalytic class. (See \cite{Kr}.) 
\end{itemize}
\end{Lemm}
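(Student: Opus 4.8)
The plan is to treat the four assertions separately, since they are of very different depth: (i) and (ii) are immediate from the definitions together with the lemmas on associated functions already established, whereas (iii) and (iv) amount to recognising $\cH(\{M_k\})$ as an $L^2$ version of a classically named space.

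For (i) I would simply use that $\mM(r;\{M_k\})=\sup_{k\ge1}r^k/M_k$ dominates each single term $r^k/M_k$; putting $r=|\xi|/\rho$ and integrating against $|\hat f(\xi)|^2$ over $\R^n$ turns the hypothesis $f\in\cH(\rho,\{M_k\})$ into
\begin{equation*}
  \frac{1}{\rho^k M_k}\int_{\R^n}|\xi|^k|\hat f(\xi)|^2\,d\xi
  \le \int_{\R^n}\mM\(\frac{|\xi|}{\rho};\{M_k\}\)|\hat f(\xi)|^2\,d\xi<\infty
\end{equation*}
uniformly in $k\in\N$, which is precisely the claimed bound on $\||\cd|^{k/2}\hat f(\cd)\|^2$. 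For (ii) I would observe that the hypothesis $M_k/M_{k-1}<N_k/N_{k-1}$ for all $k$ is exactly the hypothesis of Lemma \ref{lemma-af2}; by (\ref{lemma-af2-e1}) this gives $\mM(r;\{N_k\})<\mM(r;\{M_k\})$ for every $r>0$, hence $\int_{\R^n}\mM(|\xi|/\rho;\{N_k\})|\hat f(\xi)|^2\,d\xi\le\int_{\R^n}\mM(|\xi|/\rho;\{M_k\})|\hat f(\xi)|^2\,d\xi$. Thus $f\in\cH(\rho,\{M_k\})$ implies $f\in\cH(\rho,\{N_k\})$ with the same $\rho$, and taking the union over $\rho>0$ yields $\cH(\{M_k\})\subset\cH(\{N_k\})$.

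For (iii) I would bring in the Fourier-transform characterisation of the analytic and Gevrey classes: a function lies in the real-analytic (resp. Gevrey-$\nu$) class, understood in the $L^2$ framework, exactly when $\int_{\R^n}e^{c|\xi|}|\hat f(\xi)|^2\,d\xi<\infty$ (resp. $\int_{\R^n}e^{c|\xi|^{1/\nu}}|\hat f(\xi)|^2\,d\xi<\infty$) for some $c>0$; see \cite{Kr}. By (\ref{af-ex1}) with $\si=1$ the associated function satisfies $\mM(r;\{k!^\nu\})\simeq r^{-1/2}\exp(\nu r^{1/\nu})$, so the integrability condition defining $\cH(\rho,\{k!^\nu\})$ matches the one above after a harmless adjustment of the constants, and the two descriptions coincide. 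Assertion (iv) is in effect a definition: the Denjoy--Carleman theorem identifies, among the classes generated by log convex sequences, the quasianalytic ones as those with $\sum M_k/M_{k+1}=\infty$, so $\bigcup_{\{M_k\}\in{\cal Q}}\cH(\{M_k\})$ is the natural $L^2$ analogue of the quasianalytic class.

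The only step that is not completely mechanical is the constant bookkeeping in (iii): one must verify that the polynomial prefactor $r^{-1/2}$ in the asymptotics (\ref{af-ex1}) of $\cM_1$ does not alter the class. I expect this to be routine — near $\xi=0$ the factor $(|\xi|/\rho)^{-1/2}$ is integrable against $|\xi|^{n-1}\,d|\xi|$ for all $n\ge1$, and for large $|\xi|$ it is absorbed by an arbitrarily small increase of the exponent $c$ (equivalently of $\rho$), so replacing $\mM(|\xi|/\rho;\{k!^\nu\})$ by a pure exponential changes neither the finiteness of the integral nor, after the union over $\rho>0$, the resulting space. Everything else reduces to the definitions, Lemma \ref{lemma-af2}, and the example asymptotics computed above.
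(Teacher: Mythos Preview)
Your proposal is correct and matches the paper's intent: the paper does not actually prove this lemma but only remarks that ``by the properties of log convex sequences and the associated functions, we immediately see the following lemma,'' and you have filled in precisely those details using the definition of $\mM$, Lemma~\ref{lemma-af2}, the asymptotics (\ref{af-ex1}), and the Denjoy--Carleman characterisation. One minor point: in (i) your computation yields $\sup_k\{(\rho^k M_k)^{-1}\||\cd|^{k/2}\hat f\|^2\}<\infty$, which is the natural conclusion; the factor $\rho^k M_k$ as printed in the statement appears to be a typo for its reciprocal.
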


Let us consider an extension of $B^{(m)}_\Delta$ as $m\to \infty$ in the ultradifferentiable class $\cH(\{M_k\})$ as follows: 

\begin{Def}
Let $\{M_k\}$ and $\{\tM_k\}$ be log convex sequences, $\eta$ and $\rho$ be positive real numbers. 
We define $G(\rho,\eta,\{M_k\},\{\tM_k\})=G(\rho,\eta,\{M_k\},\{\tM_k\};u_0,u_1)$ as follows: 
\begin{equation*}
  G(\rho,\eta,\{M_k\},\{\tM_k\};u_0,u_1):=\int_{|\xi|\ge \rho}
  \mM\(\frac{|\xi|}{\rho};\left\{\tM_k\right\}\)
  \exp\(\frac{\eta|\xi|}{\mM\(\frac{|\xi|}{\rho};\{M_k\}\)}\)
  \cE(0,\xi)\,d\xi. 
\end{equation*}
Then we define the class of initial data $\cB_\Delta(\{M_k\},\{\tM_k\})$ by 
\begin{equation*}
  \cB_\Delta(\{M_k\},\{\tM_k\}):=\bigcup_{\eta>0}
  \left\{
  (u_0,u_1)\:;\:
  \exists\{\rho_j\}\in {\cal L},\;\;
  \sup_j\left\{G(\rho_j,\eta,\{M_k\},\{\tM_k\};u_0,u_1)\right\}<\infty
  \right\}.
\end{equation*}
\end{Def}
%
\begin{Rem}
$B^{(m)}_\Delta$ can be identified $\cB_\Delta(\{M_k\},\{M_k\})$ for a finite sequence of positive numbers $\{M_k\}_{k=0}^m$. 
Therefore, $\cB_\Delta(\{M_k\},\{\tM_k\})$ is a generalization of the Manfrin's class $B_\Delta^{(m)}$. 
\end{Rem}

The following proposition is a sort generalizations of Lemma \ref{B_Delta^m-prop} (iii) and (iv) for $\cB_\Delta(\{M_k\},\{\tM_k\})$: 
\begin{Prop}\label{Prop-cB}
Let $\{L_k\}$, $\{M_k\}$, $\{\tM_k\}$, $\{N_k\}$ and $\{\tN_k\}$ be log convex sequences. 
Then the following properties are established$:$ 
\begin{itemize}
\item[(i)]
If $(u_0,u_1)\in \cB_\Delta(\{M_k\},\{L_k\})$, then $\pa_{x_1}u_0,\ldots,\pa_{x_n}u_0,u_1\in \cH(\{L_k\})$. 
\item[(ii)]
If $M_k/M_{k-1}<N_k/N_{k-1}$ then $\cB_\Delta(\{N_k\},\{\tM_k\})\subset \cB_\Delta(\{M_k\},\{\tM_k\})$, and if $\tM_k/\tM_{k-1}<\tN_k/\tN_{k-1}$ then $\cB_\Delta(\{M_k\},\{\tM_k\})\subset \cB_\Delta(\{M_k\},\{\tN_k\})$. 
\item[(iii)]
If $L_k/L_{k-1}<\tM_k/\tM_{k-1}$ and $\lim_{k\to\infty}\tM_k/L_k=\infty$, then there exists $\{u_{1,j}\}_{j=1}^\infty$ such that $\{(0,u_{1,j})\}_{j=1}^\infty$ is uniformly bounded in $\cB_\Delta(\{M_k\},\{\tM_k\})$, but $\{u_{1,j}\}_{j=1}^\infty$ is not bounded in $\cH(\{L_k\})$ as $j\to\infty$. 
\item[(iv)] 
If $N_k/M_{k+1}\nearrow \infty$ as $k\to\infty$, then there exists $\{u_{1,j}\}_{j=1}^\infty$ such that 
$\{(0,u_{1,j})\}_{j=1}^\infty$ is uniformly bounded in $\cB_\Delta(\{M_k\},\{\tM_k\})$, but not bounded in $\cB_\Delta(\{N_k\},\{\tN_k\})$ as $j\to\infty$. 
\end{itemize}
\end{Prop}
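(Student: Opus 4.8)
The plan is to prove the four assertions of Proposition \ref{Prop-cB} in sequence, exploiting the associated-function machinery from \S2.1, and in particular Lemmas \ref{lemm-uldclass}, \ref{lemma-af2}, and \ref{lemma-af1}. For part (i), I would start from the definition of $\cB_\Delta(\{M_k\},\{L_k\})$: there exist $\eta>0$ and $\{\rho_j\}\in{\cal L}$ with $G(\rho_j,\eta,\{M_k\},\{L_k\})$ bounded. Fixing one $j$ with $\rho:=\rho_j$, I would drop the exponential factor (which is $\ge 1$ on $|\xi|\ge\rho$ only up to a bounded multiplicative constant, or more honestly is bounded below by $\exp(\eta\rho/\mM(|\xi|/\rho;\{M_k\})$ — I would instead note it is $\ge 1$ when $\mM\le|\xi|$, and handle the finite region $|\xi|\le C\rho$ separately since there $\mM(|\xi|/\rho;\{L_k\})$ is bounded) to get $\int_{|\xi|\ge\rho}\mM(|\xi|/\rho;\{L_k\})\cE(0,\xi)\,d\xi<\infty$. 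Combined with the trivial bound on $|\xi|\le\rho$ (where $\mM(|\xi|/\rho;\{L_k\})\le 1$ since $M_0=M_1=1$ forces $\mM(r;\{L_k\})=r$ for small $r$ and the sup definition gives $\le 1$ for $r\le 1$), and recalling $\cE(0,\xi)\ge\frac12(|\xi|^2|\hat u_0|^2+|\hat u_1|^2)$, this gives $\int\mM(|\xi|/\rho;\{L_k\})|\hat u_1(\xi)|^2\,d\xi<\infty$ and likewise for each $|\xi|^2|\hat u_0|^2=|\widehat{\pa_{x_k}u_0}|^2$ summed over $k$; hence $u_1,\pa_{x_1}u_0,\dots,\pa_{x_n}u_0\in\cH(\rho,\{L_k\})\subset\cH(\{L_k\})$.

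For part (ii), monotonicity of the class in the sequences, the key input is Lemma \ref{lemma-af2}, inequality (\ref{lemma-af2-e1}): if $M_k/M_{k-1}<N_k/N_{k-1}$ then $\mM(r;\{M_k\})>\mM(r;\{N_k\})$ for all $r>0$, so $1/\mM(|\xi|/\rho;\{M_k\})<1/\mM(|\xi|/\rho;\{N_k\})$, whence the exponential weight for the $N$-sequence dominates the one for the $M$-sequence; thus $G(\rho,\eta,\{M_k\},\{\tM_k\})\le G(\rho,\eta,\{N_k\},\{\tM_k\})$ pointwise in $\rho$, giving $\cB_\Delta(\{N_k\},\{\tM_k\})\subset\cB_\Delta(\{M_k\},\{\tM_k\})$. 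The second inclusion is the analogous but easier statement that $\mM(r;\{\tM_k\})\le\mM(r;\{\tN_k\})$ when $\tM_k/\tM_{k-1}<\tN_k/\tN_{k-1}$ (again by (\ref{lemma-af2-e1}) with roles swapped), so the polynomial-type weight $\mM(|\xi|/\rho;\{\tN_k\})$ dominates, giving $G(\rho,\eta,\{M_k\},\{\tM_k\})\le G(\rho,\eta,\{M_k\},\{\tN_k\})$ — wait, I need the reverse direction, so actually $\mM(\cdot;\{\tM_k\})\le\mM(\cdot;\{\tN_k\})$ means $G$ with $\{\tM_k\}$ is $\le$ $G$ with $\{\tN_k\}$, hence membership in $\cB_\Delta(\{M_k\},\{\tM_k\})$ (the smaller $G$) does not obviously imply membership in the class with the larger $G$; I would instead observe that boundedness of the larger $G$ is the stronger condition, so in fact one needs the inequality oriented so that the $\{\tN_k\}$-class has the \emph{smaller} weight. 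Since $\tM_k/\tM_{k-1}<\tN_k/\tN_{k-1}$ gives $\mM(r;\{\tM_k\})>\mM(r;\{\tN_k\})$ by (\ref{lemma-af2-e1}), the $G$ built from $\{\tN_k\}$ is indeed smaller, so $\cB_\Delta(\{M_k\},\{\tM_k\})\subset\cB_\Delta(\{M_k\},\{\tN_k\})$ as claimed.

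For parts (iii) and (iv), the construction is the standard one: choose a sequence of radii $r_j\to\infty$ and let $\hat u_{1,j}$ be (essentially) the indicator of a thin spherical shell around $|\xi|=r_j$, normalized so that the target weighted norm is exactly $1$; concretely set $\hat u_{1,j}(\xi)=c_j\mathbf{1}_{\{r_j\le|\xi|\le r_j+1\}}$ with $c_j$ chosen so that $\int\mM(|\xi|/r_j;\{\tM_k\})\exp(\eta|\xi|/\mM(|\xi|/r_j;\{M_k\}))|\hat u_{1,j}(\xi)|^2\,d\xi=1$ — taking $\rho_j=r_j$ and $\eta$ fixed (say $\eta=1$), and using Lemma \ref{lemma-af1} to control $\mM((r_j+1)/r_j\cdot\text{stuff})$ against $\mM(1;\cdot)=1$ and the boundedness of $|\xi|/r_j\le 1+1/r_j$ on the shell. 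Then $\{(0,u_{1,j})\}$ is uniformly bounded in $\cB_\Delta(\{M_k\},\{\tM_k\})$ by design with the \emph{moving} choice $\rho_j=r_j$. For (iii), one must show $\|u_{1,j}\|$ in $\cH(\{L_k\})$ — for \emph{any} fixed admissible $\rho$ — blows up; this uses that on the shell $|\xi|\approx r_j$, while $\mM(|\xi|/r_j;\{\tM_k\})\approx 1$, the quantity $\mM(|\xi|/\rho;\{L_k\})$ for fixed $\rho$ is $\approx\mM(r_j/\rho;\{L_k\})$, and one compares $\mM(r_j/\rho;\{L_k\})$ against the normalizing factor $\approx\mM(1;\{\tM_k\})\cdot(\text{exponential at }|\xi|\approx r_j,\rho_j=r_j\text{, which is }\approx e^\eta)$; the hypotheses $L_k/L_{k-1}<\tM_k/\tM_{k-1}$ and $\tM_k/L_k\to\infty$ feed Lemma \ref{lemma-af2}, (\ref{lemma-af2-e2}), to show the ratio $\to\infty$. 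For (iv) the argument is parallel but compares two $G$-functionals: with $\rho_j=r_j$ one gets uniform boundedness in $\cB_\Delta(\{M_k\},\{\tM_k\})$, while for $\cB_\Delta(\{N_k\},\{\tN_k\})$ one must beat \emph{every} admissible sequence $\{\rho'_j\}$; the decisive estimate is that the exponential weight $\exp(\eta|\xi|/\mM(|\xi|/\rho;\{N_k\}))$ near $|\xi|=r_j$, with the \emph{best} choice of $\rho$, still produces a factor that grows, and here the hypothesis $N_k/M_{k+1}\nearrow\infty$ is exactly what triggers (\ref{lemma-af2-e3}) of Lemma \ref{lemma-af2}, which gives $\mM(r;\{M_k\})/(r\,\mM(r;\{N_k\}))\to\infty$ — the extra factor of $r$ being precisely what is needed because the exponent carries a factor $|\xi|\approx r_j$ in the numerator.

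The main obstacle, and where I expect to spend most of the effort, is part (iv): one is not comparing the weight at a single common scaling $\rho$, but must show that \emph{no} sequence $\{\rho'_j\}\in{\cal L}$ can keep $G(\rho'_j,\eta',\{N_k\},\{\tN_k\};0,u_{1,j})$ bounded, for any $\eta'>0$. The subtlety is that $\rho'_j$ is allowed to depend on $j$ and could in principle be chosen to sit right at $r_j$, killing the exponent $|\xi|/\mM(|\xi|/\rho'_j;\{N_k\})$ by making $\mM(|\xi|/\rho'_j;\{N_k\})$ large — but making $\rho'_j$ large also enlarges the polynomial-type factor $\mM(|\xi|/\rho'_j;\{\tN_k\})$ relative to the normalization, or else forces $|\xi|/\rho'_j$ close to $1$ where $\mM(|\xi|/\rho'_j;\{N_k\})\approx 1$ so the exponent $\approx\eta'|\xi|\approx\eta' r_j$ explodes. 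Making this dichotomy quantitative — roughly, either $\rho'_j\ll r_j$ and the exponential wins via (\ref{lemma-af2-e3}), or $\rho'_j\gtrsim r_j/C$ and then the exponential factor $e^{\Omega(r_j)}$ wins outright — while tracking the $j$-dependent normalization constant $c_j$ through Lemma \ref{lemma-af1}, is the technical heart of the proof. Parts (i)–(iii) are, by contrast, fairly direct consequences of the monotonicity lemmas once the shell-construction is set up.
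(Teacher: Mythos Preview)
Your treatment of (i) and (ii) is fine and matches the paper, which dispatches them in one line via Lemma~\ref{lemma-af2}.

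The construction for (iii)--(iv), however, has a genuine error. You place the shell at $|\xi|\in[r_j,r_j+1]$ and take $\rho_j=r_j$. On that shell $|\xi|/\rho_j\in[1,1+r_j^{-1}]$, so by (\ref{lemma-af-e2}) one has $\mM(|\xi|/\rho_j;\{M_k\})=|\xi|/\rho_j\approx 1$, and hence the exponential factor is
\[
\exp\!\Big(\frac{\eta|\xi|}{\mM(|\xi|/\rho_j;\{M_k\})}\Big)\approx e^{\eta r_j},
\]
not $e^\eta$ as you assert. Your normalization then forces $c_j^2\approx r_j^{-(n-1)}e^{-\eta r_j}$. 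For (iii) this is fatal: the $\cH(\rho,\{L_k\})$-integral on the shell is $\approx \mM(r_j/\rho;\{L_k\})\,e^{-\eta r_j}$, and since $\log\mM(r;\{L_k\})\le Cr$ (Remark~\ref{rem-af-k!}), this tends to $0$ for every $\rho>C/\eta$. So your sequence is \emph{bounded} in $\cH(\{L_k\})$, the opposite of what is needed.

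The missing idea is that $\rho_j$ and $r_j$ must be decoupled so that $r_j/\rho_j\to\infty$ (to activate (\ref{lemma-af2-e2})--(\ref{lemma-af2-e3})) while simultaneously $r_j/\cM(r_j/\rho_j)$ stays bounded (to keep the exponential $\le e^\eta$). The paper achieves both by the recursion $\rho_{j+1}=r_j+1$, $r_{j+1}=\cM(r_{j+1}/\rho_{j+1})$, which forces $r_j/\cM(r_j/\rho_j)=1$ exactly, and then takes $\hat u_{1,j}$ supported on $[r_j,r_j+1)$ normalized by $\tcM(|\xi|/\rho_j)$ rather than by the full $G$-weight. This recursion also gives, for (iv), the interlacing $[1,\infty)=\bigcup_j[r_{j-1},r_j)$ with $[r_{j-1},r_j)=[\rho_j-1,\rho_{j+1}-1)$, so any competing $\sigma_l$ can be located relative to some $\rho_j$ within an additive constant; combined with Lemma~\ref{lemma-af1} this makes your dichotomy quantitative. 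Without this coupled choice of scales, neither (iii) nor (iv) goes through.
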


\begin{Rem}
We observe from Proposition \ref{Prop-cB} (iv) that $\cB_\Delta(\{M_k\},\{\tM_k\})$ is not included in $B_\Delta^{(m)}$ for any $m\in \N$. 
\end{Rem}
\begin{Thm}\label{MThm}
Let $\{L_k\}$ be a log convex sequence. 
If $(u_0,u_1)\in \cB_\Delta(\{k! L_k\},\{L_k\})$, 
then (\ref{K}) has a global strong solution. 
Moreover, for any $t>0$ there exists $\rho=\rho(t)>0$ such that 
\begin{equation}\label{MThm-e1}
  \int_{\R^n}\mM\(\frac{|\xi|}{\rho};\{L_k\}\) \cE(t,\xi)\,d\xi<\infty. 
\end{equation}
\end{Thm}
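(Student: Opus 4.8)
The plan is to reduce the nonlinear Kirchhoff problem to a linear hyperbolic equation with a time-dependent coefficient $c(t)^2 = 1 + \|\nabla u(t,\cdot)\|^2$, and then to track the evolution of a weighted energy of the form $\int \mM(|\xi|/\rho(t);\{L_k\})\,\cE(t,\xi)\,d\xi$ along an a priori argument. Writing $v(t,\xi) = \hat u(t,\xi)$, each Fourier mode satisfies $v'' + c(t)^2|\xi|^2 v = 0$, and the standard Kirchhoff strategy is: (a) assume a global bound $1 \le c(t)^2 \le C_0$ on a maximal interval; (b) exploit the oscillatory/dispersive gain coming from the fact that $c(t)^2|\xi|^2$ is large for large $|\xi|$ to show that $\|\nabla u(t,\cdot)\|^2 = \int |\xi|^2 |v(t,\xi)|^2 d\xi$ stays bounded, closing the bootstrap; and (c) conclude global existence by the usual continuation argument for the local solution of Bernstein-type. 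The novelty here is purely in how one controls the contribution of the high frequencies: instead of a polynomial weight $(|\xi|/\rho)^m$ as in $B^{(m)}_\Delta$, one uses the associated function $\mM(|\xi|/\rho;\{L_k\})$, and the key algebraic fact one needs is that the "extra" factor $k!$ in the hypothesis $(u_0,u_1)\in \cB_\Delta(\{k!L_k\},\{L_k\})$ is exactly what is consumed by differentiating the weight in $t$ (each time derivative of $\cE$ brings down a factor $\sim c'(t)|\xi|/c(t)$, and iterating $k$ times against $\mM(\cdot;\{L_k\})$ costs a factor that the log-convex machinery — Lemma \ref{lc-prop} (v), Lemma \ref{lemma-af1}, Lemma \ref{lemma-af2} — bounds by $k! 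L_k$).

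Concretely, I would proceed as follows. First, fix $\eta>0$ and a sequence $\{\rho_j\}\in\cL$ realizing membership in $\cB_\Delta(\{k!L_k\},\{L_k\})$, so that $\sup_j G(\rho_j,\eta,\{k!L_k\},\{L_k\};u_0,u_1) =: A < \infty$. Second, set up the linearized energy: for a parameter $\rho$ (to be chosen large, essentially one of the $\rho_j$ depending on $t$) and a small $\mu>0$, define
\begin{equation*}
  E_\rho(t) := \int_{\R^n} \mM\!\(\tfrac{|\xi|}{\rho};\{L_k\}\)\exp\!\(\tfrac{\mu|\xi|}{\mM(|\xi|/\rho;\{k!L_k\})}\)\cE(t,\xi)\,d\xi,
\end{equation*}
so that $E_\rho(0)$ is controlled by $G(\rho,\eta,\{k!L_k\},\{L_k\})$ up to the elementary inequality $\mM(r;\{L_k\})\le \mM(r;\{k!L_k\})$ (since $\{k!L_k\}$ grows faster, Lemma \ref{lemma-af2}) and the fact that $\exp(\mu r/\mM(r;\{k!L_k\}))$ is bounded once $\mu\le\eta$. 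Third, differentiate $E_\rho(t)$ in $t$ under the integral: using $v'' = -c(t)^2|\xi|^2 v$ one gets $\frac{d}{dt}\cE(t,\xi) = (1-c(t)^2)|\xi|^2 \mathrm{Re}(\bar v\, v') $, whose size is $\lesssim |c(t)^2-1|\,|\xi|\,\cE(t,\xi) \lesssim \|\nabla u\|^2 |\xi|\,\cE(t,\xi)$. The dangerous factor $|\xi|$ is then absorbed: on the region $|\xi|\ge\rho$ one has $|\xi| \le \rho\cdot(|\xi|/\rho)$, and the point is that $|\xi|$ times the weight $\mM(|\xi|/\rho;\{L_k\})$ is still dominated, up to a factor $\mM(|\xi|/\rho;\{L_k\})^{1 + \text{small}}$ and harmless constants, by $\mM(|\xi|/\rho;\{k!L_k\})$-type weights — this is where Lemma \ref{lc-prop}(iv)-(v), Lemma \ref{lemma-af1} (shift by a constant) and Lemma \ref{lemma-af2-e3} (the $r\,\mM(r;\{N_k\})$ vs. $\mM(r;\{M_k\})$ comparison with $N_k = k!L_k$, $M_{k+1}$ relation) do the work. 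One derives a differential inequality of the form $E_\rho'(t) \le C \|\nabla u(t,\cdot)\|^2 E_\rho(t)$ plus a correction from the $\rho$-dependence that is favorable (larger $\rho$ only decreases the weight pointwise for fixed $\xi$), hence $E_\rho(t) \le E_\rho(0)\exp(C\int_0^t \|\nabla u\|^2\,ds)$. Fourth, since $\|\nabla u(t,\cdot)\|^2 \le E_\rho(t)$ (the weight dominates $|\xi|^2$ for $|\xi|\ge\rho$, and the low-frequency part $|\xi|<\rho$ contributes a fixed constant times the conserved $H^1\times L^2$-type energy), one obtains a Gronwall-type closed inequality for $y(t):=\int_0^t\|\nabla u\|^2 ds$, namely $y'(t) \le C_1 + C_2 e^{C_3 y(t)}$; but this alone only gives local control, so the standard Kirchhoff refinement is needed: one splits the frequency integral at a threshold depending on $\rho$ and uses the decay of the oscillatory integral $\int e^{\pm i\int c|\xi|}(\cdots)$ for $|\xi|\ge\rho$ — large frequencies oscillate fast, so their contribution to $\|\nabla u\|^2$ is integrable in $t$ — which upgrades the bound to a global one by absorbing $\rho$ into the argument and letting $j\to\infty$ along $\{\rho_j\}$.

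The main obstacle — and the heart of the paper — is Step three: establishing the differential inequality with a constant $C$ that is genuinely uniform in the frequency parameter $\rho$ (which must be taken $\to\infty$) and in $t$ on the whole half-line, not just locally. This requires the precise statement that multiplying the weight $\mM(\cdot;\{L_k\})$ by $|\xi|$ (equivalently, shifting the index of the sequence and the associated function, cf. Lemma \ref{lemma-af1} and the $k!$-gap quantified in Lemma \ref{lemma-af2-e3}) costs no more than a bounded factor times passing to the weight $\exp(\cdot/\mM(\cdot;\{k!L_k\}))$ built into $E_\rho$ — in other words, the $k!$ surplus in $\{k!L_k\}$ versus $\{L_k\}$ is exactly calibrated to pay for one factor of frequency per time-differentiation, while the exponential factor $\exp(\mu|\xi|/\mM(|\xi|/\rho;\{k!L_k\}))$ (which by Remark \ref{rem-af-k!} is at worst $e^{C\rho}$-ish near $|\xi|\sim\rho$ but tends to $1$ away from it) supplies the remaining slack uniformly. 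Once this calibration lemma is in hand the rest is the by-now-classical Kirchhoff bootstrap of Manfrin and of \cite{H06}, adapted verbatim with $r^m$ replaced by $\mM(r;\{L_k\})$; I would carry over those estimates with only notational changes, so I expect Steps one, two, and four to be essentially routine given the lemmas in Section 2.
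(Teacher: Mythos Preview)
Your proposal has a genuine gap: it misidentifies both the origin of the factor $k!$ and, more importantly, the central analytical tool.

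First, the $k!$ in the hypothesis $(u_0,u_1)\in\cB_\Delta(\{k!L_k\},\{L_k\})$ does \emph{not} arise from ``differentiating the weight in $t$'' or from iterating $\frac{d}{dt}\cE\lesssim |c'||\xi|\cE$. In the paper it comes from the combinatorics of differentiating the nonlinear coefficient $\Phi(t)=1+\|\nabla u(t,\cdot)\|^2$ many times (Lemma~\ref{Phik} and Lemma~\ref{estPhik}): one shows
\[
  \bigl|\Phi^{(k)}(t)\bigr|\le \nu_0^k\,k!\max_{|h|=k}\prod_{l=1}^k\int|\xi|^{h_l}\cE(t,\xi)\,d\xi,
\]
and then, via Lemma~\ref{Lemm_sqrt_f}, $a(t)=\sqrt{\Phi(t)}$ satisfies $|a^{(k)}(t)|\le \mu_0^k M_k$ with $M_k=k!L_k$. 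This is the only place $k!$ enters.

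Second, the heart of the argument is \emph{not} a differential inequality of the form $E_\rho'(t)\le C\|\nabla u\|^2 E_\rho(t)$; as you yourself note, that only gives local control and does not close. The paper instead proves a \emph{pointwise} microlocal estimate (Proposition~\ref{PropL}) via an $m$-step refined diagonalization of the first-order system attached to $v''+a(t)^2|\xi|^2v=0$. After $m$ iterations the off-diagonal entry $b_m$ lies in a symbol class of order $|\xi|^{-m+1}$ (Lemma~\ref{est-bk-th_k}), and Gronwall applied to the diagonalized system yields
\[
  \cE(t,\xi)\le C_0\,\cE(T_0,\xi)\exp\bigl((T-T_0)(\ka\mu_0)^m M_m\,|\xi|^{-m+1}\bigr)
  \quad\text{on } |\xi|\ge \rho\,\tfrac{M_m}{M_{m-1}},
\]
uniformly in $m$. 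Since by (\ref{lemma-af-e2}) the associated function equals $\cM(r)=r^m/M_m$ precisely on $[M_m/M_{m-1},M_{m+1}/M_m)$, the exponent on that band is exactly $(T-T_0)|\xi|/\cM(|\xi|/\rho)$, which is what the exponential weight in $G(\rho,\eta,\{k!L_k\},\{L_k\})$ absorbs. Summing over all bands then bounds $\int\tcM(|\xi|/\rho)\cE(t,\xi)\,d\xi$, and a contradiction argument against a putative finite maximal time $T$ (Section~4.3) finishes.

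Your Step three cannot be carried out as written: the claim that $|\xi|\cdot\mM(|\xi|/\rho;\{L_k\})$ is dominated by the weight already present in $E_\rho$ via Lemmas~\ref{lemma-af1}--\ref{lemma-af2} is not correct (those lemmas compare different associated functions, they do not let you absorb an unbounded factor $|\xi|$ into the same weight with a uniform constant), and your fallback ``decay of the oscillatory integral'' is not the mechanism used here. You need the refined diagonalization and the accompanying symbol calculus of Section~3; without them there is no uniform-in-$m$ pointwise estimate and the bootstrap does not close.
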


\begin{Rem}\label{rem-strongsol}
(\ref{MThm-e1}) provides $\||\cd|^{k}\hat{u}_t(t,\cd)\|<\infty$ for any $t\in[0,\infty)$ and any $k=0,1,\ldots$, which implies that $u(t,x)$ is a strong solution of (\ref{K}). 
\end{Rem}
%
\begin{Rem}
Noting Proposition \ref{Prop-cB} (i), $\{L_k\}$ should satisfy the non-quasianalytic condition: $\sum L_k/L_{k+1}<\infty$; 
otherwise $\cB_\Delta(\{k! L_k\},\{L_k\})$ is included in Nishihara's quasianalytic class. 
\end{Rem}

\begin{Rem}
If $\{M_k\}$ is log convex, then the associated function $\mM(r;\{M_k\})$ is convex by the representation (\ref{lemma-af2-e2}).  
Therefore, the weight functions which define the ultradifferentiable class $\cH(\{M_k\})$ are convex; thus the quasianalytic class ${\cal Q}_{L^2}$ due to \cite{Ni} is defined by convex weight functions. 
Recently, Ghisi-Gobbino \cite{GG} proved the global solvability in a wider class of Nishihara's quasianalytic class. 
Briefly, their class is defined by monotone increasing weight functions satisfying a sort of quasianalytic condition, but not necessary to be convex. 
On the other hand, $\cB_\Delta(\{M_k\},\{\tM_k\})$ cannot be defined by monotone increasing weight functions. 
At present, \cite{GG} and Theorem \ref{MThm} provide the widest classes for the global solvability of (\ref{K}) with large data. 
\end{Rem}

\section{Estimates of a linearized problem}
\subsection{Linear wave equation with time dependent coefficient}
Let us consider the following Cauchy problem of the wave equation with time dependent coefficient: 
\begin{equation}\label{Lu}
\begin{cases}
  \pa_t^2 u(t,x)-a(t)^2 \Delta u(t,x)=0,
  \quad
  (t,x)\in(0,T)\times \R^n,
\\
  u(0,x)=u_0(x),
  \;\;
  (\pa_t u)(0,x)=u_1(x),
  \quad
  x\in \R^n, 
\end{cases}
\end{equation}
where $a(t)\in C^\infty([0,T))$ satisfies 
\begin{equation}\label{a0}
  1\le a(t) \le a_1 
\end{equation}
for a positive constant $a_1$. 
If the local solution of (\ref{K}) satisfies (\ref{MThm-e1}) on $[0,T)$, 
then $\Phi(t):=1+\|\nabla u(t,\cd)\|^2 \in C^\infty([0,T))$; thus (\ref{Lu}) is a linearized problem of (\ref{K}) adopting $a(t)=\sqrt{\Phi(t)}$. 

By partial Fourier transformation with respect to $x\in \R^n$, 
(\ref{Lu}) is reduced to the following problem: 
\begin{equation}\label{L}
\begin{cases}
  \pa_t^2 v(t,\xi)+a(t)^2|\xi|^2 v(t,\xi)=0,
  \quad
  (t,\xi)\in(0,T)\times \R^n,
\\
  v(0,\xi)=\hat{u}_0(\xi),
  \;\; 
  (\pa_t v)(0,\xi)=\hat{u}_1(\xi),
  \quad
  \xi\in \R^n,
\end{cases}
\end{equation}
where $v(t,\xi)=\hat{u}(t,\xi)$. 
We define the energy functional $\cE(t,\xi)=\cE(t,\xi;v)$ by 
\begin{equation}\label{cE}
  \cE(t,\xi;v):=\frac12\(|\xi|^2|v(t,\xi)|^2+|\pa_t v(t,\xi)|^2\). 
\end{equation}
The main purpose of this section is to prove the following proposition: 
\begin{Prop}\label{PropL}
Let $a(t)\in C^\infty([0,T))$ satisfy $(\ref{a0})$. 
If there exist a log convex sequence $\{M_k\}$ 
and a positive constant $\mu_0$ such that 
\begin{equation}\label{PropL-e1}
  \left|a^{(k)}(t)\right|\le \mu_0^k M_k
\end{equation}
for any $k\in \N$, 
then there exist positive constants $\ka_0$ and $C_0$ such that 
for any $\ka\ge \ka_0$ and any $m\in \N$ 
the following estimate is established$:$
\begin{equation}\label{PropL-e2}
  \cE(t,\xi) \le C_0\cE(T_0,\xi)\exp\((T-T_0)(\ka\mu_0)^m M_m |\xi|^{-m+1}\)
\end{equation}
for $0\le T_0 < t < T$ and $|\xi|\ge \ka\mu_0 M_m/M_{m-1}$. 
\end{Prop}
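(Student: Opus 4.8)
The plan is to run the standard ``diagonalization / zone decomposition'' argument for wave equations with time-dependent coefficients, but with the number of diagonalization steps depending on the frequency $|\xi|$. Fix $m\in\N$ and a frequency $\xi$ with $|\xi|\ge \ka\mu_0 M_m/M_{m-1}$. Working with the first-order system for $(|\xi|v,\pa_t v)$, I would perform a finite sequence of elementary symmetrizing/diagonalizing transformations $V_0,V_1,\dots,V_m$, where at step $j$ one kills the off-diagonal remainder of order $|\xi|^{-(j-1)}$ at the cost of introducing a derivative $a^{(j)}(t)$ and a new remainder of order $|\xi|^{-j}$. Each transformation $I+R_j(t,\xi)$ is invertible precisely because $\|R_j\|$ is controlled by a quantity like $\mu_0^j M_j|\xi|^{-j}$, which by the hypothesis $|\xi|\ge \ka\mu_0 M_m/M_{m-1}$ and the log-convexity monotonicity $M_j/M_{j-1}\le M_m/M_{m-1}$ for $j\le m$ (Lemma \ref{lc-prop} (i)) stays below, say, $1/2$ once $\ka\ge\ka_0$. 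After $m$ steps the system reads $\pa_t W = (\i|\xi|D(t) + B(t,\xi))W$ with $D$ diagonal real and the residual matrix $B(t,\xi)$ bounded by $C(\ka\mu_0)^m M_m|\xi|^{-m+1}$ — this is the one place where a genuine factor $|\xi|^{+1}$ (rather than a negative power) appears, coming from the $|\xi|^2$ in the equation multiplied by the final $|\xi|^{-m-1}$-size remainder.

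Next I would track how the transformed energy $|W|^2$ relates to the original $\cE(t,\xi)$. Since all the $I+R_j$ and their inverses have norm in $[1/2,2]$ uniformly (again using the smallness of each $R_j$), the product of the $m$ transformations has norm bounded by a fixed constant: this is the delicate bookkeeping point, because a naive bound would give $2^m$, which is fatal. The fix is that $\|R_j\|\le (\ka\mu_0)^j M_j|\xi|^{-j}\le (\ka\mu_0\cdot M_{j}/M_{j-1}\cdot|\xi|^{-1})^{\,?}$ — more precisely one shows $\prod_{j=1}^m(1+\|R_j\|)$ is a convergent-type product because $\sum_j \|R_j\|$ is dominated by a geometric series with ratio $\le 1/2$ under $|\xi|\ge\ka\mu_0 M_m/M_{m-1}$ and $\ka$ large, using $M_j/M_{j-1}\le M_m/M_{m-1}$. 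Hence $C_0^{-1}\cE(t,\xi)\le|W(t,\xi)|^2\le C_0\cE(t,\xi)$ with $C_0$ independent of $m$, $\xi$, $t$.

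Then Gronwall's inequality applied to $\frac{d}{dt}|W|^2 \le 2\|B(t,\xi)\||W|^2$ on $[T_0,t]\subset[0,T)$ yields
\begin{equation*}
  |W(t,\xi)|^2 \le |W(T_0,\xi)|^2 \exp\!\(2\!\int_{T_0}^t \|B(s,\xi)\|\,ds\) \le |W(T_0,\xi)|^2 \exp\!\((T-T_0)\,C(\ka\mu_0)^m M_m|\xi|^{-m+1}\),
\end{equation*}
and translating back through the two-sided equivalence with $\cE$ gives (\ref{PropL-e2}), after renaming the constant in front of $(\ka\mu_0)^m$ into $\ka$ itself (i.e.\ absorbing $C$ by enlarging $\ka_0$, using that $\|a^{(k)}\|\le\mu_0^kM_k$ and $1\le a\le a_1$ control the symbol of $D$ and $B$). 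The main obstacle I expect is the uniform-in-$m$ control of the product of diagonalizers: one must choose $\ka_0$ once and for all so that at every level $j\le m$ simultaneously the remainder $R_j$ is small, which is exactly why the threshold is phrased as $|\xi|\ge\ka\mu_0 M_m/M_{m-1}$ (the largest of the ratios $M_j/M_{j-1}$) rather than level-by-level; verifying that the recursively generated remainders genuinely obey $\|R_j\|\lesssim(\ka\mu_0)^jM_j|\xi|^{-j}$ requires Lemma \ref{lc-binomial} to handle the Leibniz-type sums $\sum_i\binom{j}{i}a^{(i)}\cdot(\text{lower remainder})^{(j-i)}$ that arise when differentiating $R_{j-1}$.
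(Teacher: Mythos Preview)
Your outline is the paper's strategy: an $m$-fold iterated diagonalization of the first-order system, uniform-in-$m$ control of the product of diagonalizers, and Gronwall on the final remainder $b_m$. Two points in your sketch are genuinely incomplete, though, and they are exactly where the paper spends its effort.

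First, your geometric-series argument has a circularity as written: you place the \emph{same} constant $\ka\mu_0$ in the remainder bound $\|R_j\|\le(\ka\mu_0)^jM_j|\xi|^{-j}$ and in the threshold $|\xi|\ge\ka\mu_0 M_m/M_{m-1}$, so $\ka$ cancels and you are left with $\|R_j\|\le M_j(M_{m-1}/M_m)^j$, whose successive ratios $(M_j/M_{j-1})/(M_m/M_{m-1})$ tend to $1$ as $j\to m$ and are not bounded by $1/2$. The paper separates the two constants: the recursion yields $|\th_j|\le(\nu\mu)^jM_j|\xi|^{-j}$ with $\nu\mu$ a \emph{fixed} number depending only on $\mu_0$ and combinatorial constants (this is Lemma~\ref{est-bk-th_k}), while the frequency cutoff is $|\xi|\ge\rho\,M_m/M_{m-1}$ with $\rho$ chosen much larger than $\nu\mu$. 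Then $M_j(M_{m-1}/M_m)^j\le 1$ together with the extra factor $(\nu\mu/\rho)^j$ gives the genuinely geometric $|\th_j|\le 4^{-j}$.

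Second, and more substantively, you do not address the denominator in the recursion. In the paper's scheme $\th_{k+1}$ is built from $b_{k+1}/\phi_{(k+1)\Im}$, and one must show that $\phi_{(k+1)\Im}$ stays uniformly comparable to $\phi_{1\Im}=a(t)|\xi|$ through all $k\le m$; otherwise the inductive bound on $\th_{k+1}$ does not close. This is not a triviality: $\phi_{(k+1)\Im}$ is a nested product/sum in the $\al_l$ and $\be_l$ (see the proof of Lemma~\ref{est-bk-th_k}), and the paper devotes a separate estimate to show $\phi_{1\Im}/\phi_{(k+1)\Im}\lesssim 2\si^{(m-k-1)}\{0,0\}$. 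The symbol-class machinery $S^{(p)}\{q,r,K\}$ of Lemmas~\ref{symbol-p1}--\ref{symbol-p3} is precisely what makes this double induction (on both $\th_k$ and $\phi_{k\Im}$) manageable; your Leibniz/Lemma~\ref{lc-binomial} remark covers the products, but the denominator control is an additional step you have not accounted for.
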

\begin{Rem}
The corresponding estimates of (\ref{PropL-e2}) to the linear problem (\ref{L}) with $a(t)\in C^m([0,\infty))$ were proved in \cite{H06, M05}; however, these estimates has no meaning for the asymptotic as $m\to\infty$. 
\end{Rem}

The proof of Proposition \ref{PropL} consists of  three parts; 
reduction to first order system and diagonalization, 
symbol calculus, and these applications for the estimate (\ref{PropL-e2}). 
The original idea of the proof was introduced in \cite{HI13}, and the following proof is a modification of it. 
\subsection{Refined diagonalization} 
Let $v=v(t,\xi)$ be the solution of (\ref{L}), and define 
$V_1={}^t(v_t+\i a(t)|\xi|v , v_t -\i a(t)|\xi|v)$. 
Then the equation of (\ref{L}) is reduced to the following first order system:
\begin{equation}\label{V1}
\pa_t V_1=A_1 V_1,
\quad
A_1=\begin{pmatrix} \phi_1 & \ol{b_1} \\ b_1 & \ol{\phi_1} \end{pmatrix},
\end{equation}
where
\begin{equation*}
  b_1=-\frac{a'(t)}{2a(t)}
  \;\;\text{ and }\;\;
  \phi_1=\frac{a'(t)}{2a(t)}+\i a(t)|\xi|.
\end{equation*}
Let $\la_1$ and $\ol{\la_1}$ be the eigenvalues of $A_1$ represented by 
\begin{equation*}
  \la_1=\phi_{1\Re}+\i\sqrt{\phi_{1\Im}^2-|b_1|^2}, 
\end{equation*}
where we denote $\Re\phi=\phi_\Re$ and $\Im\phi=\phi_\Im$. 
Then the corresponding eigenvectors are given by ${}^t(1,\th_{1})$ and ${}^t(\ol{\th_{1}},1)$, where 
\begin{equation*}
  \th_{1}
 =\frac{\la_1-\phi_1}{\ol{b_1}}
 =-\i\frac{\phi_{1\Im}}{\ol{b_1}}
  \(1-\sqrt{1-\frac{|b_1|^2}{\phi_{1\Im}^2}}\). 
\end{equation*}
Thus $A_1$ is diagonalized by the diagonalizer $\Th_1$ as follows: 
\begin{equation*}
  \Th_1^{-1}A_1 \Th_1 =
  \begin{pmatrix} \la_{1} & 0 \\ 0 & \ol{\la_{1}} \end{pmatrix},
  \;\text{ where }\;
  \Th_1=\begin{pmatrix} 1 & \ol{\th_{1}}\\ \th_{1} & 1\end{pmatrix}
\end{equation*}
since $|\th_1|<1$. 
Therefore, denoting $V_2=\Th_1^{-1}V_1$, (\ref{V1}) is reduced to the following system: 
\begin{equation}\label{V2}
  \pa_t V_2 = A_2 V_2,
\end{equation}
where
\begin{align*}
  A_2=\begin{pmatrix} \la_1 & 0 \\ 0 & \ol{\la_1} \end{pmatrix}
      -\Th_1^{-1}(\pa_t \Th_1)
     =\begin{pmatrix} \phi_2 & \ol{b_2} \\ b_2 & \ol{\phi_2} \end{pmatrix},
\end{align*}
\begin{align*}
  b_2=-\frac{(\th_1)_t}{1-|\th_1|^2}
  \;\; \text{ and } \;\;
  \phi_2=\la_1+\frac{\ol{\th_1}(\th_1)_t}{1-|\th_1|^2}. 
\end{align*}
Generally, the diagonalization procedure above is represented by the following lemma: 
\begin{Lemm}\label{r-diag}
Let $V_k$ be a solution to the following system$:$ 
\begin{equation*}
  \pa_t V_k = A_k V_k,
  \quad
  A_k=\begin{pmatrix} \phi_k & \ol{b_k} \\ b_k & \ol{\phi_k} \end{pmatrix}, 
\end{equation*}
and $\Th_k$ be the diagonalizer of $A_k$ defined by 
\begin{equation*}
  \Th_k:=\begin{pmatrix}
    1 & \ol{\th_k}\\ \th_k & 1
  \end{pmatrix}
  \; \text{ and } \;
  \th_k:=-\i\frac{\phi_{k\Im}}{\ol{b_k}}
  \(1-\sqrt{1-\frac{|b_k|^2}{\phi_{k\Im}^2}}\).
\end{equation*}
If $|\th_k|<1$, then $V_{k+1}=\Th_k^{-1} V_k$ solves the following system$:$ 
\begin{align*}
  \pa_t V_{k+1} = A_{k+1} V_{k+1},
  \quad
  A_{k+1}=\begin{pmatrix} \phi_{k+1} & \ol{b_{k+1}} \\ 
  b_{k+1} & \ol{\phi_{k+1}} \end{pmatrix}, 
\end{align*}
where
\begin{equation}\label{b_k+1}
  b_{k+1}=-\frac{(\th_k)_t}{1-|\th_k|^2},
\end{equation}
\begin{equation}\label{phi_k+1-R}
  \phi_{(k+1)\Re}=\phi_{k\Re} - \pa_t \log\sqrt{1-|\th_k|^2}
\end{equation}
and
\begin{equation}\label{phi_k+1-I}
  \phi_{(k+1)\Im}=
  \sqrt{\phi_{k\Im}^2-|b_k|^2}-\Im\{\ol{\th_k}b_{k+1}\}.
\end{equation}
\end{Lemm}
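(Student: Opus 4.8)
The statement is the generic inductive step of the ``refined diagonalization'' procedure, so the plan is to verify directly that conjugating the system $\pa_t V_k = A_k V_k$ by the stated diagonalizer $\Th_k$ produces a system of the \emph{same structural form}, and then read off the formulas \eqref{b_k+1}--\eqref{phi_k+1-I}. First I would record the algebraic facts about $\th_k$: since $A_k$ has the Hermitian-symmetric off-diagonal structure $\begin{pmatrix}\phi_k & \ol{b_k}\\ b_k & \ol{\phi_k}\end{pmatrix}$, its eigenvalues are $\la_k = \phi_{k\Re} + \i\sqrt{\phi_{k\Im}^2 - |b_k|^2}$ and $\ol{\la_k}$ (this requires $\phi_{k\Im}^2 \ge |b_k|^2$, which is implicit in the hypothesis $|\th_k|<1$ together with the definition of $\th_k$), with eigenvectors ${}^t(1,\th_k)$ and ${}^t(\ol{\th_k},1)$. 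One checks $\la_k - \phi_k = \i\sqrt{\phi_{k\Im}^2-|b_k|^2} - \i\phi_{k\Im} = \ol{b_k}\,\th_k$, which is exactly the defining relation for $\th_k$, so ${}^t(1,\th_k)$ is genuinely an eigenvector; the second eigenvector follows by the symmetry $(\phi,b)\mapsto(\ol\phi,b)$ of the matrix. Because $|\th_k|<1$, the matrix $\Th_k$ is invertible with $\Th_k^{-1} = \frac{1}{1-|\th_k|^2}\begin{pmatrix} 1 & -\ol{\th_k}\\ -\th_k & 1\end{pmatrix}$.

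Next I would compute $A_{k+1} = \Th_k^{-1}A_k\Th_k - \Th_k^{-1}(\pa_t\Th_k)$, which is the standard transformation law when one substitutes $V_{k+1} = \Th_k^{-1}V_k$ into $\pa_t V_k = A_k V_k$. The first term is diagonal, $\mathrm{diag}(\la_k,\ol{\la_k})$, by the eigenvector computation above. For the second term, $\pa_t\Th_k = \begin{pmatrix} 0 & \ol{(\th_k)_t}\\ (\th_k)_t & 0\end{pmatrix}$, so
\begin{equation*}
  \Th_k^{-1}(\pa_t\Th_k) = \frac{1}{1-|\th_k|^2}
  \begin{pmatrix} -\ol{\th_k}(\th_k)_t & \ol{(\th_k)_t}\\
  (\th_k)_t & -\th_k\ol{(\th_k)_t}\end{pmatrix}.
\end{equation*}
Subtracting, the $(2,1)$ entry of $A_{k+1}$ is $b_{k+1} = -(\th_k)_t/(1-|\th_k|^2)$, giving \eqref{b_k+1}, and by inspection the $(1,2)$ entry is its conjugate $\ol{b_{k+1}}$, while the diagonal entries are conjugates of each other — so $A_{k+1}$ has the required form $\begin{pmatrix}\phi_{k+1} & \ol{b_{k+1}}\\ b_{k+1}&\ol{\phi_{k+1}}\end{pmatrix}$ with $\phi_{k+1} = \la_k + \ol{\th_k}(\th_k)_t/(1-|\th_k|^2)$ (matching the displayed $\phi_2$ when $k=1$).

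Finally I would extract the real and imaginary parts of $\phi_{k+1}$. For the real part, $\Re\la_k = \phi_{k\Re}$, and
\begin{equation*}
  \Re\frac{\ol{\th_k}(\th_k)_t}{1-|\th_k|^2}
  = \frac{\Re(\ol{\th_k}(\th_k)_t)}{1-|\th_k|^2}
  = \frac{\frac12\pa_t|\th_k|^2}{1-|\th_k|^2}
  = -\pa_t\log\sqrt{1-|\th_k|^2},
\end{equation*}
which is \eqref{phi_k+1-R}. For the imaginary part, $\Im\la_k = \sqrt{\phi_{k\Im}^2-|b_k|^2}$, and writing $\ol{\th_k}(\th_k)_t/(1-|\th_k|^2) = -\ol{\th_k}b_{k+1}$ using \eqref{b_k+1}, its imaginary part is $-\Im\{\ol{\th_k}b_{k+1}\}$, yielding \eqref{phi_k+1-I}. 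I expect no serious obstacle here — the result is essentially a bookkeeping lemma — but the one point requiring care is the well-definedness: one must confirm that the hypothesis $|\th_k|<1$ is compatible with (indeed forces) $\phi_{k\Im}^2 \ge |b_k|^2$ so that $\th_k$, $\la_k$ and the square roots are real/meaningful, and that $1-|\th_k|^2 \ne 0$ so that $\Th_k^{-1}$ and all the quotients exist; this is what keeps the induction from breaking down and is the only place the hypothesis is genuinely used.
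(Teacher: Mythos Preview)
Your proposal is correct and follows exactly the route the paper takes: the paper's own proof merely records the eigenvalue $\la_k=\phi_{k\Re}+\i\sqrt{\phi_{k\Im}^2-|b_k|^2}$ and the eigenvector relation $\th_k=(\la_k-\phi_k)/\ol{b_k}$, then declares the rest ``straightforward.'' You have simply written out that straightforward computation in full, including the explicit form of $\Th_k^{-1}(\pa_t\Th_k)$ and the extraction of real and imaginary parts, so there is nothing to add.
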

\begin{proof}
The proof is straightforward if we note that the eigenvalues $\{\la_k,\ol{\la_k}\}$, and their corresponding eigenvectors $\{{}^t(1,\th_{k}),{}^t(\ol{\th_{k}},1)\}$ of $A_k$ are represented by 
\begin{equation*}
  \la_k=\phi_{k\Re}+\i\sqrt{\phi_{k\Im}^2-|b_k|^2}
\end{equation*}
and
\begin{equation*}
  \th_{k}
 =\frac{\la_k-\phi_k}{\ol{b_k}}
 =-\i\frac{\phi_{k\Im}}{\ol{b_k}}
  \(1-\sqrt{1-\frac{|b_k|^2}{\phi_{k\Im}^2}}\). 
\end{equation*}
\end{proof}

By applying Lemma \ref{r-diag} successively, (\ref{V1}) is reduced to the following equation: 
\begin{equation}\label{Vm}
  \pa_t V_m = A_k V_m,
  \quad
  A_m=\begin{pmatrix} \phi_m & \ol{b_m} \\ b_m & \ol{\phi_m} \end{pmatrix}. 
\end{equation}
However, the diagonalization procedure by Lemma \ref{r-diag} is only formal because the invertibility of the diagonalizer $\Th_k$ are not ensured. 
We shall consider this problem in the next section to introduce some symbol classes. 

\subsection{Symbol classes}
Let $\{M_k\}$ be a log convex sequence and $\rho$ a positive constant. 
For $m\in \N$ we define $Z_H(m,\rho)$ by 
\begin{equation*}
  Z_H(m,\rho):=\left\{(t,\xi)\in [0,\infty)\times \R^n\:;\:
  |\xi|\ge \rho \frac{M_m}{M_{m-1}} \right\}. 
\end{equation*}
Let $\mu$ be a positive constant to be defined by (\ref{mu}). 
For integers $p$, $q$, $r$ satisfying $0\le p \le m$, 
and a positive real number $K$, we define the symbol classes $S^{(p)}\{q,r,K\}$ as the set of all functions satisfying 
\begin{equation*}
  \left|\pa_t^k f(t,\xi)\right|\le
  K \frac{\mu^{r+k}M_{r+k} }{(r+k+1)^2}
  |\xi|^q
  \;\;(k=0,\ldots,p)
\end{equation*}
in $Z_H(m,\rho)$. 
In particular, we denote $S^{(p)}\{q,r,1\}=S^{(p)}\{q,r\}$ without any confusion. 

We immediately see that $S^{(p_1)}\{q,r,K\}\subset S^{(p_2)}\{q,r,K\}$ for $p_1>p_2$ from the definition. 
Moreover, we have the following properties: 

\begin{Lemm}\label{symbol-p1}
The following properties are established in $Z_H(m,\rho)$ for $\ka_1=3\pi^2:$ 
\begin{itemize}
\item[(i)]
 If $f\in S^{(p)}\{q,r,K\}$ and $p\ge 1$, then $\pa_t f \in S^{(p-1)}\{q,r+1,K\}$. 

\item[(ii)] 
If $f_1 \in S^{(p)}\{q,r,K_1\}$ and $f_2 \in S^{(p)}\{q,r,K_2\}$, then 
$f_1 + f_2 \in S^{(p)}\{q,r,K_1+K_2\}$. 

\item[(iii)]
 If $f \in S^{(p)}\{q,r,K_1\}$ and $K_2>0$, then $K_2 f\in S^{(p)}\{q,r,K_1K_2\}$. 

\item[(iv)]
If $f_1\in S^{(p)}\{q_1,r_1,K_1\}$ and $f_2\in S^{(p)}\{q_2,r_2,K_2\}$, 
then $f_1f_2\in S^{(p)}\{q_1+q_2,r_1+r_2,\ka_1 K_1K_2\}$ 
for $r_1,r_2\ge 0$. 
In particular, if $f_2$ is independent of $t$, then 
$f_1f_2\in S^{(p)}\{q_1+q_2,r_1+r_2,K_1K_2\}$. 

\item[(v)]
If $f\in S^{(p)}\{q,r,K\}$, then 
$f\in S^{(\min\{p,m-r\})}\{q+l,r-l,K(\mu/\rho)^l\}$ for $1\le l\le r\le m$. 

\item[(vi)]
If $f_1\in S^{(p)}\{q,r,K_1\}$ and $f_2\in S^{(p)}\{-r,r,K_2\}$, 
then $f_1f_2\in S^{(p)}\{q,r,\ka_1 K_1K_2(2\mu/\rho)^{r}\}$ 
for $p\le m$ and $1\le r\le m$. 
\end{itemize}
\end{Lemm}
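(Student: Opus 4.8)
The plan is to verify each of the six items by direct substitution into the definition of the symbol classes $S^{(p)}\{q,r,K\}$, using repeatedly the log-convexity estimates of Lemma \ref{lc-prop} and especially the binomial-type inequalities of Lemma \ref{lc-binomial}, together with the elementary numerical fact that $\sum_{j=0}^{\infty}(j+1)^{-2}=\pi^2/6$, which is the source of the constant $\ka_1=3\pi^2=18\cdot(\pi^2/6)$. Items (ii) and (iii) are immediate from linearity of $\pa_t^k$ and the definition. For (i), differentiating the bound for $f\in S^{(p)}\{q,r,K\}$ shifts the index $r\mapsto r+1$ and loses one order of differentiability, $p\mapsto p-1$; one only needs $M_{r+k+1}/(r+k+2)^2$ in place of $M_{(r+1)+k'}/((r+1)+k'+1)^2$ with $k'=k$, so the two sides match exactly after relabeling.

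For the product rule (iv) I would expand $\pa_t^k(f_1f_2)=\sum_{j=0}^{k}\binom{k}{j}(\pa_t^jf_1)(\pa_t^{k-j}f_2)$, insert the two hypothesized bounds, and factor out $K_1K_2\mu^{r_1+r_2+k}|\xi|^{q_1+q_2}$; what remains is
\begin{equation*}
  \sum_{j=0}^{k}\binom{k}{j}\frac{M_{r_1+j}M_{r_2+k-j}}{(r_1+j+1)^2(r_2+k-j+1)^2},
\end{equation*}
and I would bound $M_{r_1+j}M_{r_2+k-j}\le M_{r_1+r_2+k}$ by Lemma \ref{lc-prop}(ii), so that after pulling $M_{r_1+r_2+k}$ out I am left with a purely numerical sum $\sum_j\binom{k}{j}/((r_1+j+1)^2(r_2+k-j+1)^2)$; using $(r_1+j+1)^2(r_2+k-j+1)^2\ge (r_1+r_2+k+2)^2$ when... actually the clean route is to drop $\binom{k}{j}$ against the individual denominators is too lossy, so instead I keep $\binom{k}{j}\frac{M_{r_1+j}M_{r_2+k-j}}{M_{r_1+r_2+k}}\le 1$ from Lemma \ref{lc-binomial-e1} and then estimate $\sum_{j=0}^{k}\frac{(r_1+r_2+k+1)^2}{(r_1+j+1)^2(r_2+k-j+1)^2}\le \ka_1$ by splitting the sum at the midpoint and comparing with $\sum(j+1)^{-2}$. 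The factor $(2\mu/\rho)^r$ and $(\mu/\rho)^l$ appearing in (v) and (vi) come from trading orders of $|\xi|$ against the factor $M_{m}/M_{m-1}\le|\xi|/\rho$ available on $Z_H(m,\rho)$: precisely, on $Z_H(m,\rho)$ one has $|\xi|^{-1}\le \rho^{-1}M_{m-1}/M_m\le \rho^{-1}M_{r-l+k}/M_{r-l+k+1}$ for the relevant indices (using Lemma \ref{lc-prop}(i) to push the ratio down from level $m$ to level $r-l+k+1$), and iterating $l$ times converts $|\xi|^{-l}M_{r+k}$ into $(\rho^{-1})^lM_{r-l+k}$-type bounds up to the $\mu^l$ bookkeeping; the restriction $p\mapsto\min\{p,m-r\}$ in (v) is exactly what guarantees all indices stay $\le m$ so that this comparison is legitimate.

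For (vi) I would write $f_1f_2$ via (iv) with $q_2=-r$, $r_2=r$, obtaining a priori membership in $S^{(p)}\{q-r,2r,\ka_1K_1K_2\}$, and then apply (v) with $l=r$ to climb back to $S^{(p)}\{q,r,\ka_1K_1K_2(\mu/\rho)^r\}$; the extra $2^r$ over the naive $(\mu/\rho)^r$ is absorbed because passing from the product estimate to the single-index estimate requires bounding $M_{2r+k}/M_{r+k}$ against $(2\mu)^r M_{r}$-type quantities, and Lemma \ref{lc-binomial-e2} together with $\binom{2r-1}{r}\le 2^{2r-1}$ supplies the $2^r$ (after taking a square root of the crude $4^r$, or more carefully tracking the combinatorial factor). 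The main obstacle, and the only place that needs genuine care rather than bookkeeping, is establishing the uniform numerical bound $\sup_{k,r_1,r_2}\sum_{j=0}^{k}\frac{(r_1+r_2+k+1)^2}{(r_1+j+1)^2(r_2+k-j+1)^2}\le 3\pi^2$: one splits the sum according to whether $r_1+j\ge r_2+k-j$ or not, and on each half bounds the larger-index squared denominator from below by $\big((r_1+r_2+k+1)/2\big)^2$, leaving $\sum(\text{small index}+1)^{-2}\le\pi^2/6$ on each half, times $4$ from the halving times $2$ from the two halves, giving $\le 4\cdot2\cdot\pi^2/6<3\pi^2$. Everything else — the index arithmetic in (i) and (v), the linearity in (ii)–(iii), and the $t$-independent special cases where $\ka_1$ drops out because the sum collapses to a single term $j=k$ (with the convention that a $t$-independent symbol has all $\pa_t$-derivatives vanishing) — is routine.
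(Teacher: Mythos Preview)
Your treatment of (i)--(v) is correct and essentially coincides with the paper's argument: (i)--(iii) are immediate, (iv) is Leibniz plus the first binomial estimate \eqref{lc-binomial-e1} together with the numerical bound
\[
\sum_{j=0}^{k}\frac{(r_1+r_2+k+1)^2}{(r_1+j+1)^2(r_2+k-j+1)^2}\le\ka_1,
\]
and (v) is the trade of $|\xi|^{-l}$ against $(\rho^{-1}M_{m-1}/M_m)^l$ on $Z_H(m,\rho)$, with the restriction $k\le m-r$ ensuring $M_{r+k}/M_{r+k-1}\le M_m/M_{m-1}$.

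Part (vi), however, has a genuine gap. Your proposed route ``(iv) then (v)'' places $f_1f_2$ first in $S^{(p)}\{q-r,2r,\ka_1K_1K_2\}$ and then applies (v) with the parameter pair $(r',l)=(2r,r)$. But (v) returns membership only in $S^{(\min\{p,\,m-2r\})}$, not in $S^{(p)}$; moreover it requires $2r\le m$, whereas (vi) is stated for all $1\le r\le m$. This derivative loss is fatal for the downstream induction in Lemma~\ref{est-bk-th_k}, where (vi) is applied with $p=m-k$ and $r=k$ and the output must remain in $S^{(m-k)}$; your version would give only $S^{(m-2k)}$, and the induction on $k$ collapses.

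The paper avoids this by \emph{not} passing through $M_{2r+k}$. It expands $\pa_t^k(f_1f_2)$ directly by Leibniz, factors out $M_{r+k}$ rather than $M_{2r+k}$, and invokes the second binomial estimate \eqref{lc-binomial-e2},
\[
\binom{k}{j}\frac{M_{r+j}M_{r+k-j}}{M_{r+k}}\le \binom{r+k-1}{r}M_r,
\]
so that the residual factor is $(\mu/|\xi|)^r\binom{r+k-1}{r}M_r$. Using $|\xi|\ge\rho M_m/M_{m-1}$ and the log-convexity inequality $M_j/(jM_{j-1})\le M_m/(mM_{m-1})$ one obtains
\[
\Bigl(\frac{M_{m-1}}{M_m}\Bigr)^r\binom{r+k-1}{r}M_r
\le \frac{(r+k-1)\cdots k}{m^r}\le 2^r
\]
(since $k\le p\le m$ and $r\le m$ give $r+k-1<2m$), which is the origin of the $2^r$. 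The point is that this argument never needs $r+k>m$-indices, so the full differentiability order $p$ is preserved. Your attempt to account for the $2^r$ via $\binom{2r-1}{r}\le 2^{2r-1}$ is not the right mechanism; the relevant binomial is $\binom{r+k-1}{r}$ with $k$ up to $p$, and it is the pairing with $(M_{m-1}/M_m)^r M_r$ that produces $2^r$.
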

\begin{proof}
(i), (ii) and (iii) are evident from the definition of the symbol classes. 

\noindent
(iv): 
Let $k\in \N$ and assume that $r_1\le r_2$ without loss of generality. 
By the inequality
\begin{equation}\label{sum_k^-2}
  \sum_{j=0}^k \(\frac{r_1+r_2+k+j+1}{(r_1+j+1)(r_2+k-j+1)}\)^2
\le
  \ka_1,
\end{equation}
which will be proved in Appendix, 
Leibniz rule, Lemma \ref{lc-binomial} and (\ref{sum_k^-2}), we have
\begin{align*}
\left|\pa_t^k\(f_1 f_2\)\right|
\le& K_1K_2
  \frac{\mu^{r_1+r_2+k} M_{r_1+r_2+k}}{(r_1+r_2+k+1)^2}
  |\xi|^{q_1+q_2}
\\
  &\times
  \sum_{j=0}^k \binom{k}{j}
  \frac{M_{r_2+k-j} M_{r_1+j}}{M_{r_1+r_2+k}}
  \(\frac{r_1+r_2+k+j}{(r_1+j+1)(r_2+k-j+1)}\)^2.
\\
\le& \ka_1K_1K_2
  \frac{\mu^{r_1+r_2+k} M_{r_1+r_2+k}}{(r_1+r_2+k+1)^2}
  |\xi|^{q_1+q_2}.
\end{align*}

\noindent
(v): Let $0\le k \le \min\{p,m-r\}$. 
Then we have 
\begin{align*}
  \left|\pa_t^k f\right|
  \le& K\frac{\mu^{r+k} M_{r+k}}{(r+k+1)^2} |\xi|^{q}
\\
  =& 
  K\frac{\mu^{r-l+k} M_{r-l+k}}{(r-l+k+1)^2} |\xi|^{q+l}
    \(\frac{\mu}{|\xi|}\)^l
    \frac{M_{r+k}}{M_{r+k-1}}\cdots 
      \frac{M_{r-l+k+1}}{M_{r-l+k}}
    \(\frac{r-l+k+1}{r+k+1}\)^2 \\
\le& 
  K\frac{\mu^{r-l+k} M_{r-l+k}}{(r-l+k+1)^2} |\xi|^{q+l}
    \(\frac{\mu M_{m-1}}{\rho M_m}\)^l
    \(\frac{M_{r+k}}{M_{r+k-1}}\)^{l}
\\
\le& 
  K\(\frac{\mu}{\rho}\)^l \frac{\mu^{r-l+k} M_{r-l+k}}{(r-l+k+1)^2} 
  |\xi|^{q+l}.
\end{align*}

\noindent
(vi): Let $0\le k \le p$. 
By Lemma \ref{lc-binomial} and (\ref{sum_k^-2}) we have
\begin{align*}
  \left|\pa_t^k (f_1f_2)\right|
\le& 
  K_1K_2
  \frac{\mu^{r+k} M_{r+k}}{(r+k+1)^2}
  |\xi|^{q}
  \(\frac{\mu}{|\xi|}\)^{r}
  \sum_{j=0}^k \binom{k}{j}
  \frac{M_{r+j}M_{r+k-j}}{M_{r+k}} 
  \frac{(r+k+1)^2}{(r+j+1)^2 (r+k-j+1)^2} 
\\
\le& 
  K_1K_2
  \frac{\mu^{r+k} M_{r+k}}{(r+k+1)^2}
  |\xi|^{q}
\\
  &\times
  \(\frac{\mu}{\rho}\frac{M_{m-1}}{M_m}\)^{r}
  \binom{r+k-1}{r} 
  M_{r}\sum_{j=0}^k 
  \frac{(r+k+1)^2}{(r+j+1)^2 (r+k-j+1)^2}
\\
\le& 
  \ka_1 K_1K_2\(\frac{\mu}{\rho}\)^{r}
  \frac{\mu^{r+k} M_{r+k}}{(r+k+1)^2}
  |\xi|^{q}
  \(\frac{M_{m-1}}{M_m}\)^{r}
  \binom{r+k-1}{r} 
  M_{r}
\\
\le& 
  \ka_1 K_1K_2\(\frac{2\mu}{\rho}\)^{r}
  \frac{\mu^{r+k} M_{r+k}}{(r+k+1)^2}
  |\xi|^{q},
\end{align*}
where we used the following estimates: 
\begin{align*}
  \(\frac{M_{m-1}}{M_m}\)^{r} \binom{r+k-1}{r} M_{r}
=&
  \frac{(r+k-1)!}{m^r(k-1)!}
  \frac{\frac{M_{r}}{rM_{r-1}}\cdots\frac{M_1}{1M_0}}
       {\(\frac{M_m}{mM_{m-1}}\)^{r}} 
\le
  \frac{(r+k-1)\cdots k}{m^r}
\le
  2^r.
\end{align*}
\end{proof}
\begin{Lemm}\label{symbol-p2}
Let $m$, $p$, $r$ be positive integers satisfying $\max\{p,r\}\le m$ 
and $K$ a positive real number. 
If $f\in S^{(p)}\{-r,r,K\}$, $\rho>\mu$ and $\rho\ge 4\ka_1 \mu K$, 
then there exist $g_1, g_2 \in S^{(\min\{p,m-r\})}\{-r,r,2K\}$ such that the following properties are established in $Z_H(m,\rho):$
\begin{equation*}
  \frac{1}{1-f}=1+g_1
\end{equation*}
and
\begin{equation}\label{symbol-p2-e2}
  1-\sqrt{1-f} = \frac12 f(1+g_2).
\end{equation}
\end{Lemm}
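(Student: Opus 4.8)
The plan is to derive both identities from their formal power-series expansions and then check that every term lands in the required symbol class by repeatedly invoking the product estimates of Lemma~\ref{symbol-p1}. For the first identity, write $\frac{1}{1-f}=1+g_1$ with $g_1:=\sum_{\ell\ge 1}f^\ell$; for the second, use the binomial series $1-\sqrt{1-f}=\frac12 f(1+g_2)$ with $g_2:=\sum_{\ell\ge 1}c_\ell f^\ell$, where $c_\ell=-\binom{1/2}{\ell+1}2^{\ell+1}/1$ are the (sign-adjusted) Taylor coefficients of $2(1-\sqrt{1-f})/f-1$; the key elementary fact is $|c_\ell|\le 1$ for all $\ell\ge 1$, which follows from the standard bound on the central-binomial-type coefficients of $(1-f)^{1/2}$. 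Thus in both cases it suffices to control $\sum_{\ell\ge 1}f^\ell$ in $S^{(\min\{p,m-r\})}\{-r,r,2K\}$.

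The heart of the argument is the estimate on powers $f^\ell$. Since $f\in S^{(p)}\{-r,r,K\}$, part (vi) of Lemma~\ref{symbol-p1} is exactly tailored to this situation: applying it with $f_1=f^{\ell-1}\in S^{(p)}\{-(\ell-1)r,(\ell-1)r,K_{\ell-1}\}$ (inductively) and $f_2=f\in S^{(p)}\{-r,r,K\}$ — after first using part (v) to rebalance the indices so that the hypothesis ``$f_2\in S^{(p)}\{-r,r,\cdot\}$'' of (vi) is met — one obtains $f^\ell\in S^{(\min\{p,m-r\})}\{-r,r,K_\ell\}$ with a recursion of the shape $K_\ell\le \ka_1(2\mu/\rho)^r K_{\ell-1}K$. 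The crude consequence I would actually use is the simpler bound $K_\ell\le (\ka_1\mu K/\rho)^{\ell-1}K$ or something of that order; the precise constants are irrelevant, what matters is a geometric decay factor. Here the hypotheses $\rho>\mu$ and $\rho\ge 4\ka_1\mu K$ enter: they force $\ka_1\mu K/\rho\le 1/4$, so the series $\sum_{\ell\ge1}K_\ell$ converges and is bounded by $\sum_{\ell\ge1}K\,4^{-(\ell-1)}=\frac{4}{3}K\le 2K$. Then parts (ii) and (iii) of Lemma~\ref{symbol-p1} (countable additivity in the symbol class, which reduces to the triangle inequality applied to the defining pointwise bounds, summed — legitimate because the tail is geometric and each bound is uniform in $Z_H(m,\rho)$) give $g_1=\sum_{\ell\ge1}f^\ell\in S^{(\min\{p,m-r\})}\{-r,r,2K\}$, and likewise $g_2=\sum_{\ell\ge1}c_\ell f^\ell\in S^{(\min\{p,m-r\})}\{-r,r,2K\}$ since $|c_\ell|\le 1$ only helps.

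A point to handle carefully is the reduction of the index in applying Lemma~\ref{symbol-p1}(vi): (vi) requires the second factor to sit in $S^{(p)}\{-r,r,\cdot\}$ with the same $r$ as appears in the exponent of $(2\mu/\rho)^r$, whereas naively $f^{\ell-1}$ lives at index $(\ell-1)r$. The clean way is instead to prove the power bound by a direct Leibniz-rule computation paralleling the proof of (vi): estimate $\pa_t^k(f^\ell)$ using the multinomial Leibniz rule, bound the product of $M$'s by iterating Lemma~\ref{lc-binomial}, absorb the $\mu/|\xi|$ factors via $|\xi|\ge\rho M_m/M_{m-1}$ and $M_r/M_{r-1}\le M_m/M_{m-1}$ on $Z_H(m,\rho)$ as in the closing display of Lemma~\ref{symbol-p1}'s proof, and collect the combinatorial sum $\sum(\cdots)^{-2}\le\ka_1$. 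This is the one genuinely computational step and the main obstacle: getting the recursion for $K_\ell$ with a clean geometric ratio without the constant $\ka_1$ compounding uncontrollably. The resolution is that $\ka_1$ appears only once per multiplication but is dominated at each stage by the factor $\mu/\rho\le 1/(4\ka_1)$ coming from one surrendered index, so the net per-step ratio stays $\le 1/4$; the hypotheses $\rho>\mu$, $\rho\ge 4\ka_1\mu K$ are precisely what make this bookkeeping close.
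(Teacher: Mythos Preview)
Your overall plan---expand $1/(1-f)$ and $1-\sqrt{1-f}$ as power series in $f$, control $f^\ell$ in the symbol class, and sum the geometric series using $\rho\ge 4\ka_1\mu K$---is exactly the paper's approach, and your treatment of $g_2$ via $|\binom{1/2}{\ell+1}|\le 1/2$ matches as well. The one place you make life harder than necessary is the induction on $f^\ell$: you put $f^{\ell-1}$ in $S^{(p)}\{-(\ell-1)r,(\ell-1)r,\cdot\}$ and then worry about rebalancing indices or redoing the Leibniz computation from scratch. The paper sidesteps this by observing that Lemma~\ref{symbol-p1}\,(vi) with $q=-r$ is self-replicating on $S^{(p_0)}\{-r,r,\cdot\}$: if $f_1,f_2\in S^{(p_0)}\{-r,r,\cdot\}$ then $f_1f_2\in S^{(p_0)}\{-r,r,\cdot\}$ with constant multiplied by $\ka_1(2\mu/\rho)^r$. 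So one simply takes $f_1=f$, $f_2=f^{j}$ (inductively already in $S^{(p_0)}\{-r,r,\cdot\}$), and gets $f^{\ell}\in S^{(p_0)}\{-r,r,K(\ka_1 K(2\mu/\rho)^r)^{\ell-1}\}$ directly---no index growth, no rebalancing, no fresh Leibniz computation. With $(2\mu/\rho)^r\le 2\mu/\rho$ and $2\ka_1\mu K/\rho\le 1/2$ this sums to $\le 2K$. Your workarounds are not wrong, just unnecessary.
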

\begin{proof}
We denote $p_0=\min\{p,m-r\}$. 
By using Lemma \ref{symbol-p1} (v) with $q=-r$ and $l=r$, we have 
$f\in S^{(p_0)}\{0,0,K(\mu/\rho)^r\}$, 
it follows that $|f| \le K(\mu/\rho)^r<1$ for $K \mu<\rho$. 
Moreover, by applying Lemma \ref{symbol-p1} (vi) with $f_1=f$, $f_2=f^j$, 
$q=-r$, $K_1=K$ and $K_2=K(\ka_1K(2\mu/\rho)^k)^{j-1}$ for $j=1,\ldots,l-1$, we have 
\begin{equation*}
  f^l\in S^{(p_0)}\left\{-r,r, K\(K\ka_1\(\frac{2\mu}{\rho}\)^r\)^{l-1}\right\}
\end{equation*}
for $l=2,3,\ldots$. 
Therefore, by Lemma \ref{symbol-p1} (ii) and noting 
\begin{align*}
  \sum_{l=1}^\infty \(K\ka_1\(\frac{2\mu}{\rho}\)^r\)^{l-1}
  \le
  \sum_{l=1}^\infty \(\frac{2K\ka_1\mu}{\rho}\)^{l-1}
 =\frac{1}{1-\frac{2K\ka_1\mu}{\rho}}
 \le 2, 
\end{align*}
we have 
\begin{align*}
  g_1=\sum_{l=1}^\infty f^l
  \in S^{(p_0)}\left\{-r,r, 2K\right\}. 
\end{align*}
Moreover, thanks to the representation 
\begin{align*}
  g_2=2\sum_{l=1}^\infty \binom{1/2}{l+1}(-f)^{l}
\end{align*}
and the inequality 
$|\binom{1/2}{l+1}|\le 1/2$ for any $l\ge 0$, we have 
(\ref{symbol-p2-e2}). 
\end{proof}

For $f\in S^{(p)}\{q,r,K\}$ we introduce the following notation 
for convenience: 
\begin{equation*}
  f \lesssim K\si^{(p)}\{q,r\}. 
\end{equation*}
In particular, we denote $1\si^{(p)}\{q,r\}=\si^{(p)}\{q,r\}$, 
that is, $\si^{(p)}\{q,r\}$ stands for any function in 
the symbol class $S^{(p)}\{q,r,1\}$. 
Moreover, for positive real numbers $K_1$ and $K_2$ we introduce the following notations: 
\begin{itemize}
\item 
$K_1 (K_2\si^{(p)}\{q,r\})=(K_1 K_2)\si^{(p)}\{q,r\}$. 
\item 
$K_1\si^{(p)}\{q,r\}+K_2\si^{(p)}\{q,r\}=(K_1+K_2)\si^{(p)}\{q,r\}$. 
\item 
$K_1\si^{(p_1)}\{q_1,r_1\}\lesssim K_2\si^{(p_2)}\{q_2,r_2\}
  \; \Leftrightarrow \;
  S^{(p_1)}\{q_1,r_1,K_1\}\subseteq S^{(p_2)}\{q_2,r_2,K_2\}$. 
\item 
$(K_1\si^{(p)}\{q_1,r_1\})(K_2\si^{(p)}\{q_2,r_2\})
  \lesssim \ka_1 K_1K_2\si^{(p)}\{q_1+q_2,r_1+r_2\}$. 
\end{itemize}
By use of the above notation the properties of Lemma \ref{symbol-p1} and Lemma \ref{symbol-p2} 
are expressed as follows: 

\begin{Lemm}\label{symbol-p3}
Let $0\le r \le m$. 
Then the following properties are established in $Z_H(m,\rho):$ 
\begin{itemize}
\item[(i)] 
If $p_1>p_2$, then $\si^{(p_1)}\{q,r\}\lesssim \si^{(p_2)}\{q,r\}$. 
\item[(ii)] 
If $p\ge 1$ and $f\lesssim \si^{(p)}\{q,r\}$, then $\pa_t f\lesssim \si^{(p-1)}\{q,r+1\}$.
\item[(iii)] 
$\si^{(p)}\{q,r\}\lesssim (\mu/\rho)^l\si^{(\min\{p,m-r\})}\{q+l,r-l\}$  for $1\le l\le r \le m$. 
\item[(iv)] 
$\si^{(p)}\{q_1,r_1\} \si^{(p)}\{q_2,r_2\}
\lesssim \ka_1 \si^{(p)}\{q_1+q_2,r_1+r_2\}$ for $r_1,r_2\ge 0$, 
and $\si^{(p)}\{q,r\} \si^{(p)}\{-r,r\}
\lesssim \ka_1(2\mu/\rho)^r \si^{(p)}\{q,r\}$ for any $p\le m$ and 
$0\le r\le m$. 
\item[(v)]
If $f\lesssim K\si^{(p)}\{-r,r\}$, 
$\rho>\mu$, $\rho\ge 4\ka_1\mu K$ and $\max\{p,r\}\le m$, 
then $1/(1-f) \lesssim 1+ 2K\si^{(\min\{p,m-r\})}\{-r,r\}$. 
Moreover, if $f\not=0$, then 
$2(1-\sqrt{1-f})/f \lesssim 1+2K\si^{(\min\{p,m-r\})}\{-r,r\}$. 
\end{itemize}
\end{Lemm}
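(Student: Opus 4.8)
The plan is to observe that Lemma~\ref{symbol-p3} is nothing more than a \emph{translation} of Lemmas~\ref{symbol-p1} and~\ref{symbol-p2} into the abbreviated notation $f\lesssim K\si^{(p)}\{q,r\}$, so the proof consists of checking that each clause (i)--(v) is the shorthand rendering of a clause already proved. Concretely, recall that by definition $f\lesssim K\si^{(p)}\{q,r\}$ means $f\in S^{(p)}\{q,r,K\}$, and $\si^{(p)}\{q,r\}$ alone denotes an arbitrary element of $S^{(p)}\{q,r,1\}$. With this dictionary in hand the proof is a line-by-line verification.

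\begin{proof}
Throughout we use the convention $f\lesssim K\si^{(p)}\{q,r\}\Leftrightarrow f\in S^{(p)}\{q,r,K\}$ and the containment rules listed before the statement of the lemma.

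\noindent(i): Since $S^{(p_1)}\{q,r,1\}\subset S^{(p_2)}\{q,r,1\}$ for $p_1>p_2$ (noted immediately after the definition of the symbol classes), any $f\lesssim\si^{(p_1)}\{q,r\}$ satisfies $f\in S^{(p_2)}\{q,r,1\}$, i.e. $f\lesssim\si^{(p_2)}\{q,r\}$.

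\noindent(ii): This is Lemma~\ref{symbol-p1}~(i) with $K=1$: if $f\in S^{(p)}\{q,r,1\}$ and $p\ge 1$ then $\pa_t f\in S^{(p-1)}\{q,r+1,1\}$, which reads $\pa_t f\lesssim\si^{(p-1)}\{q,r+1\}$.

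\noindent(iii): This is Lemma~\ref{symbol-p1}~(v) with $K=1$: $f\in S^{(p)}\{q,r,1\}$ implies $f\in S^{(\min\{p,m-r\})}\{q+l,r-l,(\mu/\rho)^l\}$ for $1\le l\le r\le m$, which is exactly $\si^{(p)}\{q,r\}\lesssim(\mu/\rho)^l\si^{(\min\{p,m-r\})}\{q+l,r-l\}$.

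\noindent(iv): The first assertion is Lemma~\ref{symbol-p1}~(iv) with $K_1=K_2=1$, giving $f_1f_2\in S^{(p)}\{q_1+q_2,r_1+r_2,\ka_1\}$ for $r_1,r_2\ge0$; the second is Lemma~\ref{symbol-p1}~(vi) with $K_1=K_2=1$, giving $f_1f_2\in S^{(p)}\{q,r,\ka_1(2\mu/\rho)^r\}$ for $p\le m$ and $1\le r\le m$, while the case $r=0$ is contained in the first assertion.

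\noindent(v): This is Lemma~\ref{symbol-p2}. Under the hypotheses $f\lesssim K\si^{(p)}\{-r,r\}$, $\rho>\mu$, $\rho\ge 4\ka_1\mu K$ and $\max\{p,r\}\le m$, that lemma produces $g_1,g_2\in S^{(\min\{p,m-r\})}\{-r,r,2K\}$ with $1/(1-f)=1+g_1$ and $1-\sqrt{1-f}=\frac12 f(1+g_2)$, i.e. (when $f\not=0$) $2(1-\sqrt{1-f})/f=1+g_2$. Rewriting $g_1,g_2\lesssim 2K\si^{(\min\{p,m-r\})}\{-r,r\}$ gives the stated conclusions.
\end{proof}

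The main point to be careful about is bookkeeping rather than mathematics: one must check that every numerical constant appearing in Lemmas~\ref{symbol-p1} and~\ref{symbol-p2} (the factors $\ka_1$, $2^r$, $(2\mu/\rho)^r$, the $2K$ in the Neumann series, and the bound $|\binom{1/2}{l+1}|\le1/2$) is faithfully transcribed into the $\lesssim$-notation, and that the order-reduction indices $\min\{p,m-r\}$ match in each clause. Since all of these have already been established in the two preceding lemmas, there is no genuine obstacle; Lemma~\ref{symbol-p3} is a convenient restatement designed to make the symbol calculus in the proof of Proposition~\ref{PropL} readable.
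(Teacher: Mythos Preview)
Your proposal is correct and matches the paper's own treatment: the paper introduces Lemma~\ref{symbol-p3} with the sentence ``By use of the above notation the properties of Lemma~\ref{symbol-p1} and Lemma~\ref{symbol-p2} are expressed as follows'' and gives no further proof, so your clause-by-clause identification of (i)--(v) with the corresponding items of Lemmas~\ref{symbol-p1} and~\ref{symbol-p2} is exactly what is intended.
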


\subsection{Estimates in the symbol classes}
%
Let us determine the constant $\mu$ by 
\begin{equation}\label{mu}
  \mu:=e^2\mu_0 \ka_1. 
\end{equation}
By using the properties of the symbol classes above, we have the following lemma: 
\begin{Lemm}\label{symbol_b1-th1}
For $\rho \ge \sqrt{2}\ka_1^2 \mu$ the following estimates are established$:$ 
\begin{equation}\label{symbol_b1-th1-e1}
  b_1 \lesssim \frac{1}{2}\si^{(m-1)}\{0,1\}
\end{equation}
and
\begin{equation}\label{symbol_b1-th1-e2}
  \th_1\lesssim \frac{\ka_1}{2} \si^{(m-1)}\{-1,1\}. 
\end{equation}
\end{Lemm}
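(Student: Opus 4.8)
The plan is to compute $b_1$ and $\th_1$ directly from their defining formulas and to read off membership in the symbol classes $S^{(m-1)}\{0,1\}$ and $S^{(m-1)}\{-1,1\}$ using the calculus of Lemma~\ref{symbol-p3}. First I would treat $b_1=-a'(t)/(2a(t))$. Writing $b_1 = -\tfrac12 a'(t)\cdot a(t)^{-1}$, I would show that $a'(t) \lesssim \mu_0 \si^{(p)}\{0,1\}$ for any $p$: indeed, differentiating $k$ times gives $|a^{(k+1)}(t)| \le \mu_0^{k+1}M_{k+1}$ by hypothesis (\ref{PropL-e1}), and since $\mu = e^2\mu_0\ka_1 \ge \mu_0$ and $(r+k+1)^2 = (k+2)^2$ is bounded by a constant absorbed into the definition (more precisely one checks $\mu_0^{k+1}M_{k+1} \le \mu^{1+k}M_{1+k}/(k+2)^2$ since $\mu/\mu_0 = e^2\ka_1$ is large enough to beat $(k+2)^2$ for all $k\ge 0$), we get $a' \in S^{(m-1)}\{0,1\}$ after also restricting to $k \le m-1$ derivatives. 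Similarly $a(t)^{-1}$ is a bounded time-dependent function with all derivatives controlled: since $1 \le a(t) \le a_1$ and $a \in C^\infty$, one shows by the Fa\`a di Bruno / geometric-series argument (as in Lemma~\ref{symbol-p2}, applied to $f = 1-a$ suitably scaled, or directly) that $a^{-1} \lesssim \si^{(m-1)}\{0,0\}$, i.e.\ $a^{-1}\in S^{(m-1)}\{0,0,K\}$ with $K$ an absolute constant. Then the product rule Lemma~\ref{symbol-p3}(iv) with $r_1 = 1$, $r_2 = 0$ gives $b_1 = -\tfrac12 a' a^{-1} \lesssim \ka_1 \cdot \tfrac12 \cdot K \cdot \si^{(m-1)}\{0,1\}$; absorbing the constants into the choice $\rho \ge \sqrt2\ka_1^2\mu$ yields (\ref{symbol_b1-th1-e1}) with the stated constant $\tfrac12$.

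For (\ref{symbol_b1-th1-e2}) I would start from
\[
  \th_1 = -\i\frac{\phi_{1\Im}}{\ol{b_1}}\(1-\sqrt{1-\frac{|b_1|^2}{\phi_{1\Im}^2}}\),
\]
and use the identity $1 - \sqrt{1-f} = \tfrac12 f(1+g_2)$ from Lemma~\ref{symbol-p2}/\ref{symbol-p3}(v) with $f = |b_1|^2/\phi_{1\Im}^2$. Note $\phi_{1\Im} = a(t)|\xi|$, so $\phi_{1\Im}^{-1} = a^{-1}|\xi|^{-1} \lesssim \si^{(m-1)}\{-1,0\}$, and hence $|b_1|^2/\phi_{1\Im}^2 \lesssim (\tfrac12)^2\ka_1^2 \si^{(m-1)}\{0,2\}\si^{(m-1)}\{-2,0\}$; applying Lemma~\ref{symbol-p3}(iv) (second clause, with $r=2$) this lies in $K'\si^{(m-1)}\{-2,2\}$ for a suitable $K'$, and the hypothesis $\rho \ge \sqrt2\ka_1^2\mu$ guarantees $\rho > \mu$ and $\rho \ge 4\ka_1\mu K'$ so that Lemma~\ref{symbol-p3}(v) applies and $1+g_2 \lesssim 1 + \si^{(m-1)}\{-2,2\}$ is essentially of order $\si^{(m-1)}\{0,0\}$. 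Multiplying the three pieces $-\i\phi_{1\Im}\ol{b_1}^{-1} = -\i (a|\xi|)(-2a/\overline{a'}) $ — wait, more carefully $\phi_{1\Im}/\ol{b_1} = a|\xi| \cdot (-2a/\ol{a'})$ — this has a troublesome $1/\ol{a'}$; the resolution is that the factor $1-\sqrt{1-f} = \tfrac12 f(1+g_2) = \tfrac12 (|b_1|^2/\phi_{1\Im}^2)(1+g_2)$ carries a $|b_1|^2$ which cancels the $\ol{b_1}^{-1}$, leaving one power of $b_1$. So $\th_1 = -\tfrac{\i}{2}\, b_1 \cdot (\ol{b_1}/|b_1|^2)|b_1|^2 \phi_{1\Im}^{-1}(1+g_2) = -\tfrac{\i}{2} b_1 \cdot \phi_{1\Im}^{-1}\cdot(\text{unimodular})\cdot(1+g_2)$; combining $b_1 \lesssim \tfrac12\si^{(m-1)}\{0,1\}$ with $\phi_{1\Im}^{-1}\lesssim \si^{(m-1)}\{-1,0\}$ and $(1+g_2)\lesssim \si^{(m-1)}\{0,0\}$ via Lemma~\ref{symbol-p3}(iv) gives $\th_1 \lesssim \tfrac{\ka_1}{2}\si^{(m-1)}\{-1,1\}$ after folding the $\ka_1$'s from the products into $\rho$, which is exactly (\ref{symbol_b1-th1-e2}).

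The step I expect to be the main obstacle is the careful bookkeeping of constants: each application of the product rule Lemma~\ref{symbol-p3}(iv) introduces a factor $\ka_1 = 3\pi^2$, and the reciprocal/square-root lemmas introduce further factors, so one must verify that the hypothesis $\rho \ge \sqrt2\,\ka_1^2\mu$ is exactly strong enough to (a) make all the smallness conditions $\rho > \mu$, $\rho \ge 4\ka_1\mu K$ hold for the relevant intermediate constants $K$, and (b) absorb the accumulated $\ka_1$-powers and the binomial/Stirling constants from Lemma~\ref{symbol-p1}(v)--(vi) so that the final constants come out as the clean $\tfrac12$ and $\tfrac{\ka_1}{2}$ claimed. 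Verifying that $a^{-1}$ and its first $m-1$ derivatives lie in $S^{(m-1)}\{0,0,K\}$ with an \emph{explicit} absolute $K$ (independent of $m$, $\rho$, $\xi$) also requires a small induction using $\mu \ge e^2\ka_1\mu_0$ to beat the $(r+k+1)^2$ denominators; this is routine but must be done with care since all later lemmas depend on it. Everything else — the algebraic identities for $b_1$, $\phi_1$, $\th_1$ — is immediate from Lemma~\ref{r-diag} and the definitions.
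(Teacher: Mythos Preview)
Your overall strategy matches the paper's: establish $1/a\in S^{(m)}\{0,0,1\}$, combine with bounds on $a'$ to control $b_1$, then write $\th_1=-\tfrac{\i}{2}(b_1/\phi_{1\Im})(1+g_2)$ via the square-root expansion of Lemma~\ref{symbol-p2} and apply the product calculus. Two execution points need correction, however.

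First, Lemma~\ref{symbol-p2} cannot be invoked for $a^{-1}$ with $f=1-a$: that lemma requires $f\in S^{(p)}\{-r,r,K\}$ with $r\ge1$, i.e.\ $f$ must decay like $|\xi|^{-r}$, whereas $1-a(t)$ is $\xi$-independent and not small. The paper instead proves $|(1/a)^{(k)}|\le \mu^k M_k/(k+1)^2$ by a direct induction on $k$, using the identity $\sum_{j=0}^{l+1}\binom{l+1}{j}a^{(j)}(1/a)^{(l+1-j)}=0$, the sharpened bound $|a^{(k)}|\le (e^2\mu_0)^kM_k/(k+1)^2$ (from $(k+1)^{2/k}\le e^2$), Lemma~\ref{lc-binomial}, and the combinatorial inequality~(\ref{sum_k^-2}). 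This is presumably what your ``or directly'' gestures at, but it is the only route that works here.

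Second, your claim that the extra $\ka_1$ produced by the product rule for $b_1$ can be ``absorbed into $\rho\ge\sqrt2\ka_1^2\mu$'' is wrong: since $b_1\in S^{(m-1)}\{0,1,K\}$ has $q=0$, the defining inequality $|\pa_t^k b_1|\le K\mu^{1+k}M_{1+k}/(k+2)^2$ is $|\xi|$-independent, and enlarging $\rho$ only shrinks the domain $Z_H(m,\rho)$ without improving $K$. The paper obtains the clean $\tfrac12$ by doing the Leibniz expansion for $b_1^{(k)}$ by hand, pairing the \emph{sharper} bound $|a^{(j+1)}|\le(e^2\mu_0)^{j+1}M_{j+1}/(j+2)^2$ against $|(1/a)^{(k-j)}|\le\mu^{k-j}M_{k-j}/(k-j+1)^2$; the gap $\mu/(e^2\mu_0)=\ka_1$ then exactly cancels the $\ka_1$ coming from~(\ref{sum_k^-2}). (Equivalently, in your framework one checks $a'\in S^{(m-1)}\{0,1,4/(e^2\ka_1)\}$, whence the product rule gives $b_1\in S^{(m-1)}\{0,1,2/e^2\}\subset S^{(m-1)}\{0,1,\tfrac12\}$.) The hypothesis $\rho\ge\sqrt2\ka_1^2\mu$ is used only for the $\th_1$ estimate, where it bounds the remainder $g_2$; your treatment of $\th_1$ is otherwise correct and coincides with the paper's.
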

\begin{proof}
By (\ref{PropL-e1}) we see that 
\begin{equation}\label{ak-m}
  \left|a^{(k)}(t)\right|
  = \(\(k+1\)^{\frac{2}{k}} \mu_0\)^k \frac{M_k}{(k+1)^2} 
  \le \frac{\(e^2 \mu_0\)^k M_k}{(k+1)^2}
\end{equation}
for any $k\in \N$. 
Moreover, we have 
\begin{equation}\label{est_a^(-1)}
  \left|\(\frac{1}{a}\)^{(k)}\right|
  \le \frac{\(e^2\ka_1 \mu_0\)^k M_k}{(k+1)^2} 
\end{equation}
for any $k\in \N\cup\{0\}$, 
it follows that 
\begin{equation}\label{1/a-symb}
  \frac{1}{a} \lesssim \si^{(m)}\{0,0\}. 
\end{equation}
$1/a\lesssim \si^{(p)}\{0,0\}$ for any $p\in \N$. 
Indeed, (\ref{est_a^(-1)}) is trivial for $k=0$ by (\ref{a0}). 
Suppose that (\ref{est_a^(-1)}) is valid for any $0\le k\le l$. 
Then by virtue of Leibniz rule we see that 
\begin{align*}
  0=&\left|\frac{d^{l+1}}{dt^{l+1}} 1\right|
   =\sum_{j=0}^{l+1}\binom{l+1}{j}a^{(j)}\(\frac{1}{a}\)^{(l-j+1)},
\end{align*}
and thus
\begin{align*}
  \(\frac{1}{a}\)^{(l+1)}
 =-\frac{1}{a}\sum_{j=1}^{l}
     \binom{l+1}{j}a^{(j)}
     \(\frac{1}{a}\)^{(l-j+1)}
  -\frac{a^{(l+1)}}{a^2}.
\end{align*}
By Lemma \ref{lc-binomial}, (\ref{sum_k^-2}) and (\ref{ak-m}) we have 
\begin{align*}
  \left|\(\frac{1}{a}\)^{(l+1)}\right|
\le&
  \sum_{j=1}^{l}\binom{l+1}{j}
    \frac{\(e^2 \mu_0\)^{j} M_j}{(j+1)^2}
    \frac{\(e^2 \ka_1\mu_0\)^{l-j+1} M_{l-j+1}}{(l-j+2)^2}
 +\frac{\(e^2\mu_0\)^{l+1}M_{l+1}}{(l+2)^2}
\\
\le&
  \frac{\(e^2\ka_1 \mu_0\)^{l+1} M_{l+1}}{(l+2)^2}
  \(\sum_{j=1}^{l}\ka_1^{-j}\binom{l+1}{j}
    \frac{M_j M_{l-j+1}}{M_{l+1}}
    \frac{(l+2)^2}{(j+1)^2(l-j+2)^2}
  +\ka_1^{-l-1}\)
\\
\le&
  \frac{\(e^2\ka_1 \mu_0\)^{l+1} M_{l+1}}{\ka_1(l+2)^2}
  \sum_{j=1}^{l+1}
    \frac{(l+2)^2}{(j+1)^2(l-j+2)^2}
\\
\le&
  \frac{\(e^2\ka_1 \mu_0\)^{l+1} M_{l+1}}{(l+2)^2}.
\end{align*}
Thus the estimate (\ref{est_a^(-1)}) is valid for $k=l+1$, 
it follows that (\ref{est_a^(-1)}) is valid for any $k\in \N$. 
Therefore, by Lemma \ref{lc-binomial}, \ref{symbol-p3} (iv), 
(\ref{sum_k^-2}) and (\ref{ak-m}) we have 
\begin{align*}
  \left|b_1^{(k)}(t)\right|
  \le& \frac12\sum_{j=0}^{k} \binom{k}{j}
    \left|a^{(j+1)}\right|\left|\(\frac{1}{a}\)^{(k-j)}\right|
\\
  \le& 
  \frac{1}{2}
  \sum_{j=0}^{k} \binom{k}{j}
  \frac{\(e^2\mu_0\)^{j+1}M_{j+1}}{(j+2)^2}
  \frac{\(e^2\ka_1\mu_0\)^{k-j}M_{k-j}}{(k-j+1)^2}
\\
  =& 
  \frac{1}{2}\frac{\mu^{k+1} M_{k+1}}{(k+2)^2}
  \sum_{j=0}^{k} \binom{k}{j}
  \frac{M_{j+1}M_{k-j}}{M_{k+1}}
  \frac{(k+2)^2}{(j+2)^2(k-j+1)^2}
  \frac{\ka_1^{k-j}\(e^2\mu_0\)^{k+1}}{\mu^{k+1} }
\\
  \le& 
  \frac{1}{2}\frac{\mu^{k+1} M_{k+1}}{(k+2)^2}
  \(\frac{e^2\ka_1\mu_0}{\mu}\)^{k+1}
  \le 
  \frac12\frac{\mu^{k+1} M_{k+1}}{(k+2)^2}
\end{align*}
for any $k\in \N_0:=\N\cup\{0\}$; 
hence (\ref{symbol_b1-th1-e1}) is valid. 
By (\ref{1/a-symb}) and Lemma \ref{symbol-p1} (iv) we have 
\begin{equation}\label{symbol_1/phi1}
  \frac{1}{\phi_{1\Im}} \lesssim \si^{(m)}\{-1,0\}, 
\end{equation}
thus by Lemma \ref{symbol-p3} (iv) we have 
\begin{equation*}
  \frac{|b_1|^2}{\phi_{1\Im}^2}
  \lesssim  
  \frac{\ka_1^2}{4}\si^{(m-1)}\{-1,1\}\si^{(m-1)}\{-1,1\}
  \lesssim  
  \frac{\ka_1^3\mu}{2\rho}\si^{(m-1)}\{-1,1\}. 
\end{equation*}
Consequently, by Lemma \ref{symbol-p3} (iv) and (v) 
for $\rho\ge \sqrt{2}\ka_1^2\mu(>\mu)$, 
which corresponds to $\rho\ge 4\ka_1\mu (\ka_1^3\mu/(2\rho))$, 
we have 
\begin{align*}
  \th_1
 =&\frac{-\frac{\i b_1}{\phi_{1\Im}}
    \(1-\sqrt{1-\frac{|b_1|^2}{\phi_{1\Im}^2}}\)}
    {\frac{|b_1|^2}{\phi_{1\Im}^2}}
\\
 \lesssim&
  \frac{\ka_1}{2}\si^{(m-1)}\{-1,1\} 
  \(\frac12+ \frac{\ka_1^3 \mu}{2\rho} \si^{(m-1)}\{-1,1\}\)
\\
 \lesssim&
  \frac{\ka_1}{4}\si^{(m-1)}\{-1,1\} 
 +\frac{\ka_1^4 \mu}{4\rho}\si^{(m-1)}\{-1,1\}\si^{(m-1)}\{-1,1\}
\\
 \lesssim&
  \(\frac{\ka_1}{4} + \frac{\ka_1^5 \mu^2}{2\rho^2}\)
  \si^{(m-1)}\{-1,1\}
 \lesssim
  \frac{\ka_1}{2}\si^{(m-1)}\{-1,1\}. 
\end{align*}
\end{proof}
By (\ref{symbol_b1-th1-e2}), Lemma \ref{symbol-p3} (iii) and (iv), we immediately see that 
\begin{equation*}
  |\th_1|^2 \lesssim \frac{\ka_1^3\mu^2}{2\rho^2}\si^{(m-1)}\{0,0\}
  \lesssim \frac{1}{4\ka_1}\si^{(m-1)}\{0,0\}, 
\end{equation*}
it follows that $|\th_1|<1$ in $Z_{H}(m,\rho)$. 
Therefore, (\ref{V1}) is actually reduced to (\ref{V2}). 
We shall show that $\Th_k$ ($k=2,\ldots,m-1$) are also invertible uniformly with respect to $k$, and thus we will come up to the equation (\ref{Vm}).

\begin{Lemm}\label{est-bk-th_k}
For $\nu:=4\ka_1^4$ and $\rho\ge \tilde{\rho}:=32\ka_1^3\nu^2\mu$ the following estimates are established$:$ 
\begin{equation}\label{est-bk-th_k-e1}
  b_k \lesssim \nu^{k}\si^{(m-k)}\{-k+1,k\}
\end{equation}
and
\begin{equation}\label{est-bk-th_k-e2}
  \th_k \lesssim \nu^{k}\si^{(m-k)}\{-k,k\}
\end{equation}
for any $k=1,\ldots,m$ in $Z_{H}(m,\rho)$. 
\end{Lemm}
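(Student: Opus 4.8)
We sketch the intended argument.

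\begin{proof}[Proof sketch]
The plan is to argue by induction on $k$, propagating (\ref{est-bk-th_k-e1}) and (\ref{est-bk-th_k-e2}) together with the auxiliary estimate $1/\phi_{k\Im}\lesssim C_*\,\si^{(m-k+1)}\{-1,0\}$ for a constant $C_*$ independent of $k$ and $m$ (this is needed because forming $\th_{k+1}$ requires controlling $1/\phi_{(k+1)\Im}$ as a symbol). The base case $k=1$ is already contained in Lemma \ref{symbol_b1-th1} and in (\ref{symbol_1/phi1}): since $\ka_1=3\pi^2$ and $\nu=4\ka_1^4$ we have $1/2\le\nu$ and $\ka_1/2\le\nu$, so $b_1\lesssim\nu\,\si^{(m-1)}\{0,1\}$ and $\th_1\lesssim\nu\,\si^{(m-1)}\{-1,1\}$, while the present hypothesis $\rho\ge\tilde\rho$ is stronger than the $\rho\ge\sqrt2\ka_1^2\mu$ used there.

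For the inductive step, assume the three estimates hold at level $k$. First I would note that $\th_k\ol{\th_k}$ is a product of two symbols in $S^{(m-k)}\{-k,k,\nu^k\}$, so the second half of Lemma \ref{symbol-p3} (iv) gives $|\th_k|^2\lesssim K\,\si^{(m-k)}\{-k,k\}$ with $K=\ka_1\nu^{2k}(2\mu/\rho)^k$; one checks that $\rho\ge\tilde\rho$ forces $\rho>\mu$ and $\rho\ge4\ka_1\mu K$, whence Lemma \ref{symbol-p3} (v) yields $1/(1-|\th_k|^2)=1+g$ with $g\lesssim2K\,\si^{(m-k-1)}\{-k,k\}$. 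Since $(\th_k)_t\lesssim\nu^k\si^{(m-k-1)}\{-k,k+1\}$ by Lemma \ref{symbol-p3} (ii), formula (\ref{b_k+1}) gives $b_{k+1}=-(\th_k)_t(1+g)\lesssim2\nu^k\si^{(m-k-1)}\{-k,k+1\}$, the correction $-(\th_k)_tg$ having the same indices with a small coefficient by (iv). As $-k=-(k+1)+1$, $m-k-1=m-(k+1)$ and $2\nu^k\le\nu^{k+1}$, this is exactly (\ref{est-bk-th_k-e1}) at level $k+1$.

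Next I would treat $\phi_{(k+1)\Im}$ via (\ref{phi_k+1-I}). The key maneuver is to group $1/\phi_{k\Im}$ with $b_k$ and $1/\phi_{(k+1)\Im}$ with $b_{k+1}$ \emph{before} squaring: by (iv) and the inductive hypotheses, $b_k/\phi_{k\Im}\lesssim\ka_1C_*\nu^k\si^{(m-k)}\{-k,k\}$ and $b_{k+1}/\phi_{(k+1)\Im}\lesssim2\ka_1C_*\nu^k\si^{(m-k-1)}\{-(k+1),k+1\}$, which have the \emph{balanced} indices $\{-j,j\}$, so squaring them via the second half of (iv) keeps $r\le m$ (squaring $b_{k+1}$ directly would produce $r=2(k+1)$, possibly exceeding $m$) and exhibits $|b_k|^2/\phi_{k\Im}^2$ and $\ol{\th_k}b_{k+1}/\phi_{k\Im}$ as small symbols $K'\si^{(\cdot)}\{-r,r\}$ with $r\le m$ and $\rho\ge4\ka_1\mu K'$. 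Applying (v) to $\sqrt{1-\cdot}$ in (\ref{phi_k+1-I}) then gives $\phi_{(k+1)\Im}=\phi_{k\Im}(1+\mathrm{small})$, hence $1/\phi_{(k+1)\Im}\lesssim C_*(1+\ve_k)\si^{(m-k)}\{-1,0\}$ with $\sum_k\ve_k<\infty$; fixing $C_*$ as the resulting supremum, a bounded quantity of the form $1+O(\mu/\rho)$, closes the auxiliary induction.

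Finally, for $\th_{k+1}$, write $1-\sqrt{1-f_{k+1}}=\tfrac12 f_{k+1}(1+g_2)$ with $f_{k+1}=|b_{k+1}|^2/\phi_{(k+1)\Im}^2$ and $g_2$ small from (v), so that $\th_{k+1}=-\i\,\dfrac{b_{k+1}}{\phi_{(k+1)\Im}}\cdot\tfrac12(1+g_2)$; the bound for $b_{k+1}/\phi_{(k+1)\Im}$ above, multiplied by the bounded factor $\tfrac12(1+g_2)$ at the cost of one more $\ka_1$ via (iv), gives $\th_{k+1}\lesssim2\ka_1^2C_*\nu^k\si^{(m-k-1)}\{-(k+1),k+1\}$, and $\nu=4\ka_1^4$ with $C_*\le2\ka_1^2$ makes $2\ka_1^2C_*\le\nu$, which is (\ref{est-bk-th_k-e2}) at level $k+1$. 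The hard part — and precisely what the choices $\nu=4\ka_1^4$, $\tilde\rho=32\ka_1^3\nu^2\mu$ are engineered for — is the uniformity in $k$: every invocation of Lemma \ref{symbol-p3} (v) needs $\rho\ge4\ka_1\mu K$ for a coefficient $K$ of order $(2\nu^2)^k(\mu/\rho)^{\cdot}$, so one must check that $\rho\ge\tilde\rho$ dominates $4\ka_1\mu K$ for \emph{every} such $K$ with $k\le m$, and that the single reserve factor $\nu/\ka_1=4\ka_1^3$ absorbs all $\ka_1$'s produced by (iv) together with the bounded constant $C_*$, so that the induction closes with the same $\nu$ at every level.
\end{proof}
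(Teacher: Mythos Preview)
Your induction matches the paper's, and the passage from $\th_k$ to $b_{k+1}$ via (\ref{b_k+1}) is the same. The difference is in how $\phi_{k\Im}$ is controlled. You carry $1/\phi_{k\Im}\lesssim C_*\,\si\{-1,0\}$ as an auxiliary hypothesis and recurse one step at a time; the paper avoids this in two ways. First, it never needs $1/\phi_{k\Im}$ separately to bound $|b_k|^2/\phi_{k\Im}^2$: from the very definition of $\th_k$ one has the identity $|b_k|/\phi_{k\Im}=2|\th_k|/(1+|\th_k|^2)$, so this ratio is controlled by $\th_k$ alone. Second, instead of a one-step recursion the paper iterates (\ref{phi_k+1-I}) all the way down to $\phi_{1\Im}$, writing $\phi_{(k+1)\Im}=\phi_{1\Im}\,\psi_k$ with $\psi_k$ an explicit finite product--sum in the quantities $\al_l,\be_l$ ($l\le k$), and shows $\psi_k\lesssim 2\si^{(m-k-1)}\{0,0\}$ uniformly in $k$ by summing a geometric-type series; inversion then gives $\phi_{1\Im}/\phi_{(k+1)\Im}\lesssim 2\si^{(m-k-1)}\{0,0\}$, and $b_{k+1}/\phi_{(k+1)\Im}$ follows by combining with the fixed bound $1/\phi_{1\Im}\lesssim\si^{(m)}\{-1,0\}$. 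This route sidesteps the circularity in your $C_*$ (your perturbations $\ve_k$ depend on $C_*^2$, so ``fixing $C_*$ as the supremum'' is really a fixed-point argument, not a definition) and avoids the extra factor of $\ka_1$ that each symbol multiplication in a naive step-by-step recursion would accumulate.

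One technical point you gloss over: when you write ``$-(\th_k)_t g$ has the same indices with a small coefficient by (iv)'', note that $(\th_k)_t\in S\{-k,k+1\}$ while $g\in S\{-k,k\}$, so the first half of (iv) would land you in $S\{-2k,2k+1\}$, and $2k+1$ may exceed $m$. The paper's cure, used systematically in the $\al_k,\be_k$ estimates, is to first push such factors into $S\{0,0\}$ via (iii) and only then multiply. Your regularity index $m-k+1$ for $1/\phi_{k\Im}$ is also one too generous from $k\ge 2$ on (the recursion involves $b_{k+1}$, which carries only $m-k-1$ derivatives), though this is harmless for the final statement.
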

\begin{proof}
Suppose that $\th_k \lesssim \nu^k \si^{(m-k)}\{-k,k\}$. 
By Lemma \ref{symbol-p3} (iii) and (iv), we have 
\begin{align*}
  |\th_k|^2 
  \lesssim&
   \ka_1\nu^{2k} \(\frac{2\mu}{\rho}\)^k \si^{(m-k)}\{-k,k\}
   \lesssim
   \ka_1 \(\frac{2\nu^2\mu^2}{\rho^2}\)^{k} \si^{(m-k)}\{0,0\}
\\
   \lesssim&
   \frac{2\ka_1\nu^2\mu^2}{\rho^2} \si^{(m-k)}\{0,0\} 
   \lesssim
   \frac12\si^{(m-k)}\{0,0\},  
\end{align*}
and thus 
\begin{align*}
  \frac{1}{1\pm|\th_k|^2}
  \lesssim 1+\si^{(m-k)}\{0,0\}
  \lesssim 2\si^{(m-k)}\{0,0\}
\end{align*}
by Lemma \ref{symbol-p3} (v). 
Therefore, by (\ref{b_k+1}), Lemma \ref{symbol-p3} (ii) and (iv) we have 
\begin{equation}\label{b_k+1-1}
  b_{k+1}\lesssim 
  \(\nu^k \si^{(m-k-1)}\{-k,k+1\}\)\(2\si^{(m-k)}\{0,0\}\)
  \lesssim 2\ka_1 \nu^k \si^{(m-k-1)}\{-k,k+1\} 
\end{equation}
and
\begin{align*}
  \frac{|b_k|^2}{\phi_{k\Im}^2}
=&\(\frac{2|\th_k|}{1+|\th_k|^2}\)^2
\lesssim \(\(2\nu^k\si^{(m-k)}\{-k,k\}\)\(2\si^{(m-k)}\{0,0\}\)\)^2
\\
\lesssim &
 \(4\ka_1\nu^k\si^{(m-k)}\{-k,k\}\)^2
\lesssim 
 16\ka_1^3 \(\frac{2\nu^2 \mu}{\rho}\)^k \si^{(m-k)}\{-k,k\}
\\
\lesssim &
  \frac{32\ka_1^3 \nu^2 \mu}{\rho} \si^{(m-k)}\{-k,k\}
\lesssim 
  \si^{(m-k)}\{-k,k\}
\\
\lesssim &
  \(\frac{\mu}{\rho}\)^k \si^{(m-k)}\{0,0\}
\end{align*}
for $\rho \ge \tilde{\rho}$. 
Let us define $\al_k$ and $\be_k$ by 
\begin{equation*}
  \al_k:=-1+\sqrt{1-\frac{|b_k|^2}{\phi_{k\Im}^2}}
  \; \text{ and } \;
  \be_k:=-\frac{\Im\{\ol{\th_k}b_{k+1}\}}{\phi_{1\Im}} 
\end{equation*}
for $k=1,\ldots,m-1$. 
By (\ref{phi_k+1-R}) and (\ref{phi_k+1-I}), 
we have
\begin{equation}\label{phi_k+1-R-rep}
  \phi_{(k+1)\Re}
  =\pa_t \log\sqrt{\frac{a}{\prod_{j=1}^k(1-|\th_j|^2)}}
\end{equation}
and
\begin{align*}
  \phi_{(k+1)\Im}
  =&\phi_{k\Im}(1+\al_k)+\phi_{1\Im}\be_k
\\
  =&\phi_{(k-1)\Im}(1+\al_k)(1+\al_{k-1})
   +\phi_{1\Im}\( \be_{k-1}(1+\al_k) + \be_k\)
\\
  &\vdots
\\
 =&\phi_{1\Im}
  \(\prod_{l=1}^k(1+\al_l) 
    + \sum_{j=1}^{k-1}\be_j\prod_{l=j+1}^k(1+\al_l)+\be_k\). 
\end{align*}
By Lemma \ref{symbol-p3} (v), we have
\begin{align*}
  \al_k 
  &\lesssim
  \(\frac{\mu}{\rho}\)^k \si^{(m-k)}\{0,0\}
    \(\frac12+\(\frac{\mu}{\rho}\)^k\si^{(m-k)}\{0,0\}\)
\\  
  &\lesssim
  \frac12\(\frac{\mu}{\rho}\)^k \si^{(m-k)}\{0,0\}
 +\ka_1 \(\frac{\mu}{\rho}\)^{2k}\si^{(m-k)}\{0,0\}
\\
  &\lesssim
  \(\frac{\mu}{\rho}\)^k \si^{(m-k)}\{0,0\}
\end{align*}
and
\begin{align*}
  \be_k
 &\lesssim 
  \(\nu^k\si^{(m-k)}\{-k,k\}\)\(2\ka_1 \nu^k \si^{(m-k-1)}\{-k,k+1\}\)
  \(\si^{(m)}\{-1,0\}\)
\\
 &\lesssim 
  2\ka_1^2 \nu^{2k}
  \si^{(m-k)}\{-k,k\} \si^{(m-k-1)}\{-k-1,k+1\}
\\
 &\lesssim 
  \frac{2\ka_1^3\mu}{\rho}
  \(\frac{\nu^2 \mu}{\rho}\)^{k} \(\frac{\mu}{\rho}\)^{k}
  \si^{(m-k-1)}\{0,0\}
\lesssim 
  \(\frac{\mu}{\rho}\)^{k} \si^{(m-k-1)}\{0,0\}. 
\end{align*}
Therefore, denoting 
$\psi_k:=\phi_{(k+1)\Im}/\phi_{1\Im}$, 
$N:=\rho/\mu$ and $\si_0:=\si^{(m-k-1)}\{0,0\}$, we have 
\begin{align*}
  \psi_k
  &\lesssim
  \prod_{l=1}^k\(1+N^{-l}\si_0\) 
  + \sum_{j=1}^{k-1} N^{-j}\si_0 \prod_{l=j+1}^k\(1+ N^{-l}\si_0\)+N^{-k}\si_0
\\
  &\lesssim
  \(1+ \sum_{j=1}^{k} N^{-j}\si_0\) \prod_{l=1}^k\(1+ N^{-l}\si_0\)
\\
  &\lesssim
  \(1+\sum_{j=1}^k N^{-j} \si_0\)
  \(1+\sum_{j=1}^\infty\(\ka_1 N^{-1}\)^j \si_0 \)^2
  \lesssim
  \(1+\frac{\ka_1N^{-1}}{1-\ka_1N^{-1}}\si_0\)^3
\\
  &\lesssim
  \(1+2\ka_1N^{-1}\si_0\)^3
  =1+6\ka_1N^{-1}\si_0+12\ka_1^2N^{-2}\si_0^2+8\ka_1^3N^{-3}\si_0^3
\\
  &\lesssim
  1+2N^{-1}\(3\ka_1+6\ka_1^3+4\ka_1^5\)\si_0
  \lesssim
  2\si_0
\end{align*}
for $N=\rho/\mu\ge 6\ka_1+12\ka_1^3+8\ka_1^5$, where we used the inequality 
$\prod^k_{l=1}(1+N^{-1})\le (1+\sum_{j=1}^\infty N^{-j})^2$. 
Consequently, we have 
\begin{equation}\label{phi_k+1/phi1}
  \frac{\phi_{1\Im}}{\phi_{(k+1)\Im}} \lesssim 2\si^{(m-k-1)}\{0,0\}.
\end{equation}
By (\ref{symbol_1/phi1}), (\ref{b_k+1-1}) and (\ref{phi_k+1/phi1}), we have 
\begin{align*}
  \frac{b_{k+1}}{\phi_{(k+1)\Im}}
 =&
  \frac{|b_{k+1}|}{\phi_{1\Im}}\frac{\phi_{1\Im}}{\phi_{(k+1)\Im}}
  \lesssim
  4\ka_1^3 \nu^k \si^{(m-k-1)}\{-k-1,k+1\}
\end{align*}
and
\begin{align*}
  \frac{|b_{k+1}|^2}{\phi_{(k+1)\Im}^2}
  \lesssim&
  16\ka_1^7 \nu^{2k} \(\frac{2\mu}{\rho}\)^{k+1} \si^{(m-k-1)}\{-k-1,k+1\}
\\
  \lesssim& 
  16\ka_1^7 \nu^{2k} \(\frac{2\mu^2}{\rho^2}\)^{k+1} \si^{(m-k-1)}\{0,0\}
  \lesssim 
  \frac12\si^{(m-k-1)}\{0,0\}. 
\end{align*}
Therefore, by Lemma \ref{symbol-p3} (v), we obtain 
\begin{align*}
  \th_{k+1}
 =&\frac{-\frac{\i b_{k+1}}{\phi_{(k+1)\Im}}
    \(1-\sqrt{1-\frac{|b_{k+1}|^2}{\phi_{(k+1)\Im}^2}}\)}
    {\frac{|b_{k+1}|^2}{\phi_{(k+1)\Im}^2}}
\\
 \lesssim&
  4\ka_1^3 \nu^k \si^{(m-k-1)}\{-k-1,k+1\}
  \(\frac12+ \frac12\si^{(m-k-1)}\{0,0\}\)
\\
 \lesssim&
  4\ka_1^4 \nu^k \si^{(m-k-1)}\{-k-1,k+1\}. 
\end{align*}
Consequently, the estimates (\ref{est-bk-th_k-e1}) and (\ref{est-bk-th_k-e2}) are established for any $k=1,\cdots,m$. 
\end{proof}

\subsection{Conclusion of the proof of Proposition \ref{PropL}}
We restrict ourselves $T_0=0$ without loss of generality. 
The estimate (\ref{est-bk-th_k-e2}) gives 
$|\th_k|\le (\nu\mu/\rho)^k \le 4^{-k}$, 
hence (\ref{V1}) is reduced to (\ref{Vm}) in $Z_{H}(m,\rho)$ by Lemma \ref{est-bk-th_k}. 
By (\ref{Vm}) and the estimate 
$|b_m|\le (\nu \mu)^m M_m |\xi|^{-m+1}$, 
which follows from (\ref{est-bk-th_k-e1}) with $k=m$, we have
\begin{align*}
  \pa_t|V_m(t,\xi)|^2
  \le& 2\(\phi_{m\Re}(t,\xi)+|b_m(t,\xi)|\)|V_m(t,\xi)|^2
\\
  \le& 2\(\phi_{m\Re}(t,\xi)+ (\nu\mu)^m M_m |\xi|^{-m+1}\)
  |V_m(t,\xi)|^2
\end{align*}
in $Z_{H}(m,\rho)$ uniformly on $[0,T)$. 
By Gronwall's inequality and (\ref{phi_k+1-R-rep}) with $k=m-1$, we have 
\begin{equation}\label{dVm}
  |V_m(t,\xi)|^2 
  \le 
    \frac{a(t)}{a(0)}
    \(\prod_{k=1}^{m-1}
    \frac{1-|\th_k(0,\xi)|^2}{1-|\th_k(t,\xi)|^2}
    \)
  \exp\(
  2T(\nu\mu)^m M_m |\xi|^{-m+1}
\)|V_m(0,\xi)|^2. 
\end{equation}
Here we note the following inequalities are established: 
\begin{align*}
  |V_{k+1}|^2
 &=|\Th_k^{-1}V_k|^2
 =\frac{1}{\(1-|\th_k|^2\)^2}
  \(\(1+|\th_k|^2\)|V_k|^2-4\Re\{\th_k(V_{k})_1\ol{(V_{k})_2}\}\)
\\
 &\begin{cases}
   \le\dfrac{\(1+|\th_k|\)^2}{\(1-|\th_k|^2\)^2}|V_k|^2
     =\dfrac{1}{\(1-|\th_k|\)^2}|V_k|^2,\\[3mm]
   \ge\dfrac{\(1-|\th_k|\)^2}{\(1-|\th_k|^2\)^2}|V_k|^2
     =\dfrac{1}{\(1+|\th_k|\)^2}|V_k|^2,
  \end{cases}
\end{align*}
\begin{align*}
  \frac{1}{\prod_{k=1}^{m-1}\(1+|\th_k|\)^2}|V_1|^2
  \le |V_m|^2 \le
  \frac{1}{\prod_{k=1}^{m-1}\(1-|\th_k|\)^2}|V_1|^2,
\end{align*}
\begin{align*}
\prod_{k=1}^{m-1} 
  \frac{\(1+|\th_k(0,\xi)|\)\(1+|\th_k(t,\xi)|\)}
       {\(1-|\th_k(0,\xi)|\)\(1-|\th_k(t,\xi)|\)}
 \le \(\prod_{k=1}^{m-1} \frac{1+4^{-k}}{1-4^{-k}}\)^2
 \le \(\frac{1+\sum_{k=1}^\infty 4^{-k}}{1-\sum_{k=1}^\infty 4^{-k}}\)^4
 =81,
\end{align*}
and
\begin{equation*}
  4\cE(t,\xi)\le |V_1(t,\xi)|^2 \le 4a_1^2 \cE(t,\xi). 
\end{equation*}
Then, by (\ref{dVm}) we have 
\begin{align*}
  \cE(t,\xi) 
  \le& \frac14|V_1(t,\xi)|^2
  \le \frac{\prod_{k=1}^{m-1}\(1+|\th_k(t,\xi)|\)^2}{4}|V_m(t,\xi)|^2
\\
  \le&
  \frac{a(t)\prod_{k=1}^{m-1}\(1+|\th_k(t,\xi)|\)^2}{4a(0)}
  \(\prod_{k=1}^{m-1} \frac{1-|\th_k(t,\xi)|^2}{1-|\th_k(t,\xi)|^2}\)
  \exp\(2T(\nu\mu)^m M_m |\xi|^{-m+1}\)
  |V_m(0,\xi)|^2
\\
  \le&
  \frac{a(t)}{4a(0)}
  \(\prod_{k=1}^{m-1} \frac{\(1+|\th_k(t,\xi)|\)^2}{\(1-|\th_k(0,\xi)|\)^2}\)
  \(\prod_{k=1}^{m-1} \frac{1-|\th_k(0,\xi)|^2}{1-|\th_k(t,\xi)|^2}\)
  \exp\(2T(\nu\mu)^m M_m |\xi|^{-m+1}\)
  |V_1(0,\xi)|^2
\\
  \le&
  81a_1^3
  \exp\(2T(\nu\mu)^m M_m |\xi|^{-m+1}\)
  \cE(0,\xi)
\end{align*}
for any $t\in(0,T)$ in $Z_{H}(m,\rho)$. 
Therefore, setting $\ka_0=8e^2\ka_1^5$, and noting 
$\nu\mu = 4 e^2 \mu_0 \ka_1^5 = \mu_0 \ka_0/2$, 
we have the estimate (\ref{PropL-e2}) with $C_0=81a_1^3$. 

\section{Proof of the main theorem}

\subsection{Energy conservation}
Let us denote $v(t,\xi)=\hat{u}(t,\xi)$, and define 
$\Phi(t)=\Phi(t;v)$ by 
\begin{equation*}
  \Phi(t;v)=1+\|\nabla v(t,\cd)\|^2. 
\end{equation*}
Then (\ref{K}) is reduced to the following problem: 
\begin{equation}\label{K0}
\begin{cases}
  \pa_t^2 v(t,\xi)+\Phi(t)|\xi|^2 v(t,\xi)=0,
  \quad
  (t,\xi)\in(0,\infty)\times \R^n,
\\
  v(0,\xi)=\hat{u}_0(\xi),
  \quad
  (\pa_t v)(0,\xi)=\hat{u}_1(\xi),
  \quad
  \xi\in \R^n. 
\end{cases}
\end{equation}
Let $\cE(t,\xi)$ be the energy functional defined by (\ref{cE}) to the solution of (\ref{K0}), and $E_0(t)=E_0(t;v)$ the total energy to the solution of (\ref{K0}) at $t$ defined by 
\begin{equation}\label{E0}
  E_0(t):=
  \frac12
  \(\|\pa_t v(t,\cd)\|^2
  +\int^{\||\cd|v(t,\cd)\|^2}_0
     \(1+y\)\,dy \).
\end{equation}
Then we have following properties of the energy conservation and corresponding estimates: 
\begin{Lemm}\label{EC}
If a strong solution of Kirchhoff equation $(\ref{K})$ exists on $[0,T)$, 
then the following estimates are established for any $t\in[0,T):$ 
\begin{equation}\label{E0cE}
  \int_{\R^n}\cE(t,\xi)\,d\xi \le E_0(t) = E_0(0)
\end{equation}
and
\begin{equation}\label{Phi0}
  1 \le \Phi(t) \le 1 + 2E_0(0). 
\end{equation}
\end{Lemm}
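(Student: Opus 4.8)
The plan is to reconstruct the classical conserved energy of the Kirchhoff equation, working directly with the Fourier-reduced problem \eqref{K0} so that no integration by parts in $x$ is needed, and then to compare this conserved quantity with $\int_{\R^n}\cE(t,\xi)\,d\xi$. The starting observation is that a strong solution $u\in\bigcap_{k=0}^2 C^k([0,T);H^{2-k})$ gives, via Plancherel, $|\cd|^2 v,\ |\cd|\,\pa_t v,\ \pa_t^2 v\in C([0,T);L^2(\R^n))$ with $v=\hat u$, so that $t\mapsto\|\pa_t v(t,\cd)\|^2$ and $t\mapsto\||\cd|v(t,\cd)\|^2$ are of class $C^1$ on $[0,T)$ with
\[
  \ts\frac{d}{dt}\|\pa_t v(t,\cd)\|^2 = 2\Re\!\int_{\R^n}\pa_t^2 v\,\ol{\pa_t v}\,d\xi,
  \qquad
  \ts\frac{d}{dt}\||\cd|v(t,\cd)\|^2 = 2\Re\!\int_{\R^n}|\xi|^2 v\,\ol{\pa_t v}\,d\xi.
\]

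Granting these two identities, I would substitute $\pa_t^2 v=-\Phi(t)|\xi|^2 v$ from \eqref{K0} into the first one and use that $\Phi(t)$ is a real scalar to obtain $\frac{d}{dt}\|\pa_t v(t,\cd)\|^2=-\Phi(t)\frac{d}{dt}\||\cd|v(t,\cd)\|^2$. Writing $y(t):=\||\cd|v(t,\cd)\|^2$ and recalling $\Phi(t)=1+y(t)$, this reads $\frac{d}{dt}\|\pa_t v\|^2=-(1+y)y'=-\frac{d}{dt}\int_0^{y}(1+s)\,ds$; comparing with the definition \eqref{E0} of $E_0$ yields $\frac{d}{dt}E_0(t)=0$, hence $E_0(t)=E_0(0)$ on $[0,T)$.

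For the remaining inequalities one only needs a direct computation. By Plancherel, $\int_{\R^n}\cE(t,\xi)\,d\xi=\tfrac12\|\pa_t v(t,\cd)\|^2+\tfrac12 y$, whereas $E_0(t)=\tfrac12\|\pa_t v(t,\cd)\|^2+\tfrac12\int_0^{y}(1+s)\,ds=\tfrac12\|\pa_t v(t,\cd)\|^2+\tfrac12 y+\tfrac14 y^2$, so $E_0(t)-\int_{\R^n}\cE(t,\xi)\,d\xi=\tfrac14 y^2\ge0$; combined with $E_0(t)=E_0(0)$ this is \eqref{E0cE}. Finally $\Phi(t)=1+y\ge1$ since $y\ge0$, and $y\le2\int_{\R^n}\cE(t,\xi)\,d\xi\le2E_0(0)$ gives $\Phi(t)\le1+2E_0(0)$, which is \eqref{Phi0}. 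The only real obstacle is the justification of the two differentiation-under-the-integral-sign formulas displayed above, together with the integrability of $|\xi|^2 v\,\ol{\pa_t v}$ uniformly on compact time intervals; both are immediate from the strong-solution regularity, and once they are secured the lemma follows by the elementary manipulations sketched here.
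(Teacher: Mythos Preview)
Your argument is correct and is essentially the paper's own proof spelled out in more detail: the paper simply says that the equality in \eqref{E0cE} follows from multiplying the equation in \eqref{K0} by $\pa_t v$ and integrating in $\xi$, and that the inequalities in \eqref{E0cE} and \eqref{Phi0} are immediate from the definition of $E_0$. Your computation $E_0(t)-\int_{\R^n}\cE(t,\xi)\,d\xi=\tfrac14 y(t)^2\ge 0$ and the bound $y\le 2\int\cE\,d\xi\le 2E_0(0)$ make those ``trivial'' steps explicit, but the route is the same.
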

\begin{proof}
The equality of (\ref{E0cE}) is straightforward by multiplying $\pa_t v(t,\xi)$ to the equation of (\ref{K0}) and integrating over $\R^n_\xi$. 
The inequalities of (\ref{E0cE}) and (\ref{Phi0}) are trivial by the definition of $E_0(t)$. 
\end{proof}
If $E_0(0)=0$ then (\ref{K}) has only a trivial solution. 
Therefore, we can suppose that $E_0(0)>0$ without loss of generality.

\subsection{Estimates of the higher order derivatives of $\Phi(t)$}
Let us estimate the higher order derivatives of $\Phi(t)$ in order to apply Proposition \ref{PropL} for $a(t)=\sqrt{\Phi(t)}$. 
Suppose that 
\begin{equation*}
  \int_{\R^n}|\xi|\cE(t,\xi)\,d\xi<\infty
\end{equation*}
for any $t\in [0,T)$. 
Then we have 
\begin{equation}\label{Phi'bdd}
  |\Phi'(t)|
 =\left|2\Re\int_{\R^n}|\xi|^2v_t(t,\xi)\ol{v(t,\xi)}\,d\xi\right|
  \le 2\int_{\R^n}|\xi|\cE(t,\xi)\,d\xi
  <\infty
\end{equation}
for $t\in [0,T)$. 
Noting the estimates (\ref{Phi0}), 
\begin{align*}
 &\pa_t \(\cE(t,\xi)+\frac12(\Phi(t)-1)|\xi|^2|v(t,\xi)|^2\)
\\
 &=\frac12\Phi'(t)|\xi|^2|v(t,\xi)|^2
 \le |\Phi'(t)|\(\cE(t,\xi)+\frac12(\Phi(t)-1)|\xi|^2|v(t,\xi)|^2\)
\end{align*}
and Gronwall's inequality, we have 
\begin{align*}
  \cE(t,\xi)+\frac12(\Phi(t)-1)|\xi|^2|v(t,\xi)|^2
  \le
     \exp\(\int^t_0|\Phi'(s)|\,ds\)
     \(\cE(0,\xi)+\frac12(\Phi(0)-1)|\xi|^2|v(0,\xi)|^2\). 
\end{align*}
It follows that 
\begin{equation}\label{estcE}
  \cE(t,\xi) \le
  (1+2E_0(0))\exp\(\int^t_0|\Phi'(s)|\,ds\)\cE(0,\xi). 
\end{equation}
If $\int_{\R^n}|\xi|^2\cE(0,\xi)\,d\xi <\infty$, 
then by (\ref{estcE}) we have 
\begin{align*}
  |\Phi''(t)|
  =&\left|2\Re\int_{\R^n}|\xi|^2\(-\Phi(t)|\xi|^2|v(t,\xi)|^2
  +|v_t(t,\xi)|^2\)\,d\xi\right|
\\
  \le &
  2(1+2E_0(0))\int_{\R^n}|\xi|^2\cE(t,\xi)\,d\xi
\\
  \le &
  2(1+2E_0(0))^2 \exp\(\int^t_0|\Phi'(s)|\,ds\)
  \int_{\R^n}|\xi|^2\cE(0,\xi)\,d\xi
\\
  <& \infty
\end{align*}
for any $t\in [0,T)$. 
By the same way, if $\int_{\R^n}|\xi|^k\cE(0,\xi)\,d\xi <\infty$ for any $k\in \N$, then we have $|\Phi^{(k)}(t)|<\infty$ on $[0,T)$. 
Precisely, we have the following lemmas for the higher derivatives of $\Phi(t)$: 
\begin{Lemm}\label{Phik}
We define $P_{l,j}=P_{l,j}(t;v)$ for $l=0,1,2$ and $j\in \N_0$ as follows$:$ 
\begin{align*}
  &P_{0,j}=\int_{\R^n}|\xi|^j |v_t(t,\xi)|^2\,d\xi, \\
  &P_{1,j}=\Re\int_{\R^n}|\xi|^{j+1} v_t(t,\xi)\ol{v(t,\xi)}\,d\xi, \\
  &P_{2,j}=\int_{\R^n}|\xi|^{j+2} |v(t,\xi)|^2\,d\xi. 
\end{align*}
Then for $k\in \N$ we have the following representations$:$ 
\begin{equation}\label{LemmPhik-e}
  \Phi^{(k)}(t) 
 =\sum_{(\al,\be,\ga)\in \La_k}
  I(\al,\be,\ga) \prod_{j=0}^k 
  P_{0,j}^{\al_j}P_{1,j}^{\be_j}P_{2,j}^{\ga_j},
\end{equation}
where 
\begin{equation}\label{LemmPhik-e1}
  \La_k:=\left\{(\al,\be,\ga)\in \(\N_0^{k+1}\)^3\:;\:
  \sum_{j=1}^k j(\al_j+\be_j+\ga_j)=k,\;\;
  \sum_{j=0}^k (\al_j+\be_j+\ga_j) 
  \le \left[\frac{k+2}{2}\right]
  \right\}
\end{equation}
and $I(\al,\be,\ga)$ are integers satisfying 
\begin{equation}\label{LemmPhik-e2}
  \sum_{(\al,\be,\ga)\in \La_k}|I(\al,\be,\ga)|
  \le 3^k k!. 
\end{equation}
\end{Lemm}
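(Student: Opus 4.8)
The plan is to prove Lemma~\ref{Phik} by induction on $k$, differentiating the recursion for $\Phi^{(k)}$ term by term. First I would record the base case: since
$\Phi'(t)=2\Re\int_{\R^n}|\xi|^2 v_t(t,\xi)\ol{v(t,\xi)}\,d\xi=2P_{1,0}$,
the representation \eqref{LemmPhik-e} holds for $k=1$ with $\La_1=\{(\al,\be,\ga)\}$ consisting of the single multi-index with $\be_0=1$ and all other entries zero, and with $I=2$; clearly $\sum|I|=2\le 3\cdot1!$ and the constraints in \eqref{LemmPhik-e1} are met. For the inductive step I would differentiate a generic monomial $\prod_{j=0}^k P_{0,j}^{\al_j}P_{1,j}^{\be_j}P_{2,j}^{\ga_j}$ using the product rule, which produces a sum of terms each of which replaces one factor $P_{l,j}$ by its time derivative $P_{l,j}'$.

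The key computational input is the differentiation rule for the $P_{l,j}$, obtained from the equation $\pa_t^2 v=-\Phi(t)|\xi|^2 v$ in \eqref{K0}. A direct calculation gives
\begin{align*}
  P_{0,j}' &= 2\Re\int_{\R^n}|\xi|^j v_{tt}\ol{v_t}\,d\xi = -2\Phi(t)P_{1,j}, \\
  P_{1,j}' &= \Re\int_{\R^n}|\xi|^{j+1}\(v_{tt}\ol{v}+|v_t|^2\)\,d\xi = P_{0,j+1}-\Phi(t)P_{2,j+1}, \\
  P_{2,j}' &= 2\Re\int_{\R^n}|\xi|^{j+2}v_t\ol{v}\,d\xi = 2P_{1,j+1}.
\end{align*}
The essential structural observation is that each rule either keeps the weight index $j$ fixed while multiplying by the scalar $\Phi(t)$ (whose derivatives must then also be expanded, but only via $\Phi'=2P_{1,0}$ — raising the total degree without raising the weight budget dangerously) or raises $j$ by one. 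One differentiation therefore raises $\sum_j j(\al_j+\be_j+\ga_j)$ by exactly $1$ (matching the new value $k+1$) and raises the total number of factors $\sum_j(\al_j+\be_j+\ga_j)$ by at most $1$ — in fact by $0$ when the weight index is raised and by $1$ only when a $\Phi$ is replaced by $P_{1,0}$. Since $[(k+3)/2]-[(k+2)/2]$ equals $1$ when $k$ is even and $0$ when $k$ is odd, one has to track the parity carefully to see the factor bound $\sum_j(\al_j+\be_j+\ga_j)\le[(k+2)/2]$ is preserved; the point is that applying the rule for $P_{0,j}$ or $P_{2,j}$ does not increase the factor count, and increasing the factor count (by invoking $\Phi'$) consumes one unit of weight, so one cannot do it more than $[k/2]$ times starting from degree $1$.

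For the coefficient bound \eqref{LemmPhik-e2}, I would argue crudely: differentiating $\Phi^{(k)}$, each monomial with $N=\sum_j(\al_j+\be_j+\ga_j)$ factors produces at most $N$ new terms from the product rule, plus the contributions from expanding $\Phi$ and its derivatives; bounding $N\le[(k+2)/2]\le k$ and absorbing the fixed numerical constants ($2$, $-2$, etc.\ from the rules above, each at most $2$ in absolute value) one finds $\sum_{\La_{k+1}}|I|\le 3k\cdot 3^k k!\le 3^{k+1}(k+1)!$, with room to spare; the precise bookkeeping is routine once the $3^k k!$ form is chosen to leave slack. The main obstacle I anticipate is not any single step but the careful verification that the degree constraint with the floor function $[(k+2)/2]$ is exactly preserved — one must check separately the parity cases and confirm that the only way to increase the factor count is through $\Phi'=2P_{1,0}$, which simultaneously costs a unit of the weight budget, so the two constraints in $\La_k$ remain compatible after differentiation. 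Everything else is a mechanical consequence of the three differentiation formulas and Leibniz's rule.
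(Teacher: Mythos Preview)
Your approach is the paper's approach (induction on $k$, Leibniz rule, and the three differentiation formulas for $P_{l,j}$), but you have systematic index errors stemming from the definition $P_{1,j}=\Re\int|\xi|^{j+1}v_t\ol v\,d\xi$ (note the exponent $j+1$, not $j$). Concretely: the integral $\Re\int|\xi|^2 v_t\ol v\,d\xi$ is $P_{1,1}$, not $P_{1,0}$, so the base case is $\Phi'(t)=2P_{1,1}$ with $\be_1=1$; your choice $\be_0=1$ violates the weight constraint $\sum_{j\ge1} j(\al_j+\be_j+\ga_j)=1$. Likewise $P_{0,j}'=-2\Phi\,\Re\int|\xi|^{j+2}v\ol{v_t}\,d\xi=-2\Phi\,P_{1,j+1}$, not $-2\Phi\,P_{1,j}$. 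With the correct formulas \emph{every} rule raises the index $j$ by one (and, for $P_{0,j}'$ and $P_{1,j}'$, simultaneously produces a factor $\Phi=1+P_{2,0}$); there is no branch that ``keeps the weight index fixed.''

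This also means your description of how the factor count grows is off: the extra factor is not produced by ``replacing $\Phi$ by $P_{1,0}$'' but by expanding $\Phi=1+P_{2,0}$ inside the rules for $P_{0,j}'$ and $P_{1,j}'$, which yields a new $P_{2,0}$ factor of weight zero. Once you fix these formulas, the weight identity $\sum j(\al_j+\be_j+\ga_j)=k$ is immediate, and the coefficient bound follows exactly as the paper does: each monomial with exponent triple $(\al,\be,\ga)$ generates at most $\sum_l(4\al_l+3\be_l+2\ga_l)\le 4\lfloor(k+2)/2\rfloor\le 3(k+1)$ children (two or three branches per factor, counted with their numerical coefficients), giving $\sum|I|\le 3^{k+1}(k+1)!$. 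The only genuinely delicate point, as you note, is the preservation of $\sum(\al_j+\be_j+\ga_j)\le\lfloor(k+2)/2\rfloor$; the paper asserts it rather than proving it. The key observation is that the terms with the maximal number of factors (when $k$ is even) consist entirely of $P_{2,\cdot}$ factors, and differentiating a $P_{2,j}$ never spawns a new factor; this can be verified by strengthening the induction hypothesis slightly.
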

\begin{proof}
Noting $\Phi^{(1)}(t)=2P_{1,1}$, (\ref{LemmPhik-e}) is valid for $k=1$. 
Let us suppose that (\ref{LemmPhik-e}) is valid for a $k(\ge 1)$. 
Noting the equalities 
\begin{align*}
  \frac{d}{dt}P_{0,l}
 =-2\Phi(t)\Re \int|\xi|^{l+2} v_t \ol{v}\,d\xi
 =-2\(1+P_{2,0}\)P_{1,l+1},
\end{align*}
\begin{align*}
  \frac{d}{dt}P_{1,l}
=-\(1+P_{2,0}\)\Re\int|\xi|^{l+3} |v|^2\,d\xi + P_{0,l+1}
=-\(1+P_{2,0}\)P_{2,l+1} + P_{0,l+1}
\end{align*}
and
\begin{align*}
  \frac{d}{dt}P_{2,l}
 =&2\Re \int|\xi|^{l+2} v_t\ol{v}\,d\xi
 =2P_{1,l+1},
\end{align*}
we have 
\begin{align*}
\frac{d}{dt}& \prod_{j=0}^k 
  P_{0,j}^{\al_j}P_{1,j}^{\be_j}P_{2,j}^{\ga_j}
\\
=&\sum_{{0\le l \le k}\atop{\al_l\not=0}} 
  \(
 -2\al_l (1+P_{2,0}) P_{1,l+1} 
  P_{0,l}^{\al_l-1}
  \)
  \(\prod_{{0\le j \le k}\atop{j\not=l}}P_{0,j}^{\al_j}\)
  \(\prod_{{0\le j \le k}} P_{1,j}^{\be_j}P_{2,j}^{\ga_j}\)
\\
&+\sum_{{0\le l \le k}\atop{\be_l\not=0}}
  \(
  \be_l\(-(1+P_{2,0}) P_{2,l+1} + P_{0,l+1}\)
  P_{1,l}^{\be_l-1}
  \)
  \(\prod_{{0\le j \le k}\atop{j\not=l}}P_{1,j}^{\be_j}\)
  \(\prod_{{0\le j \le k}} P_{0,j}^{\al_j}P_{2,j}^{\ga_j}\)
\\
&+\sum_{{0\le l \le k}\atop{\ga_l\not=0}}
  2\ga_l P_{1,l+1}
  P_{2,l}^{\ga_l-1}
  \(\prod_{{0\le j \le k}\atop{j\not=l}}P_{2,j}^{\ga_j}\)
  \(\prod_{{0\le j \le k}} P_{0,j}^{\al_j}P_{1,j}^{\be_j}\). 
\end{align*}
It follows that the representation (\ref{LemmPhik-e}) 
is established for $k+1$. 
Moreover, if (\ref{LemmPhik-e2}) is valid, then we have
\begin{align*}
  \sum_{(\al,\be,\ga)\in \La_{k+1}}
  |I(\al,\be,\ga)|
  &\le
  \sum_{l=0}^k(4\al_l+3\be_l+2\ga_l)
  \sum_{(\al,\be,\ga)\in \La_{k}} |I(\al,\be,\ga)|
\\
  &\le
  4 \left[\frac{k+2}{2}\right] 3^k k!
  \le
    3^{k+1}(k+1)!,
\end{align*}
which concludes the proof of the lemma. 
\end{proof}
\begin{Lemm}\label{estPhik}
There exists a positive constant $\nu_0$ such that 
\begin{equation*}
  \left|\Phi^{(k)}(t)\right|
  \le \nu_0^k k! \max_{{h\in \N_0^k}\atop{|h|=k}}
  \left\{\prod_{l=1}^k \int_{\R^n}|\xi|^{h_l}\cE(t,\xi)\,d\xi\right\}
\end{equation*}
for any $k\in \N$, 
where $h=(h_1,\dots,h_k)\in \N_0^k$ and $|h|=\sum_{l=1}^k h_l$. 
\end{Lemm}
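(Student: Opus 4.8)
The plan is to combine the combinatorial representation of $\Phi^{(k)}(t)$ from Lemma~\ref{Phik} with crude but effective bounds on each factor $P_{l,j}$ in terms of weighted energy integrals. First I would observe that, by Cauchy--Schwarz and the elementary inequality $|ab|\le\frac12(a^2+b^2)$, each of the three basic quantities satisfies
\begin{equation*}
  |P_{0,j}|,\ |P_{1,j}|,\ |P_{2,j}|
  \le 2\int_{\R^n}|\xi|^{j}\cE(t,\xi)\,d\xi
  \quad(\text{or }|\xi|^{j+1},|\xi|^{j+2}\text{ respectively}),
\end{equation*}
since $|v_t|^2\le 2\cE$, $|\xi|^2|v|^2\le 2\cE$ and $|\xi|^2|v_t||v|\le 2\cE$. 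Hence for a multi-index $(\al,\be,\ga)\in\La_k$ the product $\prod_j P_{0,j}^{\al_j}P_{1,j}^{\be_j}P_{2,j}^{\ga_j}$ is bounded by $2^{|\al|+|\be|+|\ga|}$ times a product of exactly $N:=\sum_j(\al_j+\be_j+\ga_j)$ weighted energy integrals whose total homogeneity in $|\xi|$ is $\sum_j j(\al_j+\be_j+\ga_j)+\sum_j(\be_j+2\ga_j)$.

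The key bookkeeping point is the constraint defining $\La_k$: we have $\sum_{j\ge1}j(\al_j+\be_j+\ga_j)=k$ and $N\le[(k+2)/2]$. I would bound the extra weight $\sum_j(\be_j+2\ga_j)$ by $2N\le k+2$, so that the total homogeneity of the product of $N$ integrals never exceeds $k+(k+2)=2k+2$; after absorbing a harmless power of $|\xi|$ (using $|\xi|\ge\rho\ge1$ on the relevant region, or simply noting $\cE$ integrals of all orders are assumed finite) one may take the exponents $h_l$ with $\sum h_l$ at most $k$ by redistributing, or one passes to a slightly larger constant. The cleanest route is to let $h=(h_1,\dots,h_N)$ record the actual homogeneities and bound the product by the stated maximum over $|h|=k$ after noting that each individual integral $\int|\xi|^h\cE\,d\xi$ is monotone in $h$ once $|\xi|\ge1$ dominates, so only the top total weight matters; then $2^N\le 2^{[(k+2)/2]}\le 2^{k}$ is absorbed into the constant.

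Combining these estimates with the coefficient bound $\sum_{(\al,\be,\ga)\in\La_k}|I(\al,\be,\ga)|\le 3^k k!$ from \eqref{LemmPhik-e2} gives
\begin{equation*}
  |\Phi^{(k)}(t)|
  \le 3^k k!\cdot 2^k \max_{|h|=k}\prod_{l=1}^k\int_{\R^n}|\xi|^{h_l}\cE(t,\xi)\,d\xi,
\end{equation*}
so the lemma holds with $\nu_0=6$ (or any convenient value after the homogeneity-redistribution step), which is independent of $t$ and $k$. The main obstacle is purely a matter of careful accounting: making sure the number of energy-integral factors is exactly $k$ (padding with factors $\int\cE\,d\xi\ge$ a positive constant, which is legitimate since $E_0(0)>0$ was assumed, hence $\int\cE(t,\xi)\,d\xi\ge c>0$ if one normalizes, or simply allowing fewer factors and noting the empty-weight integral is $2E_0(0)$) and that the total homogeneity can be arranged to equal $k$ rather than merely be bounded by $2k+2$ — this is handled by splitting each $P_{l,j}$ bound to distribute the surplus weight across the $\cE$-factors using Cauchy--Schwarz once more, which is routine. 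Everything else is a direct substitution of the $|P_{l,j}|$ bounds into \eqref{LemmPhik-e}.
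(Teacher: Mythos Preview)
You have the right overall plan---feed Lemma~\ref{Phik} into pointwise bounds on the $P_{l,j}$---and you even write down the correct estimate $|P_{l,j}|\le 2\int_{\R^n}|\xi|^{j}\cE(t,\xi)\,d\xi$ for all three values of $l$. The gap is that you immediately abandon it. In computing the ``total homogeneity'' you switch to the cruder bounds with weights $j+1$ and $j+2$ for $P_{1,j}$ and $P_{2,j}$, and this manufactures the surplus $\sum_j(\be_j+2\ga_j)$ which you then try to get rid of by a sequence of ad hoc moves (monotonicity of $\int|\xi|^{h}\cE\,d\xi$ in $h$ ``once $|\xi|\ge1$ dominates'', redistribution by Cauchy--Schwarz, etc.). None of these work as stated: the integrals are over all of $\R^n$, not just $\{|\xi|\ge1\}$, so there is no monotonicity in $h$; and there is no clean way to trade excess $|\xi|$-weight for a constant independent of $t$ and $k$. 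The proof as written does not close.

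The paper avoids this detour entirely by sticking with the weight-$j$ bound you first wrote: from $P_{0,j}+P_{2,j}=2\int|\xi|^{j}\cE\,d\xi$ and $|P_{1,j}|\le\int|\xi|^{j}\cE\,d\xi$ one gets $|P_{l,j}|\le 2\int|\xi|^{j}\cE\,d\xi$ for $l=0,1,2$, so after multiplying out, the total $|\xi|$-weight is exactly $\sum_{j\ge1} j(\al_j+\be_j+\ga_j)=k$ by the first constraint in $\La_k$---no surplus, no redistribution. The factors with $j=0$ are bounded by $E_0(0)$ via energy conservation (Lemma~\ref{EC}) and contribute $E_0(0)^{\al_0+\be_0+\ga_0}\le\max\{1,E_0(0)\}^{k}$; this is why $\nu_0$ must depend on $E_0(0)$ (the paper takes $\nu_0\ge 6\max\{1,E_0(0)\}$), and your proposed $\nu_0=6$ cannot be right. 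After that, each admissible $(\al,\be,\ga)$ gives a product of $\sum_{j\ge1}m_j\le[(k+2)/2]$ energy integrals with weights summing to $k$, which one recognizes as one of the candidates in $\max_{|h|=k}\prod_{l=1}^{k}\int|\xi|^{h_l}\cE\,d\xi$ after padding the remaining slots with $h_l=0$. Your padding remark here is in the right spirit; to make it honest one either absorbs a factor $\max\{1,(\int\cE\,d\xi)^{-1}\}^{k}$ into $\nu_0$ (a uniform positive lower bound on $\int\cE\,d\xi$ follows from $E_0(t)=E_0(0)>0$), or---as the paper does---simply notes that the constraint in \eqref{LemmPhik-e1} already places the product among the competitors for the maximum.
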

\begin{proof}
Noting $P_{0,j}+P_{2,j}=2\int_{\R^n}|\xi|^{j}\cE(t,\xi)\,d\xi$ 
and $|P_{1,j}|\le \int_{\R^n}|\xi|^{j}\cE(t,\xi)\,d\xi$, 
by (\ref{E0cE}) and Lemma \ref{Phik} we have 
\begin{align*}
  \left|\Phi^{(k)}(t)\right|
\le &
  2^{\sum_{j=0}^{k}\(\al_j+\be_j+\ga_j\)}
  E_0(t)^{\al_0+\be_0+\ga_0}
  \sum_{(\al,\be,\ga)\in \La_k}
  \left|I(\al,\be,\ga)\right| 
  \prod_{j=1}^k 
  \(\int_{\R^n}|\xi|^j \cE(t,\xi)\,d\xi\)^{\al_j+\be_j+\ga_j}
\\
\le &
  \(6 \max\{1,E_0(0)\} \)^k k! 
  \max_{(\al,\be,\ga)\in \La_k}
  \left\{
  \prod_{j=1}^k 
  \(\int_{\R^n}|\xi|^j 
  \cE(t,\xi)\,d\xi\)^{\al_j+\be_j+\ga_j}
  \right\}. 
\end{align*}
Therefore, noting (\ref{LemmPhik-e1}) we conclude the proof for 
$\nu_0 \ge 6 \max\{1,E_0(0)\}$. 
\end{proof}

\subsection{Prolongation of the local solution}
The existence of a unique local solution in Sobolev space is known. 
Hence we can define $T$ by 
\begin{equation*}
  T:=\sup\left\{\tau>0\:;\:
  \int_{\R^n} |\xi|\cE(t,\xi)\,d\xi<\infty,\;\; 
  \forall t\in [0,\tau] \right\}. 
\end{equation*}
Here (\ref{Phi'bdd}), (\ref{estcE}) and Remark \ref{rem-strongsol} imply that $T$ is the supremum of the existence time of the strong solution to (\ref{K}). 
Let us denote 
\begin{equation*}
  M_k:=k! L_k,
  \;\;
  \cM(r):=\mM\(r;\{M_k\}\),
  \;\;
  \tcM(r):=\mM\(r;\{L_k\}\). 
\end{equation*}
Then we shall prove $T=\infty$ by contradiction if there exist $\eta>0$, 
$\{\rho_j\}\in {\cal L}$ and $K_0>0$ such that 
\begin{equation*}
  \sup_{j}\left\{\int_{|\xi|\ge\rho_j}
  \tcM\(\frac{|\xi|}{\rho_j}\)
    \exp\(\frac{\eta|\xi|}{\cM\(\frac{|\xi|}{\rho_j}\)}\)
    \cE(0,\xi)\,d\xi\right\}
  \le K_0. 
\end{equation*}

Suppose that $T<\infty$, and define $T_0$ by 
\begin{equation*}
  T_0:=\max\{T-\eta,0\}. 
\end{equation*}
Then there exists a positive constant $C=C(T,\eta)$ such that 
\begin{equation}\label{cEbddT0}
  \max_{t\in[0,T_0]}\left\{\int_{\R^n}|\xi|\cE(t,\xi)\,d\xi\right\}\le C. 
\end{equation}
Moreover, we have the following lemma: 
\begin{Lemm}\label{lemm-est_<T_0}
The following estimate is established$:$ 
\begin{equation}\label{lemm-est_<T_0-e1}
  \sup_{j}\sup_{l\ge 1}
  \left\{\int_{|\xi|\ge\rho_j}
  \frac{1}{L_l}\(\frac{|\xi|}{\rho_j}\)^l
  \cE(t,\xi)\,d\xi\right\}
  \le (1+2E_0(0)) e^{2CT_0} K_0 
\end{equation}
for any $t\in[0,T_0]$. 
Moreover, we have 
\begin{equation}\label{lemm-est_<T_0-e2}
  \lim_{j\to\infty}
  \max_{t\in[0,T_0]}
  \sup_{l\ge 1} \left\{\int_{|\xi|\ge\rho_j}
  \frac{1}{L_l}\(\frac{|\xi|}{\rho_j}\)^l
  \cE(t,\xi)\,d\xi\right\}=0. 
\end{equation}
\end{Lemm}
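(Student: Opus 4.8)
The plan is to exploit the energy estimate \eqref{estcE} together with the defining bound $G(\rho_j,\eta,\{M_k\},\{L_k\})\le K_0$ and the relation $M_k=k!L_k$ to transfer the weighted $L^2$ bound on the data at $t=0$ to a weighted bound at time $t\in[0,T_0]$. First I would fix $t\in[0,T_0]$ and $l\ge 1$, and write
\begin{equation*}
  \int_{|\xi|\ge\rho_j}\frac{1}{L_l}\(\frac{|\xi|}{\rho_j}\)^l\cE(t,\xi)\,d\xi
  \le (1+2E_0(0))\exp\(\int_0^t|\Phi'(s)|\,ds\)
  \int_{|\xi|\ge\rho_j}\frac{1}{L_l}\(\frac{|\xi|}{\rho_j}\)^l\cE(0,\xi)\,d\xi
\end{equation*}
using \eqref{estcE}. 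By \eqref{Phi'bdd} and \eqref{cEbddT0} the integral $\int_0^t|\Phi'(s)|\,ds$ is bounded by $2CT_0$ on $[0,T_0]$, which produces the constant factor $(1+2E_0(0))e^{2CT_0}$ appearing in \eqref{lemm-est_<T_0-e1}. So the task reduces to showing that the data integral $\int_{|\xi|\ge\rho_j}L_l^{-1}(|\xi|/\rho_j)^l\cE(0,\xi)\,d\xi$ is bounded above by $K_0$, uniformly in $j$ and $l$.

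The key pointwise inequality is that the weight $\tcM(|\xi|/\rho_j)\exp(\eta|\xi|/\cM(|\xi|/\rho_j))$ dominates $L_l^{-1}(|\xi|/\rho_j)^l$ for every $l\ge 1$. Indeed, dropping the exponential factor (which is $\ge 1$), it suffices that $\tcM(r)=\mM(r;\{L_k\})\ge r^l/L_l$ for all $r>0$ and all $l\ge1$, which is immediate from the very definition $\mM(r;\{L_k\})=\sup_{k\ge1}\{r^k/L_k\}$. Hence termwise
\begin{equation*}
  \frac{1}{L_l}\(\frac{|\xi|}{\rho_j}\)^l
  \le \tcM\(\frac{|\xi|}{\rho_j}\)
  \le \tcM\(\frac{|\xi|}{\rho_j}\)\exp\(\frac{\eta|\xi|}{\cM\(\frac{|\xi|}{\rho_j}\)}\)
\end{equation*}
on $|\xi|\ge\rho_j$, so integrating against $\cE(0,\xi)$ gives $\int_{|\xi|\ge\rho_j}L_l^{-1}(|\xi|/\rho_j)^l\cE(0,\xi)\,d\xi\le G(\rho_j,\eta,\{M_k\},\{L_k\})\le K_0$, uniformly in $j,l$. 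This proves \eqref{lemm-est_<T_0-e1}.

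For \eqref{lemm-est_<T_0-e2} the plan is a dominated-convergence / tail argument: as $j\to\infty$ the region $\{|\xi|\ge\rho_j\}$ shrinks to a null set, and the family of weights is uniformly integrable against $\cE(0,\xi)$ because of the uniform bound just obtained. More precisely, for the supremum over $l$ I would note that on $\{|\xi|\ge\rho_j\}$ with $\rho_j\ge1$ one has $(|\xi|/\rho_j)^l/L_l\le \tcM(|\xi|/\rho_j)$ independently of $l$, so it suffices to show $\int_{|\xi|\ge\rho_j}\tcM(|\xi|/\rho_j)e^{\eta|\xi|/\cM(|\xi|/\rho_j)}\cE(0,\xi)\,d\xi\to0$; but that integral equals $G(\rho_j,\eta,\{M_k\},\{L_k\})$ and as $\rho_j\to\infty$ the integrand, dominated on $|\xi|\ge\rho_1$ by the fixed $K_0$-bounding weight times $\cE(0,\xi)$ (using monotonicity of $\mM$ and the fact that $|\xi|/\cM(|\xi|/\rho_j)\le|\xi|/\cM(1)$ decays in $\rho_j$ pointwise after some care — here one uses $\cM(r)\ge r$ and $\cM$ increasing), has pointwise limit $0$ on the vanishing domain, so dominated convergence yields the claim; the $\max_{t\in[0,T_0]}$ is harmless since \eqref{lemm-est_<T_0-e1} already bounds the time-dependent factor uniformly.

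The main obstacle I anticipate is the uniformity in $l$ for the second assertion together with making the decay in $j$ rigorous: one must check that a single integrable majorant (independent of $j$ and $l$) dominates the integrands $L_l^{-1}(|\xi|/\rho_j)^l\cE(0,\xi)\mathbf{1}_{|\xi|\ge\rho_j}$. The bound $L_l^{-1}(r)^l\le\tcM(r)$ handles the $l$-uniformity cleanly, but one then needs that $\tcM(|\xi|/\rho_j)e^{\eta|\xi|/\cM(|\xi|/\rho_j)}$ is bounded, for $|\xi|\ge\rho_j\ge\rho_1$, by the fixed weight with $\rho=\rho_1$ — this requires checking that $r\mapsto\mM(r/\rho)$ is decreasing in $\rho$ (clear, since $\mM$ is increasing) and that $\eta|\xi|/\cM(|\xi|/\rho_j)$ does not blow up, which follows because $\cM(r)=\mM(r;\{k!L_k\})$ grows faster than linearly (as $M_k/M_{k-1}\to\infty$ by Lemma \ref{lc-prop}), so $|\xi|/\cM(|\xi|/\rho_j)$ stays bounded on $|\xi|\ge\rho_j$. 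Once that majorant is in hand, dominated convergence over the shrinking domains $\{|\xi|\ge\rho_j\}$ closes the argument.
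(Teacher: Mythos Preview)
Your argument for \eqref{lemm-est_<T_0-e1} is correct and matches the paper's proof exactly: apply \eqref{estcE}, bound $\int_0^t|\Phi'|\,ds\le 2CT_0$ via \eqref{Phi'bdd} and \eqref{cEbddT0}, and then use $L_l^{-1}r^l\le\tcM(r)\le\tcM(r)\exp(\eta|\xi|/\cM(r))$ to reduce to the hypothesis $G(\rho_j,\eta,\{M_k\},\{L_k\})\le K_0$.

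For \eqref{lemm-est_<T_0-e2}, however, there is a genuine gap. You reduce to showing $G(\rho_j,\eta,\{M_k\},\{L_k\})\to 0$, and then try to dominate the integrand $\tcM(|\xi|/\rho_j)\exp\bigl(\eta|\xi|/\cM(|\xi|/\rho_j)\bigr)$ by the corresponding $\rho_1$-weight. The $\tcM$ factor is fine, but the exponential goes the \emph{wrong} way: since $\rho_j\ge\rho_1$ and $\cM$ is increasing, $\cM(|\xi|/\rho_j)\le\cM(|\xi|/\rho_1)$, so $\exp\bigl(\eta|\xi|/\cM(|\xi|/\rho_j)\bigr)\ge\exp\bigl(\eta|\xi|/\cM(|\xi|/\rho_1)\bigr)$. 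Your fallback claim that $|\xi|/\cM(|\xi|/\rho_j)$ ``stays bounded'' on $|\xi|\ge\rho_j$ is also false: at $|\xi|=\rho_j$ one has $\cM(1)=1$, so this quantity equals $\rho_j\to\infty$. Hence no $j$-independent integrable majorant is obtained this way, and in fact there is no reason for $G(\rho_j,\ldots)$ itself to tend to~$0$ --- the hypothesis only gives $\sup_j G(\rho_j,\ldots)\le K_0$.

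The fix is to drop the exponential altogether, since it was never needed to bound $L_l^{-1}(|\xi|/\rho_j)^l$. The paper simply uses $(|\xi|/\rho_j)^l\le(|\xi|/\rho_1)^l$ for $\rho_j\ge\rho_1$, together with the energy estimate $\cE(t,\xi)\le(1+2E_0(0))e^{2CT_0}\cE(0,\xi)$, to get
\[
  \int_{|\xi|\ge\rho_j}\frac{1}{L_l}\Bigl(\frac{|\xi|}{\rho_j}\Bigr)^l\cE(t,\xi)\,d\xi
  \le (1+2E_0(0))e^{2CT_0}\int_{|\xi|\ge\rho_j}\frac{1}{L_l}\Bigl(\frac{|\xi|}{\rho_1}\Bigr)^l\cE(0,\xi)\,d\xi.
\]
The right-hand integrand is dominated, uniformly in $l\ge1$, by $\tcM(|\xi|/\rho_1)\cE(0,\xi)$, whose integral over $\{|\xi|\ge\rho_1\}$ is at most $K_0$ (bound the missing exponential below by $1$). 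Thus the right-hand side is the tail over $\{|\xi|\ge\rho_j\}$ of a fixed finite integral, and tends to $0$ uniformly in $t\in[0,T_0]$ and $l\ge1$ as $j\to\infty$.
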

\begin{proof}
By (\ref{Phi'bdd}), (\ref{estcE}) and (\ref{cEbddT0}) we have 
\begin{align*}
  \frac{1}{L_l}\(\frac{|\xi|}{\rho_j}\)^l \cE(t,\xi)
\le&
  (1+2E_0(0))e^{2CT_0} 
  \frac{1}{L_l}\(\frac{|\xi|}{\rho_j}\)^l \cE(0,\xi)
\\
\le&
  (1+2E_0(0))e^{2CT_0} 
  \tcM\(\frac{|\xi|}{\rho_j}\) 
  \exp\(\frac{\eta|\xi|}{\cM\(\frac{|\xi|}{\rho_j}\)}\)\cE(0,\xi)
\end{align*}
for any $j\in \N$. 
Integrating over $|\xi|\ge \rho_j$ we have (\ref{lemm-est_<T_0-e1}). 
Moreover, by (\ref{lemm-est_<T_0-e1}) we have 
\begin{align*}
  \int_{|\xi|\ge \rho_1} \frac{1}{L_l}\(\frac{|\xi|}{\rho_1}\)^l
  \cE(t,\xi)\,d\xi
 =&
  \sup_{j}
  \left\{\int_{|\xi|\ge \rho_j}
  \frac{1}{L_l}\(\frac{|\xi|}{\rho_j}\)^l
  \cE(t,\xi)\,d\xi\right\}
\\
 \le& (1+2E_0(0)) e^{2CT_0} K_0,
\end{align*}
it follows that 
\begin{align*}
  \int_{|\xi|\ge\rho_j} \frac{1}{L_l}\(\frac{|\xi|}{\rho_j}\)^l
  \cE(t,\xi)\,d\xi
  \le&
  \int_{|\xi|\ge\rho_j} \frac{1}{L_l}\(\frac{|\xi|}{\rho_1}\)^l
  \cE(t,\xi)\,d\xi 
  \to 0
  \;\;
  (j \to\infty)
\end{align*}
uniformly with respect to $t\in[0,T_0]$ and $l\ge 1$. 
\end{proof}

By (\ref{lemm-est_<T_0-e2}) there exists $j_0\in \N$ such that 
\begin{equation*}
  \max_{t\in[0,T_0]}
  \sup_{l\ge 1} \left\{\int_{|\xi|\ge\rho_j}
  \frac{1}{L_l}\(\frac{|\xi|}{\rho_j}\)^l
  \cE(t,\xi)\,d\xi\right\}
  \le \frac{E_0(0)}{2}
\end{equation*}
for any $j\ge j_0$. 
Moreover, by the continuity of $\cE(t,\xi)$, there exists $T_1\in(T_0,T)$ 
such that 
\begin{equation*}
  \sup_{t\in[0,T_1)}
  \sup_{l\ge 1} \left\{\int_{|\xi|\ge\rho_j}
  \frac{1}{L_l}\(\frac{|\xi|}{\rho_j}\)^l
  \cE(t,\xi)\,d\xi\right\}
  <E_0(0)
\end{equation*}
for any $j\ge j_0$. 
We shall prove that there exists $j_1 \ge j_0$ and $\si_1\ge 1$ such that we have a contradiction if there exists $T_1\in (T_0,T)$ such that 
\begin{equation}\label{maxT1=E0}
  \sup_{t\in[0,T_1)}
  \sup_{l\ge 1} \left\{\int_{|\xi|\ge\rho_{j_1}}
  \frac{1}{L_l}\(\frac{|\xi|}{\si_1 \rho_{j_1}}\)^l
  \cE(t,\xi)\,d\xi\right\}
  =E_0(0), 
\end{equation}
which implies $T=T_1$. 
However, (\ref{maxT1=E0}) with $T_1=T$ implies $\sup_{t\in[0,T)}\int_{\R^n}|\xi|\cE(t,\xi)\,d\xi<\infty$, and this estimate contradicts $T<\infty$. 

Suppose that there exists $T_1\in(T_0,T)$ such that 
(\ref{maxT1=E0}) is valid. 
Noting (\ref{E0cE}), for any $t\in[0,T_1]$ and $l\ge 1$ we have 
\begin{align*}
  \int_{\R^n}|\xi|^l \cE(t,\xi)\,d\xi
=&\(\si_1 \rho_{j_1}\)^l L_l 
  \int_{|\xi|\ge \rho_{j_1}} \frac{1}{L_l}
  \(\frac{|\xi|}{\si_1\rho_{j_1}}\)^l 
  \cE(t,\xi)\,d\xi
  + \int_{|\xi|\le \rho_{j_1}}|\xi|^l \cE(t,\xi)\,d\xi
\\
\le&
  E_0(0)\(\si_1 \rho_{j_1}\)^l L_l
  + \rho_{j_1}^l \int_{|\xi|\le \rho_{j_1}} \cE(t,\xi)\,d\xi
\\
\le&
  2E_0(0)\(\si_1 \rho_{j_1}\)^l L_l
\end{align*}
for any $j_1\ge j_0$ and $\si_1\ge 1$. 
By Lemma \ref{estPhik} we have 
\begin{align*}
  \left|\Phi^{(k)}(t)\right|
  \le &
  \nu_0^k k! \max_{{h\in \N_0^k}\atop{|h|=k}}
    \left\{\prod_{l=1}^k 2E_0(0)\(\si_1 \rho_{j_1}\)^{h_l} L_{h_l} \right\}
\\
  \le&
  \(2E_0(0)\si_1 \rho_{j_1} \nu_0\)^k k! \max_{{h\in \N_0^k}\atop{|h|=k}}
    \left\{\prod_{l=1}^k \frac{M_{h_l}}{h_l!} \right\} 
  \le
  \nu_1^k M_k
\end{align*}
for any $t\in[0,T_1)$ and $k\in \N$, 
where $\nu_1=2E_0(0)\si_1 \rho_{j_1} \nu_0$, and we used the following inequality: 
\begin{equation*}
  \prod_{l=1}^k \frac{M_{h_l}}{h_l!}
  \le \frac{M_{h_1+\cdots+h_k}}{(h_1+\cdots+h_k)!}
  =\frac{M_k}{k!},
\end{equation*}
which follows from (\ref{lc-binomial-e1}). 
Moreover, by Lemma \ref{Lemm_sqrt_f} in Appendix, there exists a positive constant $\mu_0$ such that 
\begin{equation*}
  \left|\frac{d^k}{dt^k}\sqrt{\Phi(t)}\right|
  \le \mu_0^k M_k
\end{equation*}
for any $k\ge 1$; 
thus we can apply Proposition \ref{PropL} for $a(t)=\sqrt{\Phi(t)}$ 
and $a_1=\sqrt{1+2E_0(0)}$ due to (\ref{Phi0}).

By applying Proposition \ref{PropL} with $k=2$ for 
$t\in[0,T_0]$ and $|\xi|\ge \ka\mu_0 M_2$, we have 
\begin{align*}
  \cE(t,\xi)\le C_0\cE(0,\xi)\exp\(T_0 (\ka\mu_0)^2 M_2 |\xi|^{-1}\). 
\end{align*}
If we choose $j_1$ by 
\begin{equation*}
  j_1:=\min\left\{j\ge j_0\:;\: 
  \rho_j\ge \max\{\ka\mu_0 M_2,T_0 (\ka\mu_0)^2M_2/\log 2\}\right\}, 
\end{equation*}
then we have 
\begin{equation}\label{estcET0}
  \cE(t,\xi)\le 2C_0\cE(0,\xi)
\end{equation}
for any  $t\in[0,T_0]$ and $|\xi|\ge \rho_{j_1}$. 
Moreover, by Proposition \ref{PropL} we have 
\begin{equation}\label{estcET1}
  \cE(t,\xi) \le C_0\exp\((T-T_0)\rho_{j_1}^k M_k |\xi|^{-k+1}\)\cE(T_0,\xi) 
\end{equation}
for $t\in(T_0,T_1)$ and $|\xi|\ge \rho_{j_1} M_k/M_{k-1}$. 
Consequently, associating the estimates (\ref{estcET0}) and (\ref{estcET1}) 
we have 
\begin{equation*}
  \cE(t,\xi) \le 2C_0^2\exp\((T-T_0)\rho_{j_1}^k M_k |\xi|^{-k+1}\)\cE(0,\xi)
\end{equation*}
for any 
$t\in [0,T_1)$ and $|\xi|\ge \rho_{j_1} M_k/M_{k-1}$. 
Denoting $I_k:=[M_k/M_{k-1},M_{k+1}/M_k)$ and using (\ref{lemma-af-e2}) we have \begin{align*}
 \int_{|\xi|\ge \rho_{j_1}}
    \tcM\(\frac{|\xi|}{\rho_{j_1}}\)\cE(t,\xi)\,d\xi
&=\sum_{k=1}^\infty
  \int_{\frac{|\xi|}{\rho_{j_1}}\in I_k}
    \tcM\(\frac{|\xi|}{\rho_{j_1}}\)\cE(t,\xi)\,d\xi
\\
&\le
  2C_0^2\sum_{k=1}^\infty \int_{\frac{|\xi|}{\rho_1}\in I_k}
    \tcM\(\frac{|\xi|}{\rho_{j_1}}\)
    \exp\((T-T_0) \rho_{j_1}^k M_k |\xi|^{-k+1}\)
    \cE(0,\xi)\,d\xi
\\
&\le
  2C_0^2\sum_{k=1}^\infty \int_{\frac{|\xi|}{\rho_{j_1}}\in I_k}
    \tcM\(\frac{|\xi|}{\rho_{j_1}}\)
    \exp\(\eta \rho_{j_1}^k M_k |\xi|^{-k+1}\)
    \cE(0,\xi)\,d\xi
\\
 &=
  2 C_0^2\int_{|\xi|\ge \rho_{j_1}}
    \tcM\(\frac{|\xi|}{\rho_{j_1}}\)
    \exp\(\frac{\eta|\xi|}{\cM\(\frac{\rho_{j_1}}{|\xi|}\)}\)
    \cE(0,\xi)\,d\xi
\\
&\le 2C_0^2K_0. 
\end{align*}
Therefore, seeting $\si_1=4C_0^2K_0/E_0(0)$ we have 
\begin{align*}
  \int_{|\xi|\ge \rho_{j_1}}
    \frac{1}{L_l}\(\frac{|\xi|}{\si_1\rho_{j_1}}\)^l \cE(t,\xi) \,d\xi
  &=\frac{1}{\si_1^l}
  \int_{|\xi|\ge \rho_{j_1}}
    \frac{1}{L_l}\(\frac{|\xi|}{\rho_{j_1}}\)^l \cE(t,\xi) \,d\xi
  \\
  &\le \frac{1}{\si_1^l}
  \int_{|\xi|\ge \rho_{j_1}}
    \tcM\(\frac{|\xi|}{\rho_{j_1}}\) \cE(t,\xi) \,d\xi
\\
  &\le \frac{2C_0^2 K_0}{\si_1}
  = \frac{E_0(0)}{2}
\end{align*}
for $t\in [0,T_1)$; 
however, these estimates contradict (\ref{maxT1=E0}). 
Consequently, it must be that 
\begin{equation*}
  \sup_{t\in[0,T)}
  \sup_{l\ge 1} \left\{\int_{|\xi|\ge\rho_{j_1}}
  \frac{1}{L_l}\(\frac{|\xi|}{\si_1 \rho_{j_1}}\)^l
  \cE(t,\xi)\,d\xi\right\}
  \le E_0(0). 
\end{equation*}
It follows that 
\begin{align*}
  \int_{\R^n}|\xi|\cE(t,\xi)\,d\xi
=&\int_{|\xi|\le \rho_{j_1}}|\xi|\cE(t,\xi)\,d\xi
 +\tM_1\si_1\rho_{j_1}
  \int_{|\xi|\ge \rho_{j_1}}\frac{|\xi|}{L_1\si_1\rho_{j_1}}\cE(t,\xi)\,d\xi
\\
\le&
  \rho_{j_1} E_0(0)
 +L_1\si_1\rho_{j_1} \sup_{l\ge 1}
  \left\{
    \int_{|\xi|\ge \rho_{j_1}}\frac{1}{L_l}\(\frac{|\xi|}{\si_1\rho_{j_1}}\)^l
    \cE(t,\xi)\,d\xi
  \right\}
\\
\le&
  \rho_{j_1} E_0(0)
 +L_1\si_1\rho_{j_1} E_0(0)
\end{align*}
for any $[0,T)$. 
However, these estimates bring a contradiction if $T<\infty$; 
thus it must be that $T=\infty$.

\subsection{Concluding remarks}
\begin{itemize}
\item 
The nonlinear coefficient $1+\|\nabla u(t,\cd)\|^2$ in the equation of (\ref{K}) can be generalized $\Psi(\|\nabla u(t,\cd)\|^2)$ for $\Psi\in C^\infty([0,\infty))$ satisfying $\Psi(y)\ge 1$ with the assumption $(u_0,u_1)\in \cB_\Delta(\{\widetilde{L}_k\},\{L_k\})$, where $\widetilde{L}_k$ will be chosen corresponding to the regularity of $\Psi$. 
For example, if $|\Psi^{(k)}(y)|\le \mu^k k!^{s}$ with $s>1$, then $\widetilde{L}_k=k!^{s+1}L_k$. 
\item 
We may expect that the assumption of Theorem \ref{MThm} to the initial data is improved to $\cB_\Delta(\{L_k\},\{L_k\})$. 
However, the right hand side of (\ref{LemmPhik-e2}) must be estimated by $K^k$ for a positive constant $K$ in order to realize the expectation, 
and such estimates are not proved at present. 
\end{itemize}

\section{Appendix}
\subsection{Proof of Lemma \ref{lc-prop}}
\begin{proof}
(i) is trivial by (\ref{lc}) as follows: 
\begin{align*}
  \frac{M_l}{M_{l-1}}
  \le \frac{l M_{l+1}}{(l+1)M_{l}}
  \le \frac{l M_{l+2}}{(l+2)M_{l}}
  \le \cdots \le \frac{lM_k}{kM_{k-1}}. 
\end{align*}
(ii) is also trivial by (i) as follows: 
\begin{align*}
  M_{k}M_{l}
 =M_{k+l}
  \frac{\frac{M_l}{M_{l-1}}\cdots\frac{M_1}{M_0}}
       {\frac{M_{k+l}}{M_{k+l-1}}\cdots \frac{M_{k+1}}{M_k}}
 \le M_{k+l}. 
\end{align*}
By (i) and Stirling's formula, there exists a constant $\de\in(0,1)$ 
such that 
\begin{align*}
  M_k=\frac{M_k}{M_0}
  =\frac{M_k}{M_{k-1}}\frac{M_{k-1}}{M_{k-2}}\cdots\frac{M_1}{M_0}
  \le
  \frac{k!}{k^k}\(\frac{M_k}{M_{k-1}}\)^k
  \le \de^k \(\frac{M_k}{M_{k-1}}\)^k. 
\end{align*}
Thus (iii) is proved. 
By (i) we have (iv) as follows: 
\begin{align*}
  \frac{M_k}{M_{k-1}}-\frac{M_l}{M_{l-1}}
  \ge
  \frac{kM_{l}}{lM_{l-1}}-\frac{M_l}{M_{l-1}}
  \ge \(\frac{k}{l}-1\)\frac{lM_1}{M_{0}}
  =k-l.
\end{align*}
By (i), (v) is trivial as follows: 
\begin{align*}
  \frac{k!M_k}{k (k-1)!M_{k-1}}
 =\frac{M_k}{M_{k-1}}
<  \frac{M_{k+1}}{M_{k}}
 =\frac{(k+1)!M_{k+1}}{(k+1)k!M_{k}}. 
\end{align*}
\end{proof}

\subsection{Proof of Lemma \ref{lc-binomial}}
\begin{proof}
By (\ref{lc}) we have 
\begin{align*}
  \binom{k}{j}\frac{M_{r+j} M_{q+k-j}}{M_{q+r+k}}
=&\binom{k}{j}\prod_{l=0}^{q+k-j-1}\frac{1}{r+j+l+1}
  \frac{(r+j+l+1)M_{r+j+l}}{M_{r+j+l+1}}
  \frac{M_{l+1}}{M_{l}}
\\
\le&\binom{k}{j}\prod_{l=0}^{q+k-j-1}\frac{1}{r+j+l+1}
  \frac{(l+1)M_{l}}{M_{l+1}}\frac{M_{l+1}}{M_{l}}
\\
\le&\binom{k}{j}\prod_{l=0}^{k-j-1}\frac{l+1}{j+l+1}
=1.
\end{align*}
We can suppose that $r\ge 1$; otherwise (\ref{lc-binomial-e2}) coincides with (\ref{lc-binomial-e1}). 
Let us define 
\begin{align*}
  L_j(k,r):=\frac{(k-j+r)!(j+r)!}{(k-j)!j!}
\end{align*}
and
\begin{align*}
  N_j(k,r):
 =\frac{\binom{k}{j}\prod_{l=1}^{j} \frac{r+l}{r+k-j+l}}{\binom{k+r-1}{r}}
 =\frac{(k-1)!k!}{(k+r-1)!(k+r)!}L_j(k,r).
\end{align*}
Noting that $L_{j-1}(k,r)\le L_{j}(k,r)$ if and only if $2j-1\le k$, 
we have
\begin{align*}
  \max_{0\le j \le k}\{L_j(k,r)\}=L_{\left[\frac{k+1}{2}\right]}(k,r).
\end{align*}
Consequently, if $k$ is odd then 
\begin{align*}
  \max_{0\le j \le k}\{N_j(k,r)\}
=\frac{(\frac{k-1}{2}+r)\cdots(\frac{k-1}{2}+1)}{(k+r-1)\cdots k}
 \frac{(\frac{k+1}{2}+r)\cdots(\frac{k+1}{2}+1)}{(k+r)\cdots(k+1)}
  \le 1,
\end{align*}
and if $k$ is even then
\begin{align*}
  \max_{0\le j \le k}\{N_j(k,r)\}
=&\frac{(\frac{k}{2}+r)\cdots(\frac{k}{2}+1)}{(k+r-1)\cdots k}
  \frac{(\frac{k}{2}+r)\cdots(\frac{k}{2}+1)}{(k+r)\cdots(k+1)}
  \le 1.
\end{align*}
Therefore, by (\ref{lc}) we have 
\begin{align*}
  \binom{k}{j}\frac{M_{r+j} M_{r+k-j}}{M_{r+k}}
=M_r \binom{k+r-1}{r} N_j(k,r)
  \frac{\frac{M_{r+l}}{(r+l)M_{r+l-1}}}
       {\frac{M_{r+k-j+l}}{(r+k-j+l)M_{r+k-j+l-1}}}
\le
  M_r \binom{k+r-1}{r}
\end{align*}
for any $j\le k$. 
\end{proof}

\subsection{Proof of Lemma \ref{lemma-af}}
\begin{proof}
By Lemma \ref{lc-prop} (i) with $l=k-1$, $\{M_l/M_{l-1}\}_{l=1}^\infty$ is strictly increasing. Moreover, by Lemma \ref{lc-prop} (iv) with $l=1$ we have $\lim_{k\to\infty}M_k/M_{k-1}=\infty$. 
Therefore, for any $r\ge 1$ there exists $l\in \N$ such that 
$M_l/M_{l-1}\le r <M_{l+1}/M_l$. 
Then we have
\begin{align*}
  \frac{r^l}{M_l}
    \ge
    \frac{r^k}{M_k} \frac{M_k}{M_l}\(\frac{M_{l}}{M_{l-1}}\)^{l-k}
    \ge
    \frac{r^k}{M_k} \frac{M_k}{M_l}
    \frac{M_{l}}{M_{l-1}}\cdots \frac{M_{k+1}}{M_{k}}
   =\frac{r^k}{M_k} 
\end{align*}
for any $1\le k<l$, and 
\begin{align*}
  \frac{r^l}{M_l}
    \ge
    \frac{r^k}{M_k} \frac{\frac{M_k}{M_l}}{\(\frac{M_{l+1}}{M_l}\)^{k-l}}
    \ge
    \frac{r^k}{M_k} 
    \frac{\frac{M_k}{M_l}}
         {\frac{M_{l+1}}{M_l}\cdots\frac{M_{k}}{M_{k-1}}}
   =\frac{r^k}{M_k} 
\end{align*}
for any $k>l$; thus (\ref{lemma-af-e2}) is proved. 
By the same way, for $0\le r<M_1/M_0$ and $k>1$ we have $r/M_1 \ge r^k/M_k$. 
Moreover, by Lemma \ref{lc-prop} (iii) we have 
\begin{align*}
  \mM\(r;\{M_k\}\)
 \ge \mM\(\frac{M_l}{M_{l-1}};\{M_k\}\)
 =\frac{1}{M_l}\(\frac{M_{l}}{M_{l-1}}\)^l
 \ge \de^{-l}
 \to\infty
 \;\; (l\to\infty). 
\end{align*}
\end{proof}
\subsection{Proof of Lemma \ref{lemma-af1}}
\begin{proof}
We denote $\mM\(r;\{M_k\}\)=\mM(r)$. 
Suppose that $r\in [M_l/M_{l-1},M_{l+1}/M_l)$ and 
$r+d\in [M_m/M_{m-1},M_{m+1}/M_m)$ for $l\le m$. 
By Lemma \ref{lc-prop} (i) and Lemma \ref{lemma-af} we have
\begin{align*}
  \frac{\mM(r+d)}{\mM(r)}
 =&\frac{M_l}{M_m}\(1+\frac{d}{r}\)^l (r+d)^{m-l}
  \le e^d \frac{M_l}{M_m} \(\frac{M_{l+1}}{M_l}+d\)^{m-l}
\\
  \le&2^{m-l-1}e^d  \frac{M_l}{M_m} 
  \(\(\frac{M_{l+1}}{M_l}\)^{m-l}+d^{m-l}\). 
\end{align*}
If $m=l$, then we have $\mM(r+d)/\mM(r)\le e^d$. 
On the other hand, if $m>l$, then noting the inequalities
\begin{align*}
  \frac{M_l}{M_m} \(\frac{M_{l+1}}{M_l}\)^{m-l}
 =\frac{M_{m-1}}{M_m}\cdots\frac{M_l}{M_{l+1}} \(\frac{M_{l+1}}{M_l}\)^{m-l}
  \le 1,
\end{align*}
and
\begin{align*}
  d\ge \frac{M_{m}}{M_{m-1}}-\frac{M_{l+1}}{M_{l}}\ge m-l-1 
\end{align*}
by Lemma \ref{lc-prop} (iv), we have 
\begin{align*}
  \frac{\mM(r+d)}{\mM(r)}
  \le 
  (2e)^{d} \(1 +d^{d+1}\). 
\end{align*}

\end{proof}
\subsection{Proof of Lemma \ref{lemma-af2}}
\begin{proof}
If $M_k/M_{k-1}<N_k/N_{k-1}$, then for any $k\in \N$ we have 
\begin{align*}
  \frac{N_k}{M_k}
 =\frac{\frac{N_k}{N_{k-1}}\cdots\frac{N_1}{N_0}}
       {\frac{M_k}{M_{k-1}}\cdots\frac{M_1}{M_0}}
  >1. 
\end{align*}
Let $r\in[M_m/M_{m-1},M_{m+1}/M_{m})\cap[N_l/N_{l-1},N_{l+1}/N_l)$ 
with $l\le m$. 
Then by Lemma \ref{lemma-af} we have
\begin{align*}
  \frac{\mM(r;\{M_k\})}{\mM(r;\{N_k\})}
=\frac{N_l}{M_m}r^{m-l}
\ge \frac{N_l}{M_m}\(\frac{M_m}{M_{m-1}}\)^{m-l}
=\frac{\(\frac{M_m}{M_{m-1}}\)^{m-l}}
       {\frac{M_{m}}{M_{m-1}}\cdots\frac{M_{l+1}}{M_{l}}}
  \frac{N_l}{M_l}
\ge\frac{N_l}{M_l}.  
\end{align*}
Analogously, we have 
\begin{align*}
   \frac{\mM(r;\{M_k\})}{r\:\mM(r;\{N_k\})}
  =\frac{N_l}{M_{l+1}}\frac{r^{m-l-1}}{\frac{M_m}{M_{l+1}}}
\ge \frac{N_l}{M_{l+1}}
    \frac{\(\frac{M_m}{M_{m-1}}\)^{m-l-1}}
         {\frac{M_m}{M_{m-1}}\cdots\frac{M_{l+2}}{M_{l+1}}}
\ge \frac{N_l}{M_{l+1}}
\end{align*}
for $m\ge l+1$, and 
\begin{align*}
   \frac{\mM(r;\{M_k\})}{r\:\mM(r;\{N_k\})}
\ge\frac{N_l}{M_{l+1}}\frac{r^{-1}}{\frac{M_l}{M_{l+1}}}
\ge\frac{N_l}{M_{l+1}}
\end{align*}
for $l=m$. 
Thus the proof of (\ref{lemma-af2-e1}), (\ref{lemma-af2-e2}) and (\ref{lemma-af2-e3}) are concluded. 
\end{proof}

\subsection{Proof of Proposition \ref{Prop-cB}}
(i) and (ii) are trivial by the definition and Lemma \ref{lemma-af2}. 
We denote 
\begin{equation*}
  \cM(r)=\mM(r;\{M_k\}), 
  \;\;
  \tcM(r)=\mM(r;\{\tM_k\}), 
  \;\;
  \cN(r)=\mM(r;\{N_k\}), 
  \;\;
  \tcN(r)=\mM(r;\{\tN_k\}). 
\end{equation*}
Let us define the sequences of positive real numbers $\{r_j\}_{j=1}^\infty$ and $\{\rho_j\}_{j=1}^\infty$ by 
\begin{equation*}
  r_1=\rho_1=1,\;\;
  \rho_{j+1}=r_j+1,\;\;
  r_{j+1}=\cM\(\frac{r_{j+1}}{\rho_{j+1}}\)
\end{equation*}
for $j\ge 1$. 
Then we have the following lemmas: 
\begin{Lemm}\label{Prop-cB-lemm1}
$\{\rho_j\}$, $\{r_j\}$ and $\{r_j/\rho_j\}$ are strictly increasing 
and go to infinity as $j\to\infty$. 
\end{Lemm}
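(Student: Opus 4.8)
The plan is to unwind the recursive definitions and show each sequence is strictly increasing and unbounded by an elementary induction on $j$, using only the basic properties of the associated function $\cM(r)=\mM(r;\{M_k\})$ established in Lemma~\ref{lemma-af} (strict monotonicity and $\cM(r)\to\infty$ as $r\to\infty$) together with the fact that $\{M_k\}$ is log convex. First I would record the base case: $r_1=\rho_1=1$, so $r_1/\rho_1=1$, and $\rho_2=r_1+1=2>1=\rho_1$, while $r_2=\cM(r_2/\rho_2)$. Here one must first argue that the defining equation $r_{j+1}=\cM(r_{j+1}/\rho_{j+1})$ actually has a (unique) solution $r_{j+1}>\rho_{j+1}$: since $\rho_{j+1}$ is already determined before $r_{j+1}$, consider the function $g(r):=\cM(r/\rho_{j+1})-r$ on $[\rho_{j+1},\infty)$; because $\cM$ grows faster than any polynomial (Remark after Lemma~\ref{lemma-af2}, or directly from $\lim M_k/M_{k-1}=\infty$), $g(r)\to\infty$ as $r\to\infty$, and for $r$ slightly above $\rho_{j+1}$ one has $\cM(r/\rho_{j+1})$ close to $\cM(1)$, which is small, so $g$ is negative there; by continuity $g$ has a zero, and that zero exceeds $\rho_{j+1}$. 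This simultaneously gives $r_{j+1}>\rho_{j+1}$, i.e. $r_{j+1}/\rho_{j+1}>1$.

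Next I would run the induction step. Assume $\rho_j<\rho_{j+1}$, $r_j<r_{j+1}$, and $r_j/\rho_j<r_{j+1}/\rho_{j+1}$ (with $r_j/\rho_j\ge 1$). From $\rho_{j+1}=r_j+1$ and $\rho_{j+2}=r_{j+1}+1$ and the inductive hypothesis $r_j<r_{j+1}$ we get $\rho_{j+1}<\rho_{j+2}$ immediately. For the ratio: since $r_{j+1}/\rho_{j+1}>1$ by the previous paragraph and $r_{j+2}/\rho_{j+2}>1$ likewise, and since the map $\rho\mapsto \cM(r/\rho)$ is decreasing in $\rho$ while $r\mapsto\cM(r/\rho)$ is increasing in $r$, I would compare the fixed-point equations for $r_{j+1}$ and $r_{j+2}$: writing $s_k:=r_k/\rho_k$, the equation $r_{k}=\cM(s_k)$ combined with $\rho_k=r_{k-1}+1$ yields $s_k=\cM(s_k)/(r_{k-1}+1)$. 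As $k$ increases by one, $r_{k-1}$ increases (by the induction hypothesis), so the implicit relation forces $s_k$ to change monotonically; the cleanest route is to show directly that $r_{j+1}<r_{j+2}$ first — this follows because $r_{j+2}=\cM(r_{j+2}/\rho_{j+2})$ and if we had $r_{j+2}\le r_{j+1}$ then, using $\rho_{j+2}>\rho_{j+1}$ and monotonicity of $\cM$, we would get $r_{j+2}=\cM(r_{j+2}/\rho_{j+2})\le \cM(r_{j+1}/\rho_{j+1})=r_{j+1}$, which is consistent, so instead I would exploit that $\cM$ is superlinear to pin down $s_{j+2}>s_{j+1}$ and hence $r_{j+2}=\cM(s_{j+2})>\cM(s_{j+1})=r_{j+1}$.

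The main obstacle is exactly this monotonicity of the ratio $r_j/\rho_j$: the two competing effects (larger $\rho_{j+1}$ pushes $r_{j+1}/\rho_{j+1}$ down, but the fixed-point self-reinforcement of the superlinear $\cM$ pushes it up) must be disentangled. I expect the right bookkeeping is to prove by simultaneous induction the package ``$s_j<s_{j+1}$ and $r_j<r_{j+1}$'', deriving each from the other via the identity $r_{j+1}=\cM(s_{j+1})$ and $s_{j+1}=r_{j+1}/(r_j+1)$: strict superlinearity of $\cM$ (for instance, $\cM(r)/r\to\infty$, which follows from $\lim_k M_k/M_{k-1}=\infty$ and representation \eqref{lemma-af-e2}) guarantees that increasing $r_j$ by the inductive step forces $s_{j+1}$ strictly up once $s$ has entered the region where $\cM(s)/s$ dominates the linear factor. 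Finally, unboundedness is easy once monotonicity is in hand: $\{\rho_j\}$ is increasing; if it were bounded, then $\{r_j=\rho_{j+1}-1\}$ would be bounded, hence $\{s_j\}$ bounded, hence $r_{j+1}=\cM(s_{j+1})$ bounded, but then $\rho_{j+1}=r_j+1$ converges and $s_j=r_j/\rho_j$ converges to some $s_\infty>1$ satisfying $s_\infty=\cM(s_\infty)/(r_\infty+1)$ with $r_\infty=\cM(s_\infty)$, forcing $r_\infty+1=r_\infty$, a contradiction; therefore $\rho_j\to\infty$, and then $r_j=\rho_{j+1}-1\to\infty$ and $s_j\to\infty$ as well (the last because $s_j>1$ is increasing and if it had a finite limit the same fixed-point identity would again collapse).
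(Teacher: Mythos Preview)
Your proposal has the right inductive skeleton, but the decisive step---showing $r_{j+1}>r_j$ (equivalently that $s_j:=r_j/\rho_j$ is strictly increasing)---is not actually carried out. You flag it yourself as ``the main obstacle,'' your attempted contradiction (assuming $r_{j+2}\le r_{j+1}$) is, as you admit, ``consistent'' and yields nothing, and what remains is only the assertion that you ``expect'' superlinearity of $\cM$ to force $s_{j+1}>s_j$. That is precisely the point that needs proof.

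The paper closes this gap by exploiting the explicit piecewise representation \eqref{lemma-af-e2}. One first rules out $r_{j+1}/\rho_{j+1}<M_2/M_1$: on that range $\cM(s)=s$, so the fixed-point equation would give $r_{j+1}=r_{j+1}/\rho_{j+1}$, impossible since $\rho_{j+1}=r_j+1\ge 2$. Hence $r_{j+1}/\rho_{j+1}\in[M_k/M_{k-1},M_{k+1}/M_k)$ for some $k\ge 2$, where $\cM(s)=s^k/M_k$, and the fixed-point equation can be \emph{solved explicitly}:
\[
  r_{j+1}=M_k^{1/(k-1)}(r_j+1)^{k/(k-1)}>r_j+1.
\]
This single computation gives strict increase of $\{r_j\}$ (by more than $1$ at each step, so $r_j\to\infty$), hence of $\{\rho_j\}$; and since $\cM(s_j)=r_j$ with $\cM$ strictly increasing, $\{s_j\}$ is strictly increasing to infinity as well. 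Your implicit-function route can in fact be completed if you first prove that $h(s):=\cM(s)/s$ is strictly increasing on $[M_2/M_1,\infty)$ (immediate from \eqref{lemma-af-e2}, since $h(s)=s^{k-1}/M_k$ on each piece) and then observe $h(s_{j+1})=r_j+1$; but as written, that lemma is missing from your argument.
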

\begin{proof}
Let $j\ge 1$. 
If $r_{j+1}/\rho_{j+1}< M_2/M_1$, then we have 
\begin{align*}
  r_{j+1}=\cM\(\frac{r_{j+1}}{\rho_{j+1}}\)
  =\frac{1}{M_1}\frac{r_{j+1}}{\rho_{j+1}}=\frac{r_{j+1}}{r_{j}+1},
\end{align*}
and thus $r_{j+1}=0$. 
Therefore, it must be that $r_{j+1}/\rho_{j+1}\ge M_2/M_1$ for any $j\ge 1$. 
Let $r_{j+1}/\rho_{j+1}\in [M_k/M_{k-1},M_{k+1}/M_{k})$ for $k\ge 2$. 
Then by Lemma \ref{lemma-af} we have 
\begin{align*}
  r_{j+1}=\cM\(\frac{r_{j+1}}{\rho_{j+1}}\)
   =\frac{1}{M_k}\(\frac{r_{j+1}}{\rho_{j+1}}\)^k,
\end{align*}
it follows that
\begin{align*}
  r_{j+1}=M_k^{\frac{1}{k-1}}(r_j+1)^{\frac{k}{k-1}}
  >(r_j+1)^{\frac{k}{k-1}}>r_j+1.
\end{align*}
Therefore, $\{r_j\}$, $\{\rho_j\}$ and $\{\cM(r_j/\rho_j)\}$ are strictly increasing and all the sequences go to infinity as $j\to\infty$. 
Moreover, by Lemma \ref{lemma-af} we see that $\{r_j/\rho_j\}$ is strictly increasing and $\lim_{j\to\infty}r_j/\rho_j=\infty$. 
\end{proof}
%
\begin{Lemm}\label{Prop-cB-lemm2}
For any $j\ge 2$ the following inequality is valid$:$ 
\begin{equation}\label{Prop-cB-lemm2-e1}
  \frac{r_j}{\rho_j-1} \le \frac{r_j}{\rho_j} + 2M_2. 
\end{equation}
\end{Lemm}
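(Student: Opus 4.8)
The plan is to recast (\ref{Prop-cB-lemm2-e1}) as a one–sided bound on the difference $r_j/(\rho_j-1)-r_j/\rho_j$. Since $j\ge 2$ gives $\rho_j=r_{j-1}+1\ge r_1+1=2$ by Lemma \ref{Prop-cB-lemm1}, the denominator satisfies $\rho_j-1\ge\rho_j/2>0$, hence
\begin{equation*}
  \frac{r_j}{\rho_j-1}-\frac{r_j}{\rho_j}
  =\frac{r_j}{(\rho_j-1)\rho_j}
  \le\frac{2r_j}{\rho_j^{2}},
\end{equation*}
so it suffices to prove the cleaner inequality $r_j\le M_2\rho_j^{2}$.

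To this end I would reuse the explicit description of $r_j$ obtained inside the proof of Lemma \ref{Prop-cB-lemm1}: there is an integer $k\ge 2$ with $r_j/\rho_j\in[M_k/M_{k-1},M_{k+1}/M_k)$, and then, by the representation (\ref{lemma-af-e2}) of the associated function, $r_j=\cM(r_j/\rho_j)=(r_j/\rho_j)^{k}/M_k$, whence also $\rho_j=r_j/(r_j/\rho_j)=(r_j/\rho_j)^{k-1}/M_k$. Substituting both identities, the desired bound $r_j\le M_2\rho_j^{2}$ is equivalent to $(r_j/\rho_j)^{k-2}\ge M_k/M_2$. Since $r_j/\rho_j\ge M_k/M_{k-1}$ and $k-2\ge 0$, it is enough to verify the purely sequential inequality $(M_k/M_{k-1})^{k-2}\ge M_k/M_2$, i.e. $M_2\,M_k^{\,k-2}\ge M_{k-1}^{\,k-2}M_k$; this is trivial (an equality) when $k=2$ or $k=3$.

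For $k\ge 4$ the inequality $M_{k-1}^{\,k-2}\le M_2\,M_k^{\,k-3}$ follows from log convexity alone. Writing $M_m=\prod_{i=1}^{m}a_i$ with $a_i:=M_i/M_{i-1}$, one has $a_1=1$, $M_2=a_2$, and $(a_i)$ is non-decreasing by (\ref{lc}); taking logarithms reduces the claim, after cancellation, to $\sum_{i=3}^{k-1}\log a_i\le(k-3)\log a_k$, which holds because the left-hand sum has exactly $k-3$ terms, each at most $\log a_k$. Assembling the three reductions yields $r_j/(\rho_j-1)\le r_j/\rho_j+2M_2$.

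There is no genuine obstacle here; the only point demanding a little care is the bookkeeping in the last two paragraphs — invoking the correct representation of $r_j$ coming from Lemma \ref{Prop-cB-lemm1}, and separating the degenerate cases $k\in\{2,3\}$, where the intermediate inequalities become equalities, from the generic case $k\ge 4$.
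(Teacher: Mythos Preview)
Your argument is correct. The opening reduction coincides with the paper's: from $\rho_j\ge 2$ one gets
\[
  \frac{r_j}{\rho_j-1}-\frac{r_j}{\rho_j}\le \frac{2r_j}{\rho_j^2},
\]
so both proofs reduce to $r_j/\rho_j^2\le M_2$. The difference is in how this last inequality is established. You locate the integer $k\ge 2$ with $r_j/\rho_j\in[M_k/M_{k-1},M_{k+1}/M_k)$, use the explicit formula $r_j=(r_j/\rho_j)^k/M_k$, and then verify a purely combinatorial inequality for $\{M_k\}$ by a case split and a log-convexity telescoping. The paper bypasses all of this with a one-line observation coming straight from the definition of the associated function as a supremum: since $\cM(s)=\sup_{l\ge 1}s^l/M_l\ge s^2/M_2$ for every $s$, and since $r_j=\cM(r_j/\rho_j)$ by construction, one has
\[
  \frac{r_j}{\rho_j^2}
  =\frac{(r_j/\rho_j)^2}{r_j}
  =M_2\,\frac{(r_j/\rho_j)^2/M_2}{\cM(r_j/\rho_j)}
  \le M_2.
\]
Your route works, but the paper's shortcut is worth noting: it never needs to identify the index $k$ at which the supremum is attained, nor any further log-convexity beyond what is already encoded in $\cM$.
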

\begin{proof}
Noting $2\le r_1 +1 =\rho_2 \le \rho_j$ for any $j\ge 2$, we have 
\begin{align*}
  \frac{1}{\rho_{j}-1}\le \frac{1}{\rho_j}+\frac{2}{\rho_j^2}. 
\end{align*}
Therefore, by Lemma \ref{lemma-af} we have 
\begin{equation*}
  \frac{r_j}{\rho_j-1}
 \le \frac{r_j}{\rho_j}+\frac{2r_j}{\rho_j^2}
 =\frac{r_j}{\rho_j} + 2M_2 \frac{\frac{1}{M_2}{\(\frac{r_j}{\rho_j}\)^2}}{r_j}
 \le \frac{r_j}{\rho_j} + 2M_2 \frac{\cM\(\frac{r_j}{\rho_j}\)}{r_j}
 =\frac{r_j}{\rho_j} + 2M_2. 
\end{equation*}
\end{proof}
We define $\hat{u}_{1,j}=\hat{u}_{1,j}(\xi)$ for $j=1,2,\cdots$ by
\begin{equation*}
  \hat{u}_{1,j}(\xi):=
  \begin{cases}
    \(\om_n r_j^{n-2} \tcM\(\frac{|\xi|}{\rho_j}\)\)^{-\frac12}
      & |\xi|\in[r_j,r_{j}+1),\\
    0 & |\xi|\not\in[r_j,r_j+1),
  \end{cases}
\end{equation*}
where $\om_n=\pi^{n-1}\sum_{l=0}^{n-2}\binom{n-1}{l}$. 
Then we have 
\begin{align*}
G(\rho_j,\eta,\{M_k\},\{\tM_k\};0,u_{1,j})
=&\frac12\int_{|\xi|\ge \rho_j}\tcM\(\frac{|\xi|}{\rho_j}\)
    \exp\(\frac{\eta|\xi|}{\cM\(\frac{|\xi|}{\rho_j}\)}\)
    |\hat{u}_{1,j}(\xi)|^2\,d\xi
\\
 &=\frac{1}{2\om_n r_j^{n-2}}
  \int_{r_j\le |\xi|\le r_j+1}
    \exp\(\frac{\eta|\xi|}{\cM\(\frac{|\xi|}{\rho_j}\)}\)
    \,d\xi
\\
 &\le\frac{1}{2\om_n r_j^{n-2}}
  \exp\(\frac{\eta r_j}{\cM\(\frac{r_j}{\rho_j}\)}\)
  \int_{r_j\le |\xi|\le r_j+1}\,d\xi
\\
 &\le\frac{\pi^{n-1}e^\eta}{\om_n r_j^{n-2}}\((r_j+1)^{n-1}-r_j^{n-1}\)
 =\frac{\pi^{n-1}e^\eta}{\om_n r_j^{n-2}}
  \sum_{l=0}^{n-2}\binom{n-1}{l}r_j^l
\\
  &\le
  \frac{\pi^{n-1}e^\eta}{\om_n}\sum_{l=0}^{n-2}\binom{n-1}{l}
 =e^\eta. 
\end{align*}
Thus the sequence of initial data $\{(0,u_{1,j})\}_{j=1}^\infty$ is bounded in $\cB_\Delta(\{M_k\},\{\tM_k\})$. 

\noindent (iii): 
For any $\rho>0$, there exists $j\in \N$ such that $\rho_j\ge \rho$. 
Then for large $j$ we have 
\begin{align*}
  F_j:=&
  \int_{\R^n}\mM\(\frac{|\xi|}{\rho};\{L_k\}\)|\hat{u}_{1,j}(\xi)|^2\,d\xi
\ge
  \frac{1}{\om_n r_j^{n-2}}
  \int_{r_j\le|\xi|\le r_j+1}
  \frac{\mM\(\frac{|\xi|}{\rho_j};  \{L_k\}\)}
       {\mM\(\frac{|\xi|}{\rho_j};\{\tM_k\}\)}
  \,d\xi
\\
\ge&
  \frac{2\pi^{n-1}(n-1)}{C\om_n}
  \frac{\mM\(\frac{r_j}{\rho_j};\{L_k\}\)}
       {\mM\(\frac{r_j}{\rho_j};\{\tM_k\}\)}. 
\end{align*}
Therefore, by Lemma \ref{lemma-af1}, \ref{lemma-af2} and \ref{Prop-cB-lemm1} we have $\lim_{j\to\infty} F_j=\infty$ ($j\to\infty$), it follows that 
$\{u_{1,j}\}$ is not bounded in $\cH(\{L_k\})$; 
thus (iii) is proved. 

\noindent
(iv) Let us prove that 
$\sup_{l}\{G(\si_l,\eta,\{N_k\},\{\tN_k\};0,u_{1,j})\}$ is not bounded as $j\to\infty$ for any $\{\si_l\}\in {\cal L}$ and $\eta>0$. 
We note that $M_k/M_{k-1}<N_k/N_{k-1}$ is valid if $N_k/M_{k+1}\nearrow\infty$ as $k\to\infty$. 
By Lemma \ref{Prop-cB-lemm1}, for any large $l\in \N$ there exists $j\ge 2$ such that $\si_l\in[r_{j-1},r_j)$. 
Then for any $r_j\le |\xi|\le r_j+1$ we have 
\begin{align*}
  \tcN\(\frac{|\xi|}{\si_l}\)
  \ge
  \tcN\(\frac{r_j}{\si_l}\)
  \ge
  1 
\end{align*}
and 
\begin{align*}
  \tcM\(\frac{|\xi|}{\rho_j}\)
  \le
  \tcM\(\frac{r_j+1}{\rho_j}\)
  \le
  \tcM\(\frac{r_j}{\rho_j}+1\)
  \le C \tcM\(\frac{r_j}{\rho_j}\)
\end{align*}
for a constant $C>1$ by Lemma \ref{lemma-af1}. 
Moreover, if $\si_l\in[r_{j-1},r_{j-1}+1)=[\rho_j-1,\rho_j)$, 
then by Lemma \ref{lemma-af1}, (\ref{Prop-cB-lemm2-e1}) 
and noting that $r/\cN(r)$ is monotone decreasing, 
there exists a positive constant $\ve$ such that 
\begin{align*}
  \frac{|\xi|}{\tcN\(\frac{|\xi|}{\si_l}\)}
  \ge& \frac{r_j+1}{\cN\(\frac{r_j+1}{\si_l}\)}
  \ge \frac{r_j}{\cN\(\frac{r_j+1}{\si_l}\)}
  \ge \frac{r_j}{\cN\(\frac{r_j+1}{\rho_j-1}\)}
  \ge \frac{r_j}{\cN\(\frac{r_j}{\rho_j-1}+\frac{1}{\rho_2-1}\)}
\\
  \ge& \frac{r_j}{\cN\(\frac{r_j}{\rho_j}+2M_2+1\)}
  \ge \ve\frac{r_j}{\cN\(\frac{r_j}{\rho_j}\)} 
  = \ve\frac{\cM\(\frac{r_j}{\rho_j}\)}{\cN\(\frac{r_j}{\rho_j}\)}. 
\end{align*}
On the other hand, if $\si_l\in[r_{j-1}+1,r_j)=[\rho_j,\rho_{j+1}-1)$, then for $r_j\le |\xi| \le r_j+1$ we have 
\begin{align*}
  \frac{|\xi|}{\cN\(\frac{|\xi|}{\si_l}\)}
  \ge \frac{|\xi|}{\cN\(\frac{|\xi|}{\rho_j}\)}
  \ge \frac{r_j+1}{\cN\(\frac{r_j+1}{\rho_j}\)}
  \ge \frac{r_j}{\cN\(\frac{r_j}{\rho_j}+\frac{1}{2}\)}
  \ge \ve\frac{\cM\(\frac{r_j}{\rho_j}\)}{\cN\(\frac{r_j}{\rho_j}\)}. 
\end{align*}
Summarizing the estimates above and noting Remark \ref{rem-af-k!}, there exists a positive constant $\tilde{\eta}$ such that 
\begin{align*}
  G(\si_l,\eta,\{N_k\},\{\tN_k\};0,u_{1,j})
  =&\frac12\int_{|\xi|\ge \si_l}\tcN\(\frac{|\xi|}{\si_l}\)
    \exp\(\frac{\eta|\xi|}{\cN\(\frac{|\xi|}{\si_l}\)}\)
   |\hat{u}_{1,j}(\xi)|^2\,d\xi
\\
  =&\frac{1}{2\om_n r_j^{n-2}}\int_{r_j\le |\xi|\le r_j+1}
   \frac{\tcN\(\frac{|\xi|}{\si_l}\)}{\tcM\(\frac{|\xi|}{\rho_j}\)}
   \exp\(\frac{\eta|\xi|}{\cN\(\frac{|\xi|}{\si_l}\)}\)
   \,d\xi
\\
  \ge&
  \frac{1}{2C\om_n r_j^{n-2}}
   \frac{1}{\tcM\(\frac{r_j}{\rho_j}\)}
   \exp\(\ve\eta\frac{\cM\(\frac{r_j}{\rho_j}\)}
                     {\cN\(\frac{r_j}{\rho_j}\)}\)
  \int_{r_j\le |\xi|\le r_j+1}
   \,d\xi
\\
  \ge&
  \frac{\pi^{n-1}}{C\om_n r_j^{n-2}}
   \exp\(\ve\eta\frac{\cM\(\frac{r_j}{\rho_j}\)}
                     {\cN\(\frac{r_j}{\rho_j}\)}-\tilde{\eta}\frac{r_j}{\rho_j}\)
   \sum_{k=0}^{n-2}\binom{n-1}{k}r_j^k
\\
  \ge&
  \frac{\pi^{n-1}(n-1)}{C\om_n}
   \exp\(\ve\eta\frac{\cM\(\frac{r_j}{\rho_j}\)}
                     {\cN\(\frac{r_j}{\rho_j}\)}-\tilde{\eta}\frac{r_j}{\rho_j}\).
\end{align*}
Then Lemma \ref{lemma-af2} and \ref{Prop-cB-lemm1} imply that 
$\lim_{j\to\infty}\sup_l\{G(\si_l,\eta,\{N_k\},\{\tN_k\};0,u_{1,j})\}=\infty$. 
Consequently, the sequence of initial data $\{(0,u_{1,j})\}$ is not bounded in $\cB_\Delta(\{N_k\},\{\tN_k\})$.

\subsection{Proof of the inequality (\ref{sum_k^-2})}
We suppose that $r_1\le r_2$ without loss of generality. 
Noting the inequalities 
$(k+r_2-r_1-1)/2 \le j_0:=[(k+r_2-r_1)/2] \le (k+r_2-r_1)/2$, 
we have 
\begin{align*}
  &\sum_{j=0}^{j_0} \(\frac{r_1+r_2+k+j+1}{(r_1+j+1)(r_2+k-j+1)}\)^2
  \\
  &\le
    \(\frac{r_1+r_2+k+j_0+1}{r_2+k-j_0+1}\)^2
    \sum_{j=0}^{j_0} \frac{1}{(r_1+j+1)^2}
   \le
    \(\frac{r_1+3r_2+3k+3}
           {r_1+r_2+k+1}\)^2
    \sum_{j=0}^{\infty} \frac{1}{(j+1)^2}
  \\
   &
  \le \frac{3\pi^2}{2}
\end{align*}
and
\begin{align*}
  &\sum_{j=j_0+1}^{k} \(\frac{r_1+r_2+k+j}{(r_1+j+1)(r_2+k-j+1)}\)^2
  \\
  &\le \(\frac{r_1+r_2+k+j_0+2}{r_1+j_0+2}\)^2
   \sum_{j=j_0+1}^{k} \frac{1}{(r_2+k-j+1)^2}
   \le \(\frac{r_1+3r_2+3k+3}{r_1+r_2+k+3}\)^2
   \sum_{l=0}^{\infty} \frac{1}{(l+1)^2}
  \\
   &
  \le \frac{3\pi^2}{2} 
\end{align*}
for any $k\in \N$.

\begin{Lemm}\label{Lemm_sqrt_f}
Let $\{M_k\}$ satisfy (\ref{lc}) and $f\in C^\infty([0,\infty))$ satisfy 
$f(t)\ge 1$. 
If there exists a positive constant $\nu$ such that 
\begin{equation*}
  \left|f^{(k)}(t)\right|
  \le \nu^k M_k
\end{equation*}
for any $k\in \N$, 
then the estimates 
\begin{equation}\label{Lemm_sqrt_f-e2}
  \left|\frac{d^k}{dt^k}\sqrt{f(t)}\right|
  \le \mu^k M_k
\end{equation}
are valid for any $k\in \N$ with $\mu=16\pi^2\nu/9$. 
\end{Lemm}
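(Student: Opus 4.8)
The plan is to set $g(t):=\sqrt{f(t)}$, so that $g\ge 1$ on $[0,\infty)$, and to bootstrap the derivative bounds on $g$ from those on $f$ through the algebraic identity $g^{2}=f$. Differentiating this identity $k$ times by the Leibniz rule gives $2g\,g^{(k)}=f^{(k)}-\sum_{j=1}^{k-1}\binom{k}{j}g^{(j)}g^{(k-j)}$ for every $k\ge1$, whence, dividing by $2g$ and using $g\ge1$,
\[
  \left|g^{(k)}(t)\right|\le \tfrac12\left|f^{(k)}(t)\right|
  +\tfrac12\sum_{j=1}^{k-1}\binom{k}{j}\left|g^{(j)}(t)\right|\left|g^{(k-j)}(t)\right|.
\]
Notice that the right-hand side involves $f^{(k)}$ (controlled by hypothesis) and only \emph{lower}-order derivatives of $g$, so that, beyond $g\ge1$, no a priori bound on $g$ itself is needed; in particular no upper bound on $f$ enters.

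The core of the argument is then an induction on $k$ establishing the weighted estimate $|g^{(k)}(t)|\le K\,\mu^{k}M_{k}/(k+1)^{2}$ for all $k\ge1$, with a suitably small absolute constant $K\in(0,1)$ and $\mu=16\pi^{2}\nu/9$; since $K\le1\le(k+1)^{2}$ this in particular yields $|g^{(k)}|\le\mu^{k}M_{k}$, which is (\ref{Lemm_sqrt_f-e2}). The base case $k=1$ is the direct estimate $|g'|=|f'|/(2g)\le\frac12\nu M_{1}$, which fits the hypothesis once $K\mu\ge2\nu$ (recall $M_{1}=1$). For the inductive step one inserts the hypothesis for indices $1,\dots,k-1$ into the displayed inequality and uses two ingredients: the log-convexity bound $\binom{k}{j}M_{j}M_{k-j}\le M_{k}$, which is exactly (\ref{lc-binomial-e1}) with $q=r=0$; and the elementary series bound $\sum_{j=1}^{k-1}(k+1)^{2}\big((j+1)^{2}(k-j+1)^{2}\big)^{-1}\le C$ for an absolute constant $C$ of order $\pi^{2}$, obtained by the same splitting-at-the-midpoint estimate that proves (\ref{sum_k^-2}). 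Together these give $|g^{(k)}|\le\frac12\nu^{k}M_{k}+\frac{CK^{2}}{2}\,\mu^{k}M_{k}/(k+1)^{2}$, and one checks this is $\le K\mu^{k}M_{k}/(k+1)^{2}$ provided $K$ is chosen so that $\frac{CK}{2}<1$ and $\mu/\nu$ is large enough that $\frac12\nu^{k}(k+1)^{2}\le K\mu^{k}\bigl(1-\frac{CK}{2}\bigr)$ for every $k\ge1$; the worst $k$ here is $k=1$ (where the polynomial weight is largest relative to $(\mu/\nu)^{k}$), and optimizing in $K$ at that value reproduces exactly $\mu=16\pi^{2}\nu/9$.

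The main obstacle is the self-referential nature of the inductive inequality: the convolution term $\frac12\sum\binom{k}{j}|g^{(j)}||g^{(k-j)}|$ already carries the full geometric factor $\mu^{k}$, so $K$ must be small enough that $CK$ stays below a threshold, while at the same time $K\mu$ must be large enough to absorb the inhomogeneous term $\frac12|f^{(k)}|\le\frac12\nu^{k}M_{k}$ at every order. Reconciling these two competing demands is precisely what pins down the numerical constant $16\pi^{2}/9$; apart from this bookkeeping of constants, everything rests on the same Leibniz-rule-plus-log-convexity mechanism already used in the proof of Lemma~\ref{symbol_b1-th1} to control the reciprocal $1/a$, so no genuinely new estimate is required.
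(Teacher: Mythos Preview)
Your proposal is correct and follows essentially the same route as the paper: write $g=\sqrt{f}$, differentiate $g^2=f$ by Leibniz, and prove by induction the weighted bound $|g^{(k)}|\le K\,\mu^{k}M_{k}/(k+1)^{2}$ using (\ref{lc-binomial-e1}) with $q=r=0$ together with a uniform bound on $\sum_{j=1}^{k-1}(k+1)^{2}/\bigl((j+1)^{2}(k-j+1)^{2}\bigr)$.

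The only cosmetic difference is that the paper first rewrites the hypothesis on $f$ in the same weighted form as the induction hypothesis, namely $|f^{(k)}|\le \ve\,\mu^{k}M_{k}/(k+1)^{2}$, by using the elementary fact $(k+1)^{2}\le 4^{k}$ for $k\ge1$ and choosing $\mu=4\nu/\ve$; this makes the inductive step a one-line comparison $\tfrac{\ve}{2}(1+\ve S)\le\ve$ rather than your two-parameter balancing of $K$ against $\mu/\nu$. Your claim that optimizing in $K$ ``reproduces exactly $\mu=16\pi^{2}\nu/9$'' is the one loose point: the splitting-at-the-midpoint argument does not by itself pin down the constant $C$ to the value $4\pi^{2}/9$ that your formula $\mu=4C\nu$ would require, so the specific numerical value should be read as coming from the paper's particular choice of $\ve$ rather than from a sharp optimization. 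This has no bearing on the correctness of the scheme, since only $\mu=O(\nu)$ is needed downstream.
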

\begin{proof}
Denoting $\ve=3/(4\pi^2)(<1)$ and setting $\mu=4\nu/\ve$, we have 
\begin{align*}
  \left|f^{(k)}(t)\right|
  \le \ve\frac{M_k \mu^k}{(k+1)^2} 
     \frac{(k+1)^2}{4^k}
  \le \ve\frac{M_k \mu^k}{(k+1)^2}
\end{align*}
for any $k\in \N$. 
We shall show that the estimate 
\begin{equation}\label{Lemm_sqrt_f-e22}
  \left|\frac{d^k}{dt^k}\sqrt{f(t)}\right|
  \le \ve \frac{M_k \mu^k}{(k+1)^2}
\end{equation}
is valid for any $k\ge 1$ by induction. 
Indeed, we immediately have (\ref{Lemm_sqrt_f-e2}) from (\ref{Lemm_sqrt_f-e22}). 
For $k=1$ (\ref{Lemm_sqrt_f-e22}) is valid as follows: 
\begin{align*}
  \left|\frac{d}{dt}\sqrt{f(t)}\right|
 =\frac{|f'(t)|}{2\sqrt{f(t)}}
 \le \frac{\ve}{2} \frac{M_1 \mu}{2^2}. 
\end{align*}
Let us suppose that (\ref{Lemm_sqrt_f-e22}) is valid for $k=1,\ldots,j$. 
Denoting $g(t)=\sqrt{f(t)}$, we have 
\begin{equation*}
  g^{(j+1)}(t)
 =\frac{1}{2g(t)}f^{(j+1)}(t)
 -\frac{1}{2g(t)}\sum_{k=1}^{j}\binom{j+1}{k}g^{(k)}(t)g^{(j+1-k)}(t). 
\end{equation*}
Therefore, by using Lemma \ref{lc-binomial} and (\ref{sum_k^-2}), we obtain 
\begin{align*}
  \left|g^{(j+1)}(t)\right|
  \le&
    \frac{\ve}{2} \frac{M_{j+1} \mu^{j+1}}{(j+2)^2}
   +\frac{\ve^2}{2}\frac{M_{j+1}\mu^{j+1}}{(j+2)^2}
    \sum_{k=1}^{j}\binom{j+1}{k}
    \frac{M_k M_{j+1-k}}{M_{j+1}}
    \frac{(j+2)^2}{(k+1)^2(j-k+2)^2}
\\
  \le&
    \ve \frac{M_{j+1} \mu^{j+1}}{(j+2)^2},
\end{align*}
Thus (\ref{Lemm_sqrt_f-e22}) is valid for any $k\ge 1$. 
\end{proof}

\section*{Acknowledgment}
The author is supported by JSPS Grant-in-Aid for Scientific Research (C) 26400170.

\end{document}